\newcommand\RE{\mathbb{R}}
\newcommand\R{\mathbb{R}}
\newcommand\ZA{\mathbb{Z}}
\newcommand\Z{\mathbb{Z}}
\newcommand\ZZ{\mathbb{Z}}
\newcommand\QQ{\mathbb{Q}}
\newcommand\C{\mathbb{C}}
\newcommand{\beq}{\begin{equation}}
\newcommand{\eeq}{\end{equation}}
\definecolor{darkgreen}{cmyk}{1,0,1,.2}
\definecolor{m}{rgb}{1,0.1,1}
\definecolor{green}{cmyk}{1,0,1,0}
\definecolor{test}{rgb}{1,0,0}
\definecolor{cmyk}{cmyk}{0,1,1,0}
\newcommand\Tr{\operatorname{Tr}}
\newcommand\Ind{\operatorname{Ind}}
\newcommand\rank{\operatorname{rank}}
\newcommand\tr{\operatorname{tr}}
\newcommand\ch{\operatorname{ch}}
\newcommand\fR{\mathfrak R}
\newcommand\fM{\mathfrak M}
\newcommand\cP{\mathcal P}
\newcommand\ep{\mathrm{ep}}
\newcommand\eps{\mathrm{ep,spin}}
\newcommand\spin{\mathrm{spin}}
\theoremstyle{plain}
\newtheorem{theorem}{Theorem}[section]
\newtheorem{lemma}[theorem]{Lemma}
\newtheorem{proposition}[theorem]{Proposition}
\newtheorem{corollary}[theorem]{Corollary}
\theoremstyle{definition}
\newtheorem{definition}[theorem]{Definition}
\newtheorem{definition*}{Definition}
\newtheorem{example}[theorem]{Example}
\theoremstyle{remark}
\newtheorem{remark}[theorem]{Remark}
\newtheorem{remarks}[theorem]{Remarks}
\newtheorem{remarks*}{Remarks}
\begin{document}

\title[Positive scalar curvature via end-periodic manifolds ]
{Positive scalar curvature metrics\\ via end-periodic manifolds}

\author{Michael Hallam}
\address{Mathematical Institute, 
Andrew Wiles Building, University of Oxford,
Oxford OX2 6GG, UK}
\email{michael.hallam@maths.ox.ac.uk}

\author{Varghese Mathai}
\address{Department of Mathematics, University of Adelaide,
Adelaide 5005, Australia}
\email{mathai.varghese@adelaide.edu.au}

\begin{abstract}
We obtain two types of results on positive scalar curvature metrics for compact spin manifolds that are even dimensional.
The first type of result are obstructions to the existence of positive scalar curvature metrics on such manifolds, expressed in terms 
of end-periodic eta invariants that were defined by Mrowka-Ruberman-Saveliev \cite{MRS}.  These results are the even dimensional
analogs of the results by Higson-Roe \cite{HigsonRoe}. The second type of result studies the number of path components
of the space of positive scalar curvature metrics modulo diffeomorphism for compact spin manifolds that are even dimensional, whenever this space
is non-empty. These extend and refine certain results in Botvinnik-Gilkey \cite{BG} and also \cite{MRS}. 
End-periodic analogs of \(K\)-homology and bordism theory are defined and are utilised to prove many of our results.
 \end{abstract}

\keywords{positive scalar curvature metrics, maximal Baum-Connes conjecture, 
end-periodic manifolds, end-periodic eta invariant, vanishing theorems, end-periodic \(K\)-homology, end-periodic bordism}

\subjclass[2010]{Primary 58J28; Secondary 19K33, 19K56, 53C21.}

\date{}
\maketitle

\tableofcontents

\section{Introduction}

Eta invariants were originally introduced by Atiyah, Patodi and Singer \cite{APS1, APS2, APS3} as a correction term appearing in an index theorem for manifolds with odd-dimensional boundary. The eta invariant itself is a rather sensitive object, being defined in terms of the spectrum of a Dirac operator. However, when one considers the \emph{relative} eta invariant (or \emph{rho invariant}), defined by twisting the Dirac operator by a pair of flat vector bundles and subtracting the resulting eta invariants, many marvellous invariance properties emerge. For example, Atiyah, Patodi and Singer showed that the mod \(\Z\) reduction of the relative eta invariant of the signature operator is in fact independent of the choice of Riemannian metric on the manifold. Key to the approach is their index theorem for even dimensional manifolds with global 
boundary conditions, which they show is equivalent to studying manifolds with cylindrical ends and imposing (weighted) $L^2$ decay conditions.

The links between eta invariants and metrics of positive scalar curvature metrics
have been studied using different approaches by Mathai \cite{M92,M92-2}, Keswani \cite{Keswani99} and Weinberger \cite{Weinberger}. 
A conceptual proof of the approach by Keswani, was achieved in the paper by Higson-Roe \cite{HigsonRoe} using 
\(K\)-homology; see also the recent papers by Deeley-Goffeng \cite{DG}, Benameur-Mathai \cite{BM, BM-JGP, BM-JNCG}
and Piazza-Schick \cite{PiazzaSchick07,PiazzaSchick07-2}.

Our goal in this paper to use the results of 
Mrowka-Ruberman-Saveliev \cite{MRS} instead of those by Atiyah-Patodi-Singer \cite{APS1}. Manifolds with cylindrical ends studied in \cite{APS1} are special cases of 
end-periodic manifolds studied in \cite{MRS}. More precisely, let $Z$ be a compact manifold with boundary $Y$
and suppose that $Y$ is a connected submanifold of a compact oriented manifold $X$ that is Poincar\'e dual to a primitive cohomology class $\gamma \in H^1(X, \ZA)$.
Let $W$ be the fundamental segment obtained by cutting $X$ open along $Y$ (Figure \ref{fig:epc}).

\begin{figure}[h]
\includegraphics[height=1in]{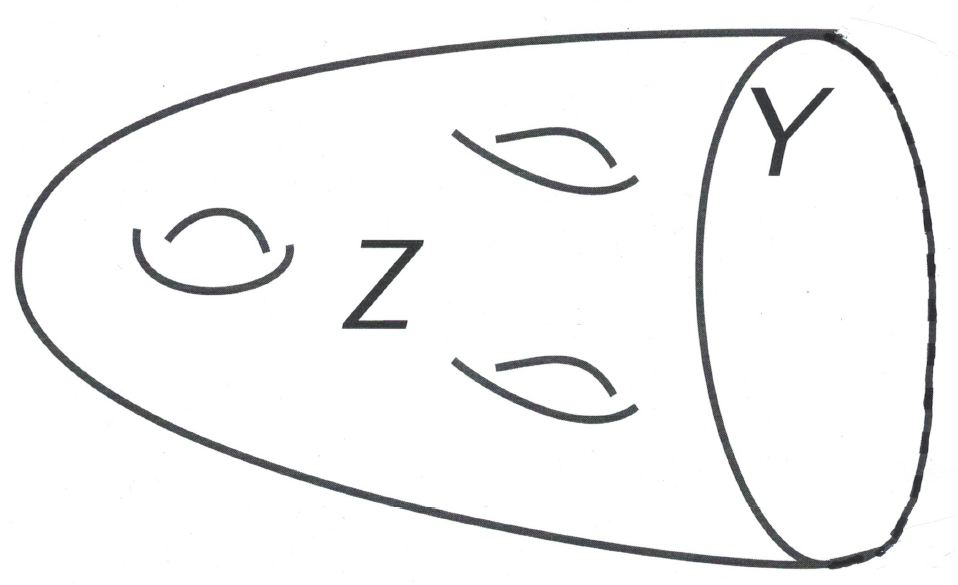}\qquad\qquad
\includegraphics[height=1in]{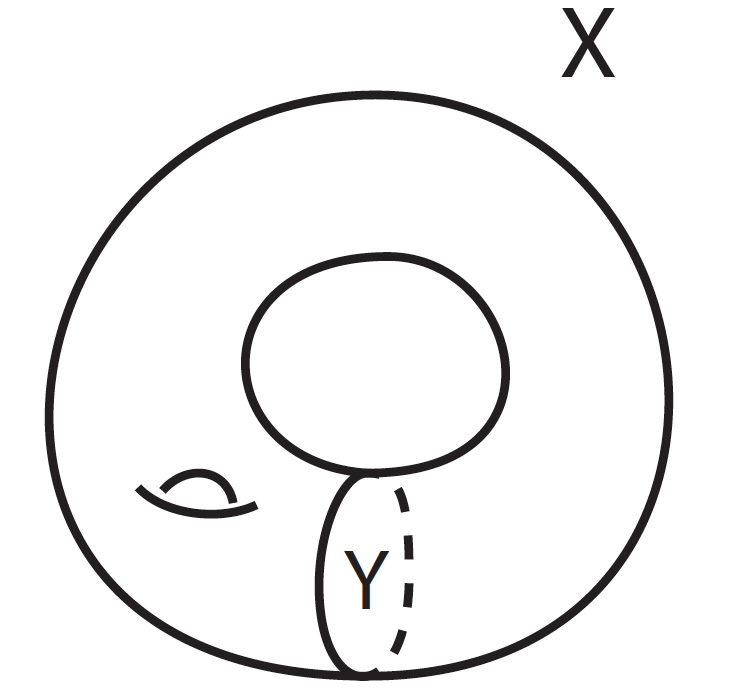}\qquad \qquad
\includegraphics[height=1in]{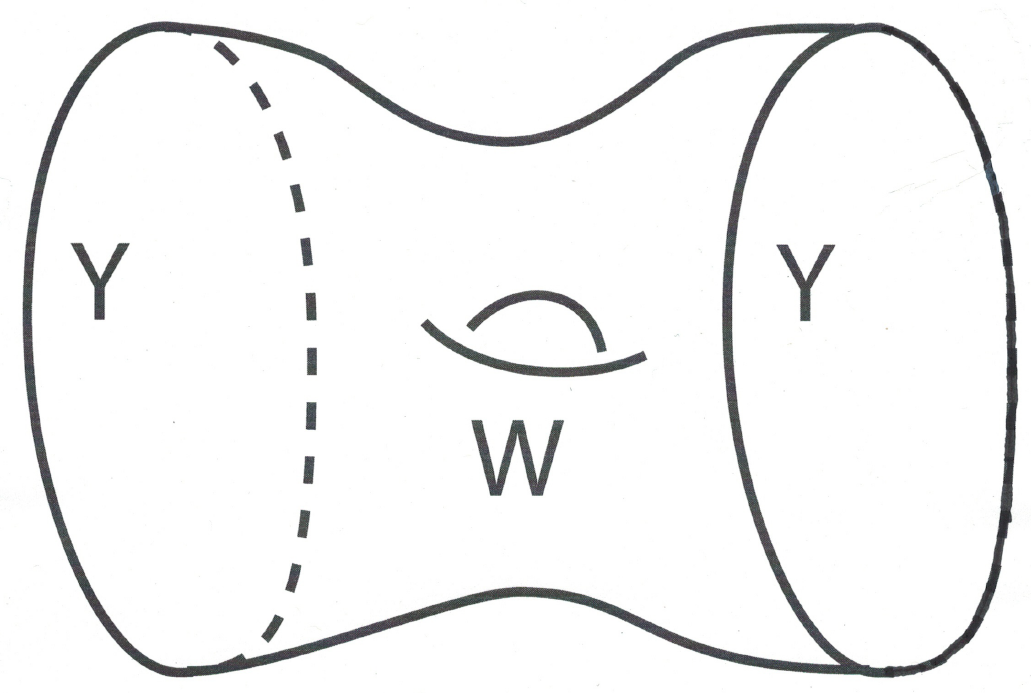}
\caption{Pieces of an end-periodic manifold}
\label{fig:epc}
\end{figure}

\noindent
If $W_k$ are isometric copies of $W$, then we can attach $X_1=\bigcup_{k\ge 0} W_k$
to the boundary component $Y$ of $Z$, forming the \emph{end-periodic manifold $Z_\infty$} (Figure \ref{fig:ep}). Often in the paper, we also deal with manifolds 
with more
than one periodic end.

\begin{figure}[h]
\includegraphics[height=2in]{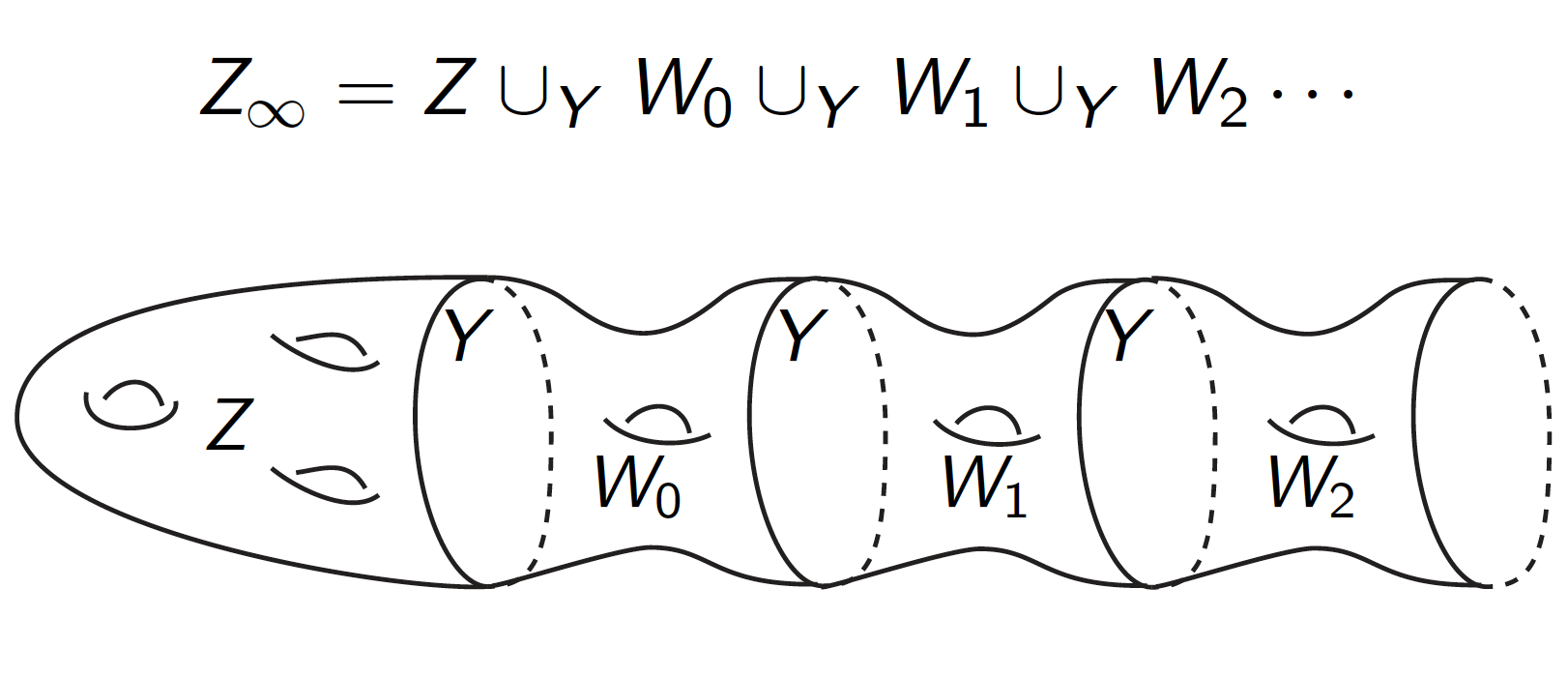}\\
\caption{End-periodic manifold}
\label{fig:ep}
\end{figure}

The motivations for considering such manifolds are from gauge theory;  it was Taubes \cite{Taubes} who originally developed the analysis of end-periodic elliptic operators on end-periodic manifolds, and successfully calculated the index of the end-periodic anti-self dual operator in Yang-Mills theory.

We adapt the results by Higson-Roe \cite{HigsonRoe}, using end-periodic \(K\)-homology,
 to obtain obstructions to the existence of positive scalar curvature 
metrics in terms of end-periodic eta invariants (see section \ref{end-per-rho}) that were defined by Mrowka-Ruberman-Saveliev \cite{MRS} for even dimensional manifolds, using the b-trace approach of Melrose \cite{Melrose}. These obstructions are for the compact manifold \(X\), and not the end-periodic manifold \(Z_\infty\); the end-periodic manifold is only a tool used to obtain the obstructions. This is established in section \ref{PSC}.
Roughly speaking, end-periodic \(K\)-homology is an analog of geometric \(K\)-homology, where the 
representatives have in addition, a choice of degree 1 cohomology class determining the 
codimension 1 submanifold. It is defined and studied in Section \ref{end-per-K}.

We also adapt the results by Botvinnik-Gilkey \cite{BG}, using end-periodic bordism, to obtain 
results on the number of components of the moduli space of Riemannian metrics of positive 
 scalar curvature 
metrics in terms of end-periodic eta invariants. Such results have been obtained by
Mrowka-Ruberman-Saveliev \cite{MRS}, and the introduction of end-periodic bordism provides a conceptualisation of their approach. Again, the information on path components is for the compact manifold \(X\), and the end-periodic manifold is but a means to obtaining this information.
End-periodic bordism is defined and studied in Section \ref{end-per-B}.

In Section \ref{structure} we define the end-periodic analogues of the structure groups of Higson and Roe, and study the end-periodic rho invariant on these groups. 

Section \ref{PSC} contains the applications to positive scalar curvature, using the established end-periodic \(K\)-theory and end-periodic spin bordism of the previous sections.

In Section \ref{rep-var} we give a  proof of the vanishing of the end-periodic rho invariant of the twisted Dirac operator with coefficients in a flat Hermitian vector bundle on a compact even dimensional Riemannian spin manifold $X$ of positive scalar curvature using the representation variety of 
$ \pi_1(X)$. 

It seems to be a general theme that for any geometrically defined homology theory, there is an analogous theory tailored to the setting of end-periodic manifolds, and that this end-periodic theory is isomorphic to the original geometric theory in a natural way. These isomorphisms are built on the foundation of Poincar{\'e} duality. \\

\noindent{\em {Acknowledgements.}} 
M.H. acknowledges an M.Phil. scholarship funding by the University of Adelaide.
V.M. acknowledges funding by the Australian Research Council, through Discovery Project
DP170101054. Both authors thank Jonathan Rosenberg for the explanation of the concordance is isotopy conjecture for PSC metrics and the relevance of the minimal hypersurfaces result of Schoen-Yau \cite{SY} to section 6.2, 
and Nikolai Saveliev for helpful feedback.

\section{End-Periodic \(K\)-homology}\label{end-per-K}
\subsection{Review of \(K\)-homology}

We begin by reviewing the definition of \(K\)-homology of Baum and Douglas \cite{BaumDouglas}, using the \((M,S,f)\)-formulation introduced by Keswani \cite{Keswani99}, and used by Higson and Roe \cite{HigsonRoe}. \begin{definition} A \emph{\(K\)-cycle} for a discrete group \(\pi\) is a triple \((M,S,f)\), where \(M\) is a compact oriented odd-dimensional Riemannian manifold, \(S\) is a smooth Hermitian bundle over \(M\) with Clifford multiplication \(c:TM\to\text{End}(S)\), and \(f:M\to B\pi\) is a continuous map to the classifying space of \(\pi\). \end{definition} Such a bundle \(S\) with the above data is called a \emph{Dirac bundle}. We remark that \(M\) may be disconnected, and that its connected components are permitted to have different odd dimensions.

\begin{definition} Two \(K\)-cycles \((M,S,f)\) and \((M',S',f')\) for \(B\pi\) are said to be \emph{isomorphic} if there is an orientation preserving diffeomorphism \(\varphi:M\to M'\) covered by an isometric bundle isomorphism \(\psi:S\to S'\) such that \[\psi\circ c_M(v)=c_{M'}(\varphi_*v)\circ \psi\] for all \(v\in TM\), and such that \(f'\circ\varphi=f\). \end{definition}

A \emph{Dirac operator} for the cycle \((M,S,f)\) is any first order linear partial differential operator \(D\) acting on smooth sections of \(S\) whose principal symbol is the Clifford multiplication. That is to say, for any smooth function \(\phi:M\to\mathbb{R}\) one has \[[D,\phi]=c(\text{grad}\,\phi):\Gamma(S)\to\Gamma(S).\]

The \(K\)-homology group \(K_1(B\pi)\) will consist of geometric \(K\)-cycles for \(\pi\) modulo an equivalence relation, which we will now describe.

\begin{definition}\label{boundary} A \(K\)-cycle \((M,S,f)\) is a \emph{boundary} if there exists a compact oriented even-dimensional manifold \(W\) with boundary \(\partial W=M\) such that:\vspace{2mm} \begin{enumerate}[(a)] \setlength\itemsep{2mm}
\item \(W\) is isometric to the Riemannian product \((0,1]\times M\) near the boundary.
\item There is a \(\mathbb{Z}_2\)-graded Dirac bundle over \(W\) that is isomorphic to \(S\oplus S\) in the collar with Clifford multiplication given by \[c_W(v)=\left(\begin{array}{cc}
0 & c_M(v) \\
c_M(v) & 0
\end{array} \right),\quad\quad c_W(\partial_t)=\left(\begin{array}{cc}
0 & -I \\
I & 0
\end{array}\right)\] for \(v\in TM\).
\item The map \(f:M\to B\pi\) extends to a continuous map \(f:W\to B\pi\).
\end{enumerate}\end{definition} 

\begin{remark}\label{orientation}
Our orientation convention for boundaries is the following: If \(W\) is an oriented manifold with boundary \(\partial W\) then the orientation on \(W\) at the boundary is given by the outward unit normal followed by the orientation of \(\partial W\). The isometry in part (a) is required to be orientation preserving.
\end{remark}

 We define the \emph{negative} of a \(K\)-cycle \((M,S,f)\) to be \((-M,-S,f)\), where \(-M\) is \(M\) with its orientation reversed, and \(-S\) is \(S\) with the negative Clifford multiplication \(c_{-S}=-c_S\). Two \(K\)-cycles \((M,S,f)\) and \((M',S',f')\) are \emph{bordant} if the disjoint union \((M,S,f)\amalg(-M',-S',f')\) is a boundary, and we write \((M,S,f)\sim(M',S',f')\). This is the first of the relations defining \(K\)-homology; there are two more to define:\vspace{2mm} \begin{enumerate}[(1)]
  \setlength\itemsep{2mm}

\item \emph{Direct sum/disjoint union}: \[(M,S_1,f)\amalg(M,S_2,f)\sim(M,S_1\oplus S_2,f).\]

\item \emph{Bundle modification}: Let \((M,S,f)\) be a \(K\)-cycle. If \(P\) is a principal \(SO(2k)\)-bundle over \(M\), we define \[\hat{M}=P\times_{\rho} S^{2k}.\] Here \(\rho\) denotes the action of \(SO(2k)\) on \(S^{2k}\) given by the standard embedding of \(SO(2k)\) into \(SO(2k+1)\). The metric on \(\hat{M}\) is any metric agreeing with that of \(M\) on horizontal tangent vectors and with that of \(S^{2k}\) on vertical tangent vectors. The map \(\hat{f}:\hat{M}\to B\pi\) is the composition of the projection \(\hat{M}\to M\) and \(f:M\to B\pi\). Over \(S^{2k}\) is an \(SO(2k)\)-equivariant vector bundle \(C\ell_\theta(S^{2k})\subset C\ell(TS^{2k})\), defined as the \(+1\) eigenspace of the \emph{right} action by the oriented volume element \(\theta\) on the Clifford bundle \(C\ell(TS^{2k})\). The \(SO(2k)\)-equivariance of this bundle implies that it lifts to a well defined bundle over \(\hat{M}\). We thus define the bundle \[\hat{S}=S\otimes C\ell_\theta(S^{2k})\] over \(\hat{M}\). Clifford multiplication on \(\hat{S}\) is given by \[ c(v)=\begin{cases}
c_M(v)\otimes \epsilon & \text{if } v \text{ is horizontal}, \\
I\otimes c_{S^{2k}}(v) & \text{if } v \text{ is vertical},
\end{cases} \] where \(\epsilon\) is the grading element of the Clifford bundle over \(S^{2k}\). The \(K\)-cycle \((\hat{M},\hat{S},\hat{f})\) is called an \emph{elementary bundle modification} of \((M,S,f)\), and we write \((M,S,f)\sim(\hat{M},\hat{S},\hat{f})\). We remark also that individual bundle modifications are allowed to be made on connected components of \(M\).
\end{enumerate}

\begin{remark}\label{rem:tensorDirac}
If \(D\) is a given Dirac operator for the cycle \((M,S,f)\), then there is a preferred choice of Dirac operator for an elementary bundle modification \((\hat{M},\hat{S},\hat{f})\) of \((M,S,f)\). If \(D_\theta\) denotes the \(SO(2k)\)-equivariant Dirac operator acting on \(C\ell(S^{2k})\), then the Dirac operator on \(S\otimes C\ell_\theta(S^{2k})\) is \[\hat{D}=D\otimes\epsilon+I\otimes D_\theta\] where \(\epsilon\) is the grading element of \(C\ell_\theta(S^{2k})\).
\end{remark}

\begin{definition}The \emph{\(K\)-homology group} \(K_1(B\pi)\) is the abelian group of \(K\)-cycles modulo the equivalence relation generated by isomorphism of cycles, bordism, direct sum/disjoint union, and bundle modification. The addition of equivalence classes of \(K\)-cycles is given by disjoint union \[(M,S,f)\amalg(M',S',f')=(M\amalg M',S\amalg S',f\amalg f').\] \end{definition} One must of course check that this operation descends to a well-defined binary operation on \(K\)-homology which satisfies the group axioms. The details are straightforward.

\begin{remark}
There is another group \(K_0(B\pi)\) defined in terms of even-dimensional cycles, which is well suited to the original Atiyah-Singer index theorem. We will not need it here.\\
\end{remark}

\subsection{Definition of End-Periodic \(K\)-homology}

With the above definition of \(K\)-homology reviewed, we now adapt the definition to the setting of manifolds with periodic ends. 

\begin{definition} An \emph{end-periodic \(K\)-cycle}, or simply a \emph{\(K^\ep\)-cycle} for a discrete group \(\pi\) is a quadruple \((X,S,\gamma,f)\), where \(X\) is a compact oriented even-dimensional Riemannian manifold, \(S=S^+\oplus S^-\) is a \(\mathbb{Z}_2\)-graded Dirac bundle over \(X\), \(\gamma\in H^1(X,\mathbb{Z})\) is a cohomology class whose restriction to each connected component of \(X\) is primitive, and \(f\) is a continuous map \(f:X\to B\pi\). \end{definition} 

The \(\mathbb{Z}_2\)-graded structure of \(S\) includes a Clifford multiplication by tangent vectors to \(X\) which swaps the positive and negative sub-bundles. Again, the manifold \(X\) is allowed to be disconnected, with the connected components possibly having different even dimensions. Note that the definition of a \(K^\ep\)-cycle imposes topological restrictions on \(X\), namely each connected component of \(X\) must have non-trivial first cohomology in order for the class \(\gamma\) to be primitive on each component.

\begin{definition}\label{ep-isom}
Two \(K^\ep\)-cycles \((X,S,\gamma,f)\) and \((X',S',\gamma',f')\) are \emph{isomorphic} if there exists an orientation preserving diffeomorphism \(\varphi:X\to X'\) which is covered by a \(\mathbb{Z}_2\)-graded isometric bundle isomorphism \(\psi:S\to S'\) such that \[\psi\circ c_X(v)=c_{X'}(\varphi_*v)\circ\psi\] for all \(v\in TX\). The diffeomorphism \(\varphi\) must additionally satisfy \(\varphi^*(\gamma')=\gamma\), and \(f'\circ\varphi=f\).
\end{definition}

We now define what it means for a \(K^\ep\)-cycle \((X,S,\gamma,f)\) to be a boundary. First, let \(Y\subset X\) be a connected codimension-1 submanifold that is Poincar{\'e} dual to \(\gamma\). The orientation of \(Y\) is such that for all closed forms \(\alpha\) of codimension 1 (over each component of \(X\)), \[\int_Y\iota^*(\alpha)=\int_X\gamma\wedge\alpha,\] where \(\iota:Y\to X\) is the inclusion and we abuse notation by writing \(\gamma\) for what is really a closed \(1\)-form representing the cohomology class \(\gamma\). In other words, the orientation of \(Y\) is such that the signs of the above two integrals always agree. Now, cut \(X\) open along \(Y\) to obtain a compact manifold \(W\) with boundary \(\partial W= Y\amalg-Y\), with our boundary orientation conventions as in Remark \ref{orientation}. Glue infinitely many isometric copies \(W_k\) of \(W\) end to end along \(Y\) to obtain the complete oriented Riemannian manifold \(X_1=\bigcup_{k\geq0}W_k\) with boundary \(\partial X_1=-Y\). Pull back the Dirac bundle \(S\) on \(X\) to get a \(\mathbb{Z}_2\)-graded Dirac bundle on \(X_1\), also denoted \(S\), and pull back the map \(f\) to get a map \(f:X_1\to B\pi\). 

\begin{definition}\label{ep-boundary} The \(K^\ep\)-cycle \((X,S,\gamma,g)\) is a \emph{boundary} if there exists a compact oriented Riemannian manifold \(Z\) with boundary \(\partial Z=Y\), which can be attached to \(X_1\) along \(Y\) to form a complete oriented Riemannian manifold \(Z_\infty=Z\cup_YX_1\), such that the bundle \(S\) extends to a \(\mathbb{Z}_2\)-graded Dirac bundle on \(Z_\infty\) and the map \(f\) extends to a continuous map \(f:Z_\infty\to B\pi\).  \end{definition}

\begin{remark}
Being a boundary is clearly independent of the choice of \(Y\); if \(Y'\) is another choice of submanifold Poincar{\'e} dual to \(\gamma\) we simply embed \(Y'\) somewhere in the periodic end of \(Z_\infty\), and take \(Z'\) to be the compact piece in \(Z_\infty\) bounded by \(Y'\).
\end{remark}

\begin{definition} The manifold \(Z_\infty\) from Definition \ref{ep-boundary} is called an \emph{end-periodic} manifold. It is convenient to say the end is \emph{modelled} on \((X,\gamma)\), or sometimes just \(X\) if \(\gamma\) is understood. Any object on \(Z_\infty\) whose restriction to the periodic end \(X_1\) is the pullback of an object from \(X\) is called \emph{end-periodic}. For example, the bundle \(S\), the map \(f\), and the metric on \(Z_\infty\) in the previous definition are all end-periodic. \end{definition}

\begin{remark}
We allow end-periodic manifolds to have multiple ends. This situation arises when the manifold \(X\), on which the end of \(Z_\infty\) is modelled, is disconnected.
\end{remark}

The \emph{negative} of a \(K^\ep\)-cycle \((X,S,\gamma,f)\) is simply \((X,S,-\gamma,f)\). This is so that the disjoint union of a \(K^\ep\)-cycle with its negative is a boundary---it is clear that the \(\mathbb{Z}\)-cover \(\tilde{X}\) of \(X\) corresponding to \(\gamma\) is an end-periodic manifold with end modelled on \((X\amalg X,\gamma\amalg-\gamma)\). The definitions of bordism and direct sum/disjoint union are exactly the same as before, with the class \(\gamma\) left unchanged. In the case of bundle modification, the class \(\hat{\gamma}\) on \(\hat{X}=P\times_\rho S^{2k}\) is the pullback of \(\gamma\) by the projection \(p:\hat{X}\to X\), and we endow the tensor product bundle \(S\otimes C\ell_\theta(S^{2k})\) with the standard tensor product grading of \(\Z_2\)-graded modules. There is also one more relation we define which relates the orientation on \(X\) to the one-form \(\gamma\): \[(X,S,-\gamma,f)\sim(-X,\Pi(S),\gamma,f)\] where \(-X\) is \(X\) with the reversed orientation and \(\Pi(S)\) is \(S\) with its \(\mathbb{Z}_2\)-grading reversed. We call this relation \emph{orientation/sign}, as it links the orientation on \(X\) to the sign of \(\gamma\). The need for this relation will become apparent in (2) of the proof of Lemma \ref{map1}.

\begin{definition} The \emph{end-periodic \(K\)-homology group}, \(K_1^\text{ep}(B\pi)\), is the abelian group consisting of \(K^\ep\)-cycles up to the equivalence relation generated by isomorphism of \(K^\ep\)-cycles, bordism, direct sum/disjoint union, bundle modification, and orientation/sign. Addition is given by disjoint union of cycles \[(X,S,\gamma,f)\amalg(X',S',\gamma',f')=(X\amalg X',S\amalg S',\gamma\amalg\gamma',f\amalg f').\]\end{definition}

\begin{remark}
As for \(K\)-homology we could also define the group \(K_0^\ep(B\pi)\) using odd-dimensional \(K^\ep\)-cycles, although we will not pursue this here.\\
\end{remark}

\subsection{The isomorphism}

We will now show that there is a natural isomorphism \(K_1(B\pi)\cong K_1^\ep(B\pi)\).

First we describe the map  \(K_1(B\pi)\to K_1^\ep(B\pi)\). Let \((M,S,f)\) be a \(K\)-cycle for \(B\pi\). Define \(X=S^1\times M\) an even dimensional manifold with the product orientation and Riemannian metric, the Dirac bundle \(S\oplus S\to X\) with Clifford multiplication as in (b) of Definition \ref{boundary}, \(\gamma=d\theta\in H^1(X,\mathbb{Z})\) the standard generator of the first cohomology of \(S^1\), and \(f:X\to B\pi\) the extension of \(f:M\to B\pi\). We map the equivalence class of \((M,S,f)\) in \(K_1(B\pi)\) to the equivalence class of \((S^1\times M,S\oplus S,d\theta,f)\) in \(K_1^\ep(\pi)\). \begin{lemma}\label{map1} The map sending a cycle \((M,S,f)\) to the end-periodic cycle \((S^1\times M,S\oplus S,d\theta,f)\) descends to a well-defined map of \(K\)-homologies.\end{lemma}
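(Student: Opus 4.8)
The plan is to check that the assignment $\Phi\colon(M,S,f)\mapsto(S^1\times M,\,S\oplus S,\,d\theta,\,f)$ carries each of the four relations generating the equivalence relation on $K$-cycles --- isomorphism, bordism, direct sum/disjoint union, and bundle modification --- to a relation that holds in $K_1^{\ep}(B\pi)$, from which well-definedness of the induced map follows. Throughout I write $\sigma(S)=S\oplus S$ for the suspended $\mathbb{Z}_2$-graded Dirac bundle over $S^1\times M$ whose Clifford multiplication is the one in (b) of Definition~\ref{boundary}, so that $\Phi(M,S,f)=(S^1\times M,\sigma(S),d\theta,f)$; one checks first that this is a genuine $K^{\ep}$-cycle, the pulled-back class $d\theta$ being primitive on each component $S^1\times M_i$.

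\emph{Isomorphism and direct sum.} If $\varphi\colon M\to M'$ is covered by an isometry $\psi\colon S\to S'$ as in the definition of isomorphic $K$-cycles, then $\mathrm{id}_{S^1}\times\varphi$ is orientation preserving, is covered by the $\mathbb{Z}_2$-graded isometry $\psi\oplus\psi$ --- which intertwines the suspended Clifford multiplications since the block formulas involve only $c_M$, $c_{M'}$ and the identity --- fixes $d\theta$, and is compatible with the maps to $B\pi$; hence $\Phi$ respects isomorphism. For the direct sum relation, the evident reordering of summands furnishes an isomorphism $\sigma(S_1)\oplus\sigma(S_2)\cong\sigma(S_1\oplus S_2)$, and combining it with the direct-sum/disjoint-union relation available in $K_1^{\ep}(B\pi)$ shows $\Phi$ respects that relation too.

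\emph{Bordism.} Suppose $(M,S,f)\sim(M',S',f')$, i.e.\ $N:=(M,S,f)\amalg(-M',-S',f')$ bounds a compact even-dimensional manifold $W$ as in Definition~\ref{boundary}. I claim $\Phi(N)$ is a $K^{\ep}$-boundary: cutting $S^1\times(M\amalg-M')$ along a slice $\{p\}\times(M\amalg-M')$, which is Poincar\'e dual to $d\theta$, exhibits $[0,1]\times(M\amalg-M')$ as the fundamental segment, so the associated half-open end is the cylinder $X_1=[0,\infty)\times(M\amalg-M')$; since $W$ has a product collar it glues to $X_1$ along the boundary to give a complete end-periodic manifold $Z_\infty$ modelled on $(S^1\times(M\amalg-M'),d\theta)$, and condition (b) of Definition~\ref{boundary} for $W$'s collar is precisely what makes the extended Dirac bundle on $W$ and the extended map agree on the overlap with the pulled-back data on $X_1$, hence extend over $Z_\infty$. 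Thus $\Phi(N)$ is a boundary. Now reversing the orientation of $M'$ reverses the product orientation, so $\Phi(-M',-S',f')=(-(S^1\times M'),\sigma(-S'),d\theta,f')$, whereas the negative of $\Phi(M',S',f')$ in $K_1^{\ep}(B\pi)$ is $(S^1\times M',\sigma(S'),-d\theta,f')$; the two are identified by the orientation/sign relation $(X,T,-\gamma,f)\sim(-X,\Pi(T),\gamma,f)$ together with an explicit $\mathbb{Z}_2$-graded Dirac-bundle isomorphism $\Pi(\sigma(S'))\cong\sigma(-S')$ (a swap of the two summands, with a sign). Since $\Phi$ commutes with disjoint union, it follows that $\Phi(M,S,f)\sim\Phi(M',S',f')$; this is the point at which the orientation/sign relation is needed.

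\emph{Bundle modification.} Let $P\to M$ be a principal $SO(2k)$-bundle producing the elementary modification $(\hat M,\hat S,\hat f)$ with $\hat M=P\times_\rho S^{2k}$ and $\hat S=S\otimes C\ell_\theta(S^{2k})$. Pulling $P$ back along the projection $S^1\times M\to M$ and applying the bundle-modification relation in $K_1^{\ep}(B\pi)$ to $\Phi(M,S,f)$ yields the $K^{\ep}$-cycle $\bigl(S^1\times\hat M,\;\sigma(S)\otimes C\ell_\theta(S^{2k}),\;d\theta,\;\hat f\bigr)$, so it remains to exhibit a $\mathbb{Z}_2$-graded Dirac-bundle isomorphism over $S^1\times\hat M$ between $\sigma(S)\otimes C\ell_\theta(S^{2k})$ and $\sigma(\hat S)=\hat S\oplus\hat S$; in slogan form, modification commutes with suspension. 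Both bundles have the two copies of $S\otimes C\ell_\theta(S^{2k})$ as underlying bundle and, under the natural identification, their even/odd splittings agree up to a reordering of summands, so a $\mathbb{Z}_2$-graded bundle isomorphism exists; the real work is to choose it so as to intertwine the two Clifford structures. For horizontal directions and for $\partial_\theta$ this is immediate, but for vertical directions one must note that $\sigma(S)\otimes C\ell_\theta(S^{2k})$ realises vertical Clifford multiplication diagonally while $\sigma(\hat S)$ realises it off-diagonally, so the isomorphism must swap the two odd summands; carefully unwinding the graded-tensor-product conventions to verify this intertwining is the step I expect to be the main obstacle. The remaining verifications are routine bookkeeping with the block formulas, orientation conventions, and the product collars.
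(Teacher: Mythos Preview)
Your proposal is correct and follows essentially the same route as the paper: both arguments check boundaries, negatives (invoking the orientation/sign relation together with an explicit graded Dirac-bundle isomorphism), disjoint union, direct sum, and bundle modification via the pulled-back principal bundle, reducing the last step to an identification $\sigma(S)\otimes C\ell_\theta(S^{2k})\cong\sigma(\hat S)$. Your ``swap with a sign'' for $\Pi(\sigma(S'))\cong\sigma(-S')$ is a valid choice (the paper instead uses Clifford multiplication by the volume element), and your diagnosis of the vertical-direction bookkeeping in bundle modification is exactly the ``straightforward yet tedious'' verification the paper leaves to the reader.
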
 

\begin{proof} It must be checked that each of the relations defining \(K_0(B\pi)\) are preserved by this map.\vspace{2mm} \begin{enumerate}[(1)]
 \setlength\itemsep{3mm}

\item \textbf{Boundaries:} Let \((M,S,f)\) be a boundary. Then we have a compact manifold \(W\) with boundary \(\partial W=M\) satisfying conditions (a) and (b) in Definition \ref{boundary}. To show that \((S^1\times M,S\oplus S,d\theta,f)\) is a boundary, we attach \(W\) to the half-cover \(X_1=\mathbb{R}_{\geq0}\times M\) to obtain a Riemannian manifold \(Z_\infty\). Over \(X_1\) is the bundle \(S\oplus S\), and over \(W\) is a bundle isomorphic to \(S\oplus S\). We use the isomorphism to glue the bundles together and define \(S\oplus S\) over \(Z_\infty\). The assumptions on the Clifford multiplication imply that it extends over this bundle. Since the map \(f\) on M extends to \(W\), the map \(f\) on \(S^1\times M\) extends to \(Z_\infty\).

\item \textbf{Negatives:} The negative of \((M,S,f)\) is \((-M,-S,f)\), which maps to \((-S^1\times M,-S\oplus-S,d\theta,f)\). The negative of \((-S^1\times M,-S\oplus-S,d\theta,f)\) is \[(-S^1\times M,-S\oplus-S,-d\theta,f)\sim(S^1\times M,\Pi(-S\oplus-S),d\theta,f)\] by the orientation/sign relation. The only difference between this cycle and \((X,S\oplus S,d\theta,f)\) is that the Clifford multiplication is negative; Clifford multiplication by vectors tangent to \(M\) has become negative and reversing the \(\mathbb{Z}_2\)-grading has caused \(\partial_\theta\) to act negatively. This cycle is isomorphic to \[(S^1\times M,S\oplus S,d\theta,f)\] via the identity map \(\varphi:M\to M\) and the isometric bundle isomorphism \(\psi:-S\oplus-S\to S\oplus S\), \(\psi(s\oplus t)=c(\omega)(s\oplus t)\), where \(\omega\) is the oriented volume element of \(S^1\times M\). Hence negatives are preserved by the mapping.

\item \textbf{Disjoint union:} Obvious.

\item \textbf{Bordism:} Since negatives map to negatives, boundaries map to boundaries, and disjoint union is preserved, it follows that bordism is also preserved.

\item \textbf{Direct sum/disjoint union:} Also obvious. 

\item \textbf{Bundle modification:} Let \((\hat{M},\hat{S},\hat{f})\) be an elementary bundle modification for \((M,S,f)\) associated to the principal \(SO(2k)\)-bundle \(P\to M\). We pullback \(P\) to a bundle over \(X=S^1\times M\), and use it to construct our bundle modification \((\hat{X},(S\oplus S)\string^,d\theta,f)\) of \((S^1\times M, S\oplus S,d\theta,f)\). It is clear that \(\hat{X}=S^1\times \hat{M}\). Now \(\hat{S}=S\otimes C\ell_\theta(S^{2k})\), so \[\hat{S}\oplus\hat{S}\cong(S\oplus S)\otimes C\ell_\theta(S^{2k})=(S\oplus S)\string^.\] It is straightforward yet tedious to verify that Clifford multiplication is preserved by this isomorphism. So the \(K^\ep\)-cycle obtained via bundle modification then mapping, is isomorphic to the \(K^\ep\)-cycle obtained by mapping then bundle modification.\qedhere \end{enumerate}\end{proof}

Now for the inverse map. Let \((X,S,\gamma,f)\) be an end-periodic cycle. Choose a submanifold \(Y\subset X\) Poincar{\'e} dual to \(\gamma\), oriented as in the paragraph after Definition \ref{ep-isom}. We map the cycle \((X,S,\gamma,f)\) to \((Y,S^+,f)\), where \(S^+\) and \(f\) are restricted to \(Y\). If \(\omega\) is an oriented volume form for \(Y\) then we let \(\partial_t\) be the unit normal to \(Y\) such that \(\partial_t\wedge\omega\) is the orientation on \(X\). The Clifford multiplication on \(S^+\) is then defined to be \[c_Y(v)=c_X(\partial_t)c_X(v)\] for \(v\in TY\). Note that this agrees with the conventions of (b) in Definition \ref{boundary}. One easily verifies that this indeed defines a Clifford multiplication on \(S^+\). 

\begin{lemma} The map sending an end-periodic cycle \((X,S,\gamma,f)\) to the cycle \((Y,S^+,f)\) described above, descends to a well-defined map of \(K\)-homologies.\end{lemma}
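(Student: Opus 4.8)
The plan is to verify that the assignment $(X,S,\gamma,f)\mapsto(Y,S^+,f)$ respects each of the five generating relations of $K_1^\ep(B\pi)$, so that it descends to a homomorphism $K_1^\ep(B\pi)\to K_1(B\pi)$. The first and most substantial case is \textbf{boundaries}: suppose $(X,S,\gamma,f)$ bounds in the sense of Definition \ref{ep-boundary}, so there is a compact $Z$ with $\partial Z=Y$ glued to the periodic end $X_1$ to form an end-periodic manifold $Z_\infty$ carrying an end-periodic extension of $S$ and $f$. I would argue directly that $Z$ itself, with the induced $\Z_2$-graded Dirac bundle and the restriction of $f$, exhibits $(Y,S^+,f)$ as a $K$-homology boundary in the sense of Definition \ref{boundary}. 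The conditions to check are that $Z$ is a product near $\partial Z=Y$ (arrange the metric on $Z_\infty$, or rather slide $Y$ slightly into the periodic end where the product structure $(-\epsilon,0]\times Y$ is automatic from how $X_1$ is built by gluing copies of $W$), that the Clifford multiplication of $S$ on the collar has the block form required in (b) of Definition \ref{boundary}, and that $f|_Z$ extends $f|_Y$. The block form is exactly the content of the convention $c_Y(v)=c_X(\partial_t)c_X(v)$ together with the identification $c_X(\partial_t)\colon S^+\to S^-$, so the even/odd decomposition $S=S^+\oplus S^-$ on $Z$ restricts on the collar to $S^+\oplus S^+$ with $c_W(\partial_t)$ antidiagonal $\mp I$, matching (b) after the harmless sign normalization absorbed by the orientation/sign relation.

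Next I would dispense with \textbf{negatives}: the negative of $(X,S,\gamma,f)$ is $(X,S,-\gamma,f)$, and negating $\gamma$ reverses the coorientation of $Y$, hence reverses its orientation and simultaneously flips which summand is called $S^+$; tracing through the formula $c_Y(v)=c_X(\partial_t)c_X(v)$ shows the image is $(-Y,-S^+,f)$, which is by definition the negative of $(Y,S^+,f)$. Given that negatives and boundaries are preserved, \textbf{bordism} follows formally, exactly as in the proof of Lemma \ref{map1}. \textbf{Direct sum/disjoint union} is immediate since restriction to $Y$ commutes with both operations (for disjoint unions one chooses $Y$ to be the disjoint union of Poincar\'e duals of the components). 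The \textbf{orientation/sign} relation $(X,S,-\gamma,f)\sim(-X,\Pi(S),\gamma,f)$ must map to an identity in $K_1(B\pi)$: both sides produce the same underlying manifold $Y$ with the same map $f$, and one checks the two recipes for the Clifford module and orientation agree after using $c_Y(v)=c_X(\partial_t)c_X(v)$ — reversing the orientation of $X$ flips $\partial_t$, while reversing the $\Z_2$-grading swaps $S^+$ and $S^-$, and these two flips cancel, so the images are literally isomorphic $K$-cycles.

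The remaining and, I expect, most delicate case is \textbf{bundle modification}. Given $(X,S,\gamma,f)$ and a principal $SO(2k)$-bundle $P\to X$, the modification is $(\hat X,\hat S,\hat\gamma,\hat f)$ with $\hat X=P\times_\rho S^{2k}$, $\hat\gamma=p^*\gamma$, and $\hat S=S\otimes C\ell_\theta(S^{2k})$. The point is to identify a submanifold $\hat Y\subset\hat X$ Poincar\'e dual to $\hat\gamma$ with the elementary bundle modification $\widehat{Y}$ of $Y$ built from $P|_Y\to Y$. Since $\hat\gamma=p^*\gamma$, the preimage $p^{-1}(Y)$ is Poincar\'e dual to $\hat\gamma$ (with the induced coorientation), and $p^{-1}(Y)=(P|_Y)\times_\rho S^{2k}$, which is exactly $\widehat Y$. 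It then remains to match the Clifford data: on one side we restrict $\hat S=S\otimes C\ell_\theta(S^{2k})$ to $\widehat Y$ using $c_{\widehat Y}(v)=c_{\hat X}(\partial_t)c_{\hat X}(v)$, where $\partial_t$ is horizontal (it is the pullback of the normal to $Y$), so $c_{\hat X}(\partial_t)=c_X(\partial_t)\otimes\epsilon$ acts only on the $S$-factor; on the other side $\widehat{S^+}=S^+\otimes C\ell_\theta(S^{2k})$ with the mixed Clifford formula of relation (2) in the $K$-homology section. A short computation, using that horizontal Clifford multiplication on $\hat S$ is $c_X(v)\otimes\epsilon$ and that $c_X(\partial_t)c_X(v)=c_Y(v)$ on $S^+$, shows these agree; the grading element $\epsilon$ bookkeeping is the only place one must be careful. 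The main obstacle throughout is precisely this sign/grading bookkeeping — making sure every identification is an \emph{honest} isomorphism of Dirac bundles (or differs from one by an already-established relation) rather than merely an abstract bordism — but no genuinely new idea beyond the conventions already fixed after Definition \ref{ep-isom} is needed.
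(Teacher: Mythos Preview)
Your proposal has a genuine gap: you never verify that the assignment is independent of the choice of Poincar\'e dual submanifold $Y$. The recipe $(X,S,\gamma,f)\mapsto(Y,S^+,f)$ begins by \emph{choosing} some $Y$ dual to $\gamma$, and two such choices $Y_1,Y_2$ need not even be diffeomorphic, let alone yield isomorphic $K$-cycles. Until this is settled the ``map'' is not a function on cycles, so checking compatibility with the five relations is premature. The paper treats this as a separate step: lift $Y_1$ and $Y_2$ to disjoint copies inside the $\Z$-cover $\tilde X$, take the compact piece $W$ lying between them, pull back $S$ and $f$, and adjust the metric near the boundary to a product; then $W$ exhibits $(Y_1,S^+,f)\amalg(-Y_2,-S^+,f)$ as a boundary in the sense of Definition~\ref{boundary}. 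This argument is short but essential, and nothing in your outline covers it.

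A smaller correction: in your treatment of negatives you say that replacing $\gamma$ by $-\gamma$ ``flips which summand is called $S^+$''. It does not --- the $\Z_2$-grading on $S$ is part of the data of the $K^\ep$-cycle and is untouched by $\gamma$. What actually happens is that reversing $\gamma$ reverses the orientation of $Y$ and hence the unit normal $\partial_t$, so the induced Clifford multiplication $c_Y(v)=c_X(\partial_t)c_X(v)$ on the \emph{same} bundle $S^+$ changes sign. The conclusion (that the image is the negative $K$-cycle) is correct, but the mechanism you describe is not. Similarly, in the orientation/sign step the two sides do \emph{not} produce literally the same Clifford module: one yields $(-Y,-S^+,f)$ and the other $(-Y,S^-,f)$ with a specific Clifford action, and an explicit isomorphism $\psi=c(\omega)\colon S^+\to S^-$ (with $\omega$ the oriented volume element of $Y$) is needed to identify them, as the paper does. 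Your ``two flips cancel'' heuristic obscures this.
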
 

\begin{proof} We must not only check that the relations defining end-periodic \(K\)-homology are preserved, but that the class in \(K\)-homology obtained is independent of the choice of \(Y\). \vspace{2mm}

\begin{enumerate}[(1)]
 \setlength\itemsep{3mm}

\item \textbf{Boundaries:} Let \((X,S,\gamma,f)\) be a boundary. Then there is a compact oriented manifold \(Z\) with boundary \(\partial Z=Y\) over which the \(\mathbb{Z}_2\)-graded Dirac bundle \(S\) and map \(f\) extend. We modify the metric near the boundary of \(Z\) to make it a product. It follows that the cycle \((Y,S^+,f)\) is a boundary.

\item \textbf{Choice of \(Y\):} Suppose \(Y_1\) and \(Y_2\) are submanifolds of \(X\) that are Poincar{\'e} dual to \(\gamma\). The class \(\gamma\) determines a \(\mathbb{Z}\)-cover \(\tilde{X}\) of \(X\), and \(Y_1,Y_2\) may be considered as submanifolds of this cover. Since both \(Y_1\) and \(Y_2\) are compact, they can be embedded in \(\tilde{X}\) so that they are disjoint. We delete the open subset of \(\tilde{X}\) lying outside of \(Y_1\) and \(Y_2\), and leave only the points in \(\tilde{X}\) between and including \(Y_1\) and \(Y_2\).\begin{figure}[h]
\includegraphics[height=1in]{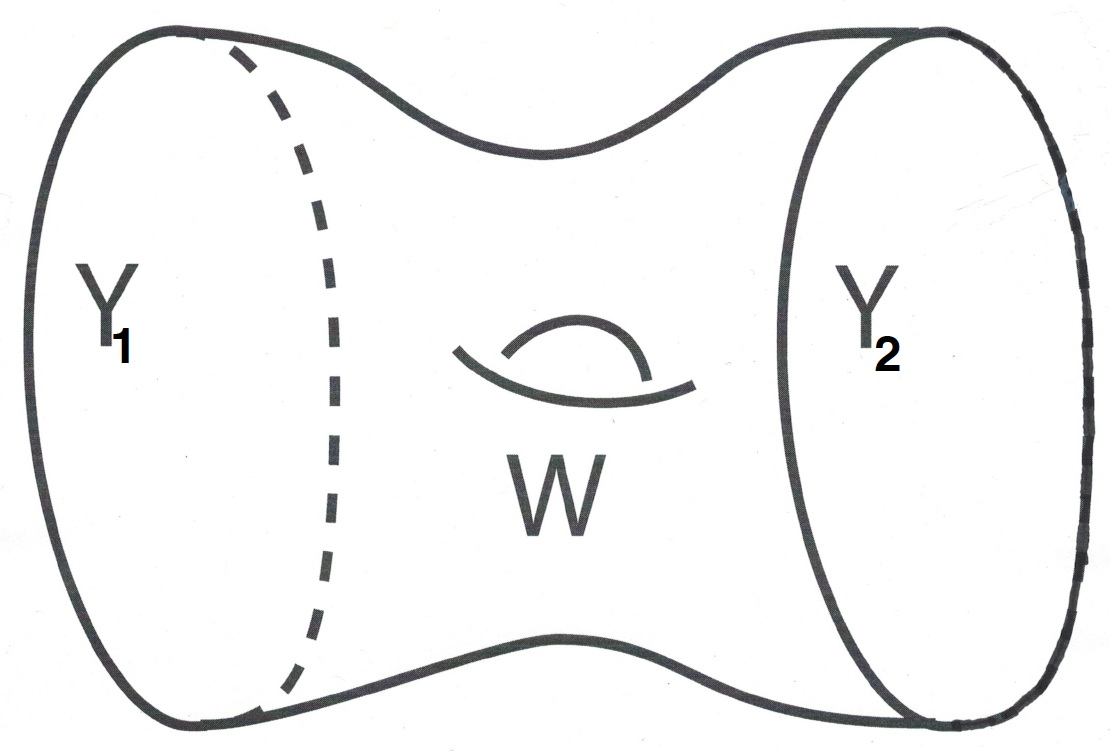}
\caption{Piece of an end-periodic manifold}
\label{fig:epc2}
\end{figure} We call the remaining manifold \(W\); it is a compact manifold with boundary \(\partial W=Y_1\amalg -Y_2\). We pull back the bundle \(S\) and the map \(f\) to \(W\), and modify the metric near the boundary so that it is a product. The result is that \(Y_1\amalg -Y_2\) is a boundary.

\item \textbf{Negatives:} Reversing the sign of \(\gamma\) changes the orientation of \(Y\). Clifford multiplication on \(Y\) also becomes negative, since changing the orientation on \(Y\) reverses the unit normal to \(Y\). Hence negatives of cycles map to negatives.

\item \textbf{Disjoint union:} Obvious.

\item \textbf{Bordism:} Since boundaries map to boundaries, negatives map to negatives, and disjoint union is preserved, it follows that bordism is also preserved.

\item \textbf{Direct sum/disjoint union:} Obvious.

\item \textbf{Orientation/sign:} From (3) in this proof, the \(K\)-cycle obtained from \((X,S,-\gamma,f)\) is the negative of the cycle \((Y,S^+,f)\). Now consider the \(K\)-cycle obtained from \((-X,\Pi(S),\gamma,f)\). Reversing the orientation on \(X\) will also reverse it on \(Y\). Instead of \(S^+\), we now take \(S^-\) with Clifford multiplication \[c_{S^-}(v)=c(-\partial_t)c(v)=-c(\partial_t)c(v)\] where \(v\in TY\) and \(-\partial_t\) is the unit normal to \(-Y\). We now show \((-Y,S^+,f)\) and \((-Y,S^-,f)\) are isomorphic. Let \(\omega\) be the oriented volume element of \(+Y\) (or \(-Y\), it does not matter) and define a map \(\psi:S^+\to S^-\) by \(\psi(s)=c(\omega)s\). Then \[\psi\circ c_{S^+}(v)=c_{S^-}(v)\circ\psi\] and the cycles are therefore isomorphic.

\item \textbf{Bundle modification:} Let \((\hat{X},\hat{S},\hat{\gamma},\hat{f})\) be an elementary bundle modification for \((X,S,\gamma,f)\), associated to the principal \(SO(2k)\)-bundle \(P\to X\). We restrict this principal bundle to \(Y\) and consider the corresponding bundle modification \((\hat{Y},\hat{S^+},\hat{f})\) for \((Y,S^+,f)\). It is clear that \(\hat{Y}\subset \hat{X}\) is Poincar{\'e} dual to \(\hat{\gamma}\). The bundle \[\hat{S}=S\otimes C\ell_\theta(S^{2k})\] has even part \[\hat{S}^+=(S^+\otimes C\ell_\theta^+(S^{2k}))\oplus (S^-\otimes C\ell_\theta^-(S^{2k})),\] while over \(\hat{Y}\) we have the bundle \[\hat{S^+}=S^+\otimes C\ell_\theta(S^{2k}).\] Identifying \(S^+\) with \(S^-\) via the isomorphism \(c(\partial_t)\), we see that \(\hat{S}^+\cong \hat{S^+}\). It is routine to check that the Clifford multiplications are preserved under this isomorphism.\qedhere
\end{enumerate} \end{proof}

\begin{theorem}\label{isom}
The above maps between \(K\)-homologies define an isomorphism of groups \(K_1(B\pi)\cong K_1^\ep(B\pi)\).
\end{theorem}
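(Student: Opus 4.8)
The plan is to show that the two maps constructed above — call them $\Phi\colon K_1(B\pi)\to K_1^\ep(B\pi)$ sending $[(M,S,f)]$ to $[(S^1\times M,S\oplus S,d\theta,f)]$, and $\Psi\colon K_1^\ep(B\pi)\to K_1(B\pi)$ sending $[(X,S,\gamma,f)]$ to $[(Y,S^+,f)]$ for a choice of Poincar\'e dual submanifold $Y$ — are mutually inverse. Both have already been shown to be well-defined homomorphisms in the two preceding lemmas, so only the two composition identities $\Psi\circ\Phi=\mathrm{id}$ and $\Phi\circ\Psi=\mathrm{id}$ remain.

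First I would check $\Psi\circ\Phi=\mathrm{id}$, which should be the easy direction. Starting from a $K$-cycle $(M,S,f)$, the cycle $\Phi(M,S,f)=(S^1\times M,S\oplus S,d\theta,f)$ has an obvious choice of Poincar\'e dual submanifold for $d\theta$, namely $Y=\{1\}\times M\cong M$. Restricting $(S\oplus S)^+ = S$ to this copy of $M$ recovers $S$, and one checks that the induced Clifford multiplication $c_Y(v)=c_X(\partial_t)c_X(v)$ agrees with the original $c_M(v)$ under the conventions of (b) in Definition \ref{boundary} — which was arranged precisely so this would work. The restricted map is $f$, so $\Psi(\Phi(M,S,f))=(M,S,f)$ on the nose. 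Since $\Psi$ is independent of the choice of $Y$ (proved above), this gives $\Psi\circ\Phi=\mathrm{id}$.

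The substantive direction is $\Phi\circ\Psi=\mathrm{id}$: given $(X,S,\gamma,f)$, choose $Y$ dual to $\gamma$, cut $X$ open along $Y$ to get $W$ with $\partial W = Y\amalg -Y$, and I must produce an equivalence between $(X,S,\gamma,f)$ and $(S^1\times Y, S^+\oplus S^+, d\theta, f|_Y)$ in $K_1^\ep(B\pi)$. The natural strategy is a bordism: the mapping torus-like object built from $W$ should exhibit the difference $(X,S,\gamma,f)\amalg -(S^1\times Y,\dots)$ as an end-periodic boundary. Concretely, I would take $Z$ to be $W$ itself (or $W$ with collar adjusted so the metric is a product near $\partial W$), glued to the periodic end $X_1$ modelled on $X\amalg X$ (with $\gamma$ on one factor, $-\gamma$ on the other) — using the $\mathbb Z$-cover picture, $W$ sits inside $\widetilde X$ as a fundamental domain, and attaching the two half-covers to its two boundary copies of $Y$ produces $\widetilde X$, which is end-periodic with end modelled on $(X\amalg X,\gamma\amalg -\gamma)$. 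This realises $(X,S,\gamma,f)\amalg(X,S,-\gamma,f)$ as a boundary, hence $(X,S,-\gamma,f)$ is the negative of $(X,S,\gamma,f)$; but that was already built into the definition. The real content is relating the end-periodic cycle on $(X,\gamma)$ to the product cycle on $(S^1\times Y, d\theta)$: here I expect to use that $S^1\times Y$ is itself obtained by gluing the two ends of the cylinder $[0,1]\times Y$, compare this with $W$ and note $\partial W=\partial([0,1]\times Y)$, and build a bordism from $W\cup_\partial([0,1]\times Y)$ (a closed-up manifold) together with the appropriate periodic ends. The $\mathbb Z_2$-graded Dirac bundle on this bordism is assembled from $S$ on the $W$ part and $S^+\oplus S^+$ on the cylinder part, matched along $Y$ using the identification $c_X(\partial_t)\colon S^+\xrightarrow{\sim}S^-$ as in the bundle-modification step of the previous lemma; the $\mathbb Z_2$-grading and Clifford relations on the collar are exactly the ones forced by Definition \ref{boundary}(b). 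One then invokes the orientation/sign relation to reconcile the two orientations of $Y$ coming from the two boundary components of $W$ — this is the point flagged in the text as the reason that relation was introduced.

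The main obstacle I anticipate is purely bookkeeping but genuinely delicate: tracking orientations and $\mathbb Z_2$-gradings consistently through the cut-and-reglue, so that the Clifford module data on the bordism actually restricts correctly to both pieces simultaneously, and verifying that the collar structure required by Definition \ref{ep-boundary} (product metric, the specific off-diagonal form of $c_W(\partial_t)$) can be arranged at every gluing locus at once. The orientation/sign and bundle-modification-style identifications $S^+\cong S^-$ are the tools that make this consistent, and assembling them into a single coherent bordism — rather than checking them relation-by-relation as in the well-definedness lemmas — is where the care is needed.
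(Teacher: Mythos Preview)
Your easy direction $\Psi\circ\Phi=\mathrm{id}$ is fine and matches the paper exactly.

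For $\Phi\circ\Psi=\mathrm{id}$ you have the right strategy --- a bordism in $K_1^\ep(B\pi)$ between $(X,S,\gamma,f)$ and $(S^1\times Y,S^+\oplus S^+,d\theta,f)$ --- but the construction you describe is tangled. Forming the closed manifold $W\cup_\partial([0,1]\times Y)$ and then adding ``appropriate periodic ends'' does not make sense as written: a closed manifold has no boundary to which ends can be attached, and it is not clear what end-periodic manifold you actually mean. You then anticipate delicate bookkeeping with orientations, gradings, and collar conditions that in fact never materialises once the right bordism is chosen.

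The paper's construction is much simpler and avoids all of this. Take the half-cover $X_1$ of $X$ associated to $-\gamma$; near its boundary it is a product $(-\delta,0]\times Y$. Take the half-cover of $S^1\times Y$ associated to $d\theta$, which is literally $\mathbb{R}_{\geq 0}\times Y$. Glue these two pieces along $Y$. The result is an end-periodic manifold with two ends, one modelled on $(X,-\gamma)$ and one on $(S^1\times Y,d\theta)$, and the Dirac bundle $S$ and map $f$ extend over the glued manifold automatically (both pieces already carry $S$ and $f$ pulled back from $X$, agreeing on the collar). This exhibits $(X,S,-\gamma,f)\amalg(S^1\times Y,S^+\oplus S^+,d\theta,f)$ as a boundary, which is exactly the bordism you need. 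No separate compact piece built from $W$ and a cylinder is required, and no orientation/sign or grading gymnastics are needed --- the paper dispatches the whole verification in one sentence.

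So: right plan, but you should replace your proposed bordism by the direct gluing of the two half-covers.
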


\begin{proof} We must check that the above maps on \(K\)-homologies are inverse to each other. If we begin with a cycle \((M,S,f)\), this maps to \((S^1\times M,S\oplus S,d\theta,f)\). Mapping this again, we get \((M,S,f)\) back, so this direction is easy. Now suppose we begin with a cycle \((X,S,\gamma,f)\). This maps to \((Y,S^+,f)\) which then maps to \((S^1\times Y,S^+\oplus S^+,d\theta,f)\). We will show this cycle is bordant to the original cycle \((X,S,\gamma,f)\). Consider the half cover \(X_1\) of \(X\) obtained using \(-\gamma\). Near the boundary, this is diffeomorphic to a product \((-\delta,0]\times Y\). The half cover of \(S^1\times Y\) obtained from \(d\theta\) is \(\mathbb{R}_{\geq0}\times Y\). The two half covers clearly glue together to produce and end-periodic manifold with two ends. 
\begin{figure}[h]
\includegraphics[height=1.2in]{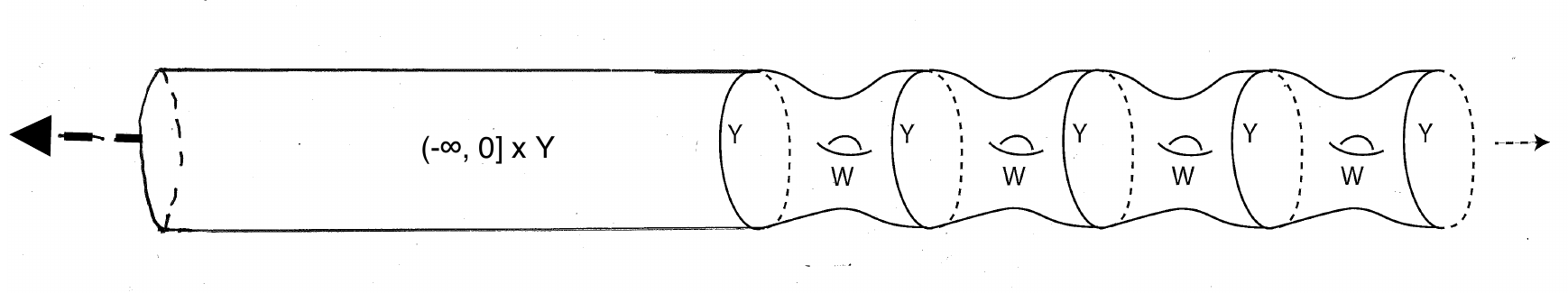}
\caption{End-periodic manifold with two ends}
\label{fig:epc2}
\end{figure}
The Dirac bundles and maps to \(B\pi\) extend over this manifold, and hence the two cycles are bordant. \end{proof}

\section{Relative eta/rho invariants}\label{end-per-rho}

In this section, we use the end-periodic eta invariant of MRS to define homomorphisms from the end-periodic \(K\)-homology group \(K_1^\ep(B\pi)\) to \(\mathbb{R}/\mathbb{Z}\). Any pair of unitary representations \(\sigma_1,\sigma_2:\pi\to U(N)\) will determine such a homomorphism, and we see that this homomorphism agrees with that constructed in Higson-Roe \cite{HigsonRoe} under the natural isomorphism \(K_1(B\pi)\cong K_1^\ep(B\pi)\).

\subsection{Rho invariant for \(K\)-homology}

Let \((M,S,f)\) be a \(K\)-cycle. Any Dirac operator for this cycle is a self-adjoint elliptic first order operator on \(S\), and so has a discrete spectrum of real eigenvalues. The \emph{eta function} of this operator is defined to be the sum over the non-zero eigenvalues of \(D\) \[\eta(s)=\sum_{\lambda\neq0}\text{sign}(\lambda)|\lambda|^{-s},\] which converges absolutely for \(\text{Re}(s)\) sufficiently large. It is a theorem of Atiyah, Patodi and Singer (APS) that this function admits a meromorphic continuation to the complex plane, and that this continuation takes a finite value \(\eta(0)\) at the origin. The \emph{eta invariant} of the chosen Dirac operator \(D\) is by definition \begin{equation}\label{eq:eta}
\eta(D)=\frac{\eta(0)-h}{2}
\end{equation} where \(h=\dim\ker(D)\) is the multiplicity of the zero eigenvalue.

The eta invariant plays a central role in the Atiyah-Patodi-Singer index theorem, appearing as a correction term for the boundary. Suppose \(W\) is an even dimensional manifold with boundary \(\partial W=M\), equipped with a Dirac bundle satisfying the conditions of Definition \ref{boundary}. Further, suppose we have a Dirac operator \(D(W)\) on \(W\) so that \begin{equation}\label{eq:W-Dirac}
D(W)=\begin{pmatrix}
0 & -\partial_t+D\\
\partial_t+D & 0
\end{pmatrix}
\end{equation} in a product neighbourhood of the boundary, where \(D\) is the Dirac operator on \(M\). In this instance we say that \(D(W)\) \emph{bounds} \(D\). Then the APS index theorem \cite{APS1} states \begin{equation}\label{eq:APS}\Ind_\text{APS}D^+(W)=\int_W{\bf I}(D^+(W))-\eta(D).\end{equation} The left-hand side is the index of \(D^+(W)\) with respect to a certain global boundary condition -- the projection onto the non-negative eigenspace of \(D\) must vanish. The integrand \({\bf I}(D^+(W))\) is the constant term in the asymptotic expansion of the supertrace of the heat operator for \(D^+(W)\), called the \emph{index form} of the Dirac operator.

\begin{remark}\label{h-sign}
In equation \eqref{eq:APS}, the eta invariant is as in \eqref{eq:eta}, where the sign of the term \(h=\dim\ker D\) is negative. This is contingent on the orientation of \(M\) being consistent with the boundary orientation inherited from \(W\). If the orientations are not compatible, then the sign of \(h\) is reversed in equation \eqref{eq:APS}.
\end{remark}

The map \(f\) in the cycle \((M,S,f)\) determines a principle \(\pi\)-bundle over \(M\). Given a representation \(\sigma_1:\pi\to U(N)\), we can then form a flat vector bundle \(E_1\to M\) and twist the Dirac operator \(D\) on \(S\) to obtain a Dirac operator \(D_1\) acting on sections of \(S\otimes E_1\). Given a second representation \(\sigma_2:\pi\to U(N)\) we form another operator \(D_2\) on \(S\otimes E_2\) in the same way. \begin{definition}\label{def:rho}
The \emph{relative eta invariant}, or \emph{rho invariant} associated to the two unitary representations \(\sigma_1,\sigma_2:\pi\to U(N)\), the \(K\)-cycle \((M,S,f)\) for \(B\pi\), and the choice of Dirac operator \(D\) for the \(K\)-cycle, is defined to be \[\rho\,(\sigma_1,\sigma_2\,;M,S,f)=\eta(D_1)-\eta(D_2).\]
\end{definition} The eta invariant of an operator depends sensitively on the operator itself, whereas the relative eta invariant is much more robust. The following is a restatement of Theorem 6.1 from Higson-Roe \cite{HigsonRoe}, and is the reason for our omission of \(D\) in the above notation for the rho invariant.

\begin{theorem}\label{eta1}
The \emph{mod} \(\mathbb{Z}\) reduction of the rho invariant \(\rho(\sigma_1,\sigma_2\,;M,S,f)\) for representations \(\sigma_1,\sigma_2:\pi\to U(N)\), depends only on the equivalence class of \((M,S,f)\) in \(K_1(B\pi)\), and on \(\sigma_1,\sigma_2\). There is therefore a well-defined group homomorphism \[\rho\,(\sigma_1,\sigma_2):K_1(B\pi)\to\mathbb{R}/\mathbb{Z}.\]
\end{theorem}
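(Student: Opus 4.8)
The plan is to show that $\rho(\sigma_1,\sigma_2\,;M,S,f)\bmod\Z$ is unchanged under each of the moves generating the equivalence relation defining $K_1(B\pi)$ — isomorphism of cycles, bordism, direct sum/disjoint union, and bundle modification — and is moreover independent of the auxiliary Dirac operator $D$; additivity of the eta invariant over disjoint unions then upgrades the resulting well-defined function to a homomorphism, since disjoint union is the group law on $K_1(B\pi)$. The one analytic ingredient that does the real work is the APS index theorem \eqref{eq:APS}, combined with the elementary observation that the index density is insensitive to twisting by a flat bundle of fixed rank: if $E$ carries a flat connection and has rank $N$, its Chern character form is the constant $N$, so ${\bf I}(D\otimes E)=N\cdot{\bf I}(D)$ as differential forms. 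Hence for two representations $\sigma_1,\sigma_2:\pi\to U(N)$ of the \emph{same} dimension, the index forms ${\bf I}(D_1^+(W))$ and ${\bf I}(D_2^+(W))$ coincide pointwise on any even-dimensional manifold $W$.

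The steps then run as follows. Isomorphism invariance is immediate, since an isomorphism of cycles carries $D$, its flat twistings, and hence their spectra, to those of the isomorphic cycle. For bordism, suppose $(M,S,f)\amalg(-M',-S',f')$ bounds a $\Z_2$-graded Dirac manifold $W$ as in Definition \ref{boundary}; extend $f$ over $W$, pull back the flat bundles $E_i$, and choose a Dirac operator $D(W)$ with collar form \eqref{eq:W-Dirac} bounding $D$, so that $D_i(W)=D(W)\otimes E_i$ bounds $D_i$. Applying \eqref{eq:APS} to $D_1^+(W)$ and $D_2^+(W)$ and subtracting, the index-form integrals cancel by the observation above, leaving
\[
\rho(\sigma_1,\sigma_2\,;M,S,f)-\rho(\sigma_1,\sigma_2\,;M',S',f')=\Ind_{\mathrm{APS}}D_2^+(W)-\Ind_{\mathrm{APS}}D_1^+(W)\in\Z,
\]
where one uses additivity of $\eta$ over disjoint union and the fact that orientation reversal negates the eta invariant up to the integer $h$ (Remark \ref{h-sign}). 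Running the same argument on the cylinder $W=[0,1]\times M$, equipped with a Dirac operator interpolating between two choices $D$ and $D'$ for the cycle (the space of such operators being affine), shows the rho invariant mod $\Z$ is independent of $D$, which is the promised justification for suppressing $D$ from the notation. Direct sum/disjoint union is clear: take the Dirac operator on $S_1\oplus S_2$ to be the direct sum and use additivity of $\eta$.

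That leaves bundle modification, which I expect to be the main obstacle. Using the preferred Dirac operator $\hat D=D\otimes\epsilon+I\otimes D_\theta$ of Remark \ref{rem:tensorDirac}, and noting that $E_i$ pulled back along the projection $\hat M\to M$ couples only to the $M$-factor, one has $\hat D_i=D_i\otimes\epsilon+I\otimes D_\theta$. The analytic input is that $D_\theta$ on the even sphere is invertible — $C\ell_\theta(S^{2k})$ is built from the spinor bundle of the round $S^{2k}$, whose positive scalar curvature forces the kernel to vanish by Lichnerowicz, and $C\ell_\theta(S^{2k})$ is engineered by Baum--Douglas so that the relevant index is $1$. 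Over a genuine product $\hat M=M\times S^{2k}$ the product formula for eta invariants gives $\eta(\hat D_i)=\eta(D_i)\cdot\ind(D_\theta)=\eta(D_i)$ exactly; for a nontrivial $SO(2k)$-bundle the sphere bundle $\hat M\to M$ is only locally a product, and patching the local product formulas introduces a correction expressible as an integral of index densities over even-dimensional manifolds built from the fibration data (plus a spectral flow), which for flat $E_i$ of equal rank contributes identically for $i=1,2$ — exactly parallel to the appearance of $\Ind_{\mathrm{APS}}$ in the bordism step. Taking the difference over $i$ gives invariance of $\rho\bmod\Z$ under bundle modification. With all relations checked, $\rho(\sigma_1,\sigma_2):K_1(B\pi)\to\R/\Z$ is well defined, and additivity of $\eta$ under disjoint union makes it a homomorphism. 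The delicate points will be the orientation bookkeeping in the bordism step (Remark \ref{h-sign}) and making the product-formula/gluing argument for bundle modification precise.
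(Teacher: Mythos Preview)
Your bordism argument is essentially identical to the paper's: apply APS to the two twisted operators on the bounding manifold, note that the index forms coincide because the twisting bundles are flat of equal rank, and subtract. The paper proves only this step explicitly and defers bundle modification entirely to Higson--Roe \cite{HigsonRoe}, calling it ``the most complicated part of the proof.''

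Where you go beyond the paper is in sketching bundle modification, and this is where there is a genuine gap. Your product-case formula $\eta(\hat D_i)=\eta(D_i)\cdot\ind(D_\theta)$ is correct, but the passage to nontrivial sphere bundles via ``patching the local product formulas'' is not a valid mechanism: the eta invariant is a global spectral quantity and does not localize. The actual Higson--Roe argument exploits the specific operator $\hat D=D\otimes\epsilon+I\otimes D_\theta$ of Remark~\ref{rem:tensorDirac}, for which $D\otimes\epsilon$ and $I\otimes D_\theta$ anticommute even over a twisted bundle; combined with the invertibility of $D_\theta$, a direct spectral analysis yields $\eta(\hat D_i)=\eta(D_i)$ \emph{exactly}, not merely modulo an integer correction. (This exact equality is why, in the structure group of Section~\ref{structure}, bundle modification preserves the integer $n$.) Your heuristic of ``correction terms that cancel for flat twists of equal rank'' points in a reasonable direction but does not substitute for this computation.
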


The most complicated part of the proof is showing invariance under bundle modification. We will not repeat the full proof, however we will show invariance under bordism since the argument serves to motivate the end-periodic case. 

\begin{proof} Let \((M,S,f)\) be a boundary---we will show that the rho invariant \(\rho\,(\sigma_1,\sigma_2\,;M,S,f)\) vanishes modulo \(\mathbb{Z}\). Let \(W\) be as in Definition \ref{boundary} and let \(D(W)\) be a Dirac operator on \(W\) which bounds the Dirac operator \(D\) on \(M\). Since the map \(f\) to \(B\pi\) extends to \(W\), we find twisted Dirac operators \(D_1(W)\) and \(D_2(W)\) on \(W\) bounding the twisted operators \(D_1\) and \(D_2\) on \(M\). Applying the APS index theorem separately to these operators gives \begin{equation}\label{eq:app}
\Ind_\text{APS}D_i^+(W)=\int_W{\bf I}(D_i^+(W)) -\eta(D_i)
\end{equation} for \(i=1,2\). Since \(D_1(W)\) and \(D_2(W)\) are both twists of the same Dirac operator \(D(W)\) by flat bundles of dimension \(N\), we have \[{\bf I}(D_1^+(W))={\bf I}(D_2^+(W))=N\cdot{\bf I}(D^+(W)).\] Subtracting the two equations \eqref{eq:app} from each other therefore yields \[\rho\,(\sigma_1,\sigma_2\,;M,S,f)=\eta(D_1)-\eta(D_2)=\Ind_\text{APS}D_2^+(W)-\Ind_\text{APS}D_1^+(W)\] which is an integer. 

Now, consider the negative cycle \((-M,-S,f)\) for \((M,S,f)\). If \(D\) is a Dirac operator for \((M,S,f)\), then \(-D\) is a Dirac operator for \((-M,-S,f)\). From the definition of the eta invariant \eqref{eq:eta} and from Remark \ref{h-sign}, we see that \(\eta(-D)=-\eta(D)\). Finally, the eta invariant is clearly additive under disjoint unions of cycles. It follows that if two cycles are bordant, then their eta invariants agree modulo integers.  \end{proof}

Higson and Roe \cite{HigsonRoe} used this map on \(K\)-homology to obtain obstructions to positive scalar curvature for odd-dimensional manifolds. Our isomorphism of \(K\)-homologies will allow us to transfer their results to the even dimensional case.

\subsection{Index theorem for end-periodic manifolds \cite{MRS}}

In \cite{MRS}, Mrowka, Ruberman and Saveliev prove an index theorem for end-periodic Dirac operators on end-periodic manifolds, which generalises the Atiyah-Patodi-Singer index theorem. Rather than the eta invariant appearing as a correction term for the end, a new invariant called the \emph{end-periodic eta invariant} appears, and this new invariant agrees with the eta invariant of Atiyah-Patodi-Singer in the case of a cylindrical end. In this section, we review the end-periodic index theorem of MRS, and give the necessary definitions and theorems required to define the end-periodic rho invariants. There is nothing new here, so the reader who is already familiar with the MRS index theorem may safely skip to Section \ref{RmodZ}

Let \((X,S,\gamma,f)\) be a \(K^\ep\)-cycle, and let \(D(X)\) be a Dirac operator for the cycle. Let \(\tilde{X}\) be the \(\mathbb{Z}\)-cover associated to \(\gamma\), and let \(F:\tilde{X}\to\mathbb{R}\) be the map which covers the classifying map \(X\to S^1\) for the \(\mathbb{Z}\)-cover \(\tilde{X}\). Then \(F\) satisfies \(F(x+1)=F(x)+1\), where \(x+1\) denotes the image of \(x\in\tilde{X}\) under the fundamental covering translation. It follows that \(dF\) descends to a well-defined one-form on \(X\), also denoted \(dF\). Fixing a branch of the complex logarithm, define a family of operators \[D_z(X)=D(X)-\ln(z)\,c(dF)\] on \(X\), where \(c(dF)\) is Clifford multiplication by \(dF\), and \(z\in\mathbb{C}^*\). These are in fact the operators obtained by conjugating the Dirac operator on \(\tilde{X}\) with the \emph{Fourier-Laplace transform}---see Section 2.2 of \cite{MRS} for more details. The \emph{spectral set} of this family of operators is defined to be the set of \(z\) for which \(D_z(X)\) is not invertible. The spectral sets of the families \(D_z^\pm(X)\) are defined similarly.

Henceforth, we will take \(Z_\infty\) to be an end-periodic manifold with end modelled on \((X,\gamma)\). All objects on \(Z_\infty\) will be taken to be end-periodic, unless stated otherwise. Now, the Fredholm properties of the end-periodic operator \(D^+(Z_\infty)\) are linked to the spectral set of the family \(D_z^+(X)\). In fact, it follows from Lemma 4.3 of Taubes \cite{Taubes}, that \(D^+(Z_\infty)\) is Fredholm if and only if the spectral set of the family \(D_z^+(X)\) is disjoint from the unit circle \(S^1\subset\mathbb{C}\). Thus, a necessary (but not sufficient) condition for \(D^+(Z_\infty)\) to be Fredholm is that \(\Ind D^+(X)=0\).

\begin{definition}[\cite{MRS}]\label{def:ep-eta}
Suppose that the spectral set of the family \(D_z^+(X)\) is disjoint from the unit circle \(S^1\subset\mathbb{C}\). The \emph{end-periodic eta invariant} for the Dirac operator \(D^+(X)\) is then defined as \[\eta^\ep(D^+(X))=\frac{1}{\pi i}\int_0^\infty\oint_{|z|=1}\Tr(c(dF)\cdot D_z^+\exp(-tD_z^-D_z^+))\,\frac{dz}{z}\,dt,\] where the Dirac operators in the integral are on \(X\), and the contour integral over the unit circle is taken in the anti-clockwise direction.
\end{definition}

\begin{remark}
There is an equivalent definition of the eta invariant in terms of the von Neumann trace---see Proposition 6.2 of \cite{MRS}, also \cite{Atiyah76} for information on the von Neumann trace.
\end{remark}

Suppose $X=S^1 \times Y$, where $Y$ is a compact oriented odd dimensional manifold, and $X$ is endowed with the product Riemannian metric. Assume the Dirac operator \(D(X)\) on \(X\) takes the form of that in the RHS of equation \eqref{eq:W-Dirac}, with \(D\) being the Dirac operator on \(Y\). Then it is shown in section 6.3 \cite{MRS} that for \(dF=d\theta\),
\[\eta^\ep(D^+(X)) = \eta(D).\]

We now state the end-periodic index theorem of Mrowka, Ruberman and Saveliev, in the case when the end-periodic operator \(D^+(Z_\infty)\) is Fredholm. Recall that for \(D^+(Z_\infty)\) to be Fredholm, it is necessary that \(\Ind D^+(X)=0\). The Atiyah-Singer index theorem then implies that the index form \({\bf I}(D^+(X))\) is exact, so one can find a form \(\omega\) on \(X\) satisfying \(d\omega={\bf I}(D^+(X))\).

\begin{theorem}[MRS Index Theorem, Theorem A, \cite{MRS}] \label{thmA,MRS}
Suppose that the end-periodic operator $D^+(Z_\infty)$ is Fredholm, and choose a form $\omega$ on $X$ such that 
$d\omega = {\bf I}(D^+(X))$. Then 
\smallskip
\begin{equation}\label{E:indexintro}
\Ind D^+ (Z_\infty) = \int_Z {\bf I}(D^+(Z))\; -
\int_Y\omega + \int_X dF\wedge\omega\; - \frac 1 2\,\eta^\ep (X).
\end{equation}
\end{theorem}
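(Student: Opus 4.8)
The plan is to follow the heat-kernel proof given in \cite{MRS}, which mirrors the derivation of the Atiyah--Patodi--Singer formula \eqref{eq:APS} but replaces the boundary analysis by Taubes' Fourier--Laplace analysis of end-periodic operators and Melrose's regularized (b-)trace. First I would set up the analysis: since \(D^+(Z_\infty)\) is Fredholm, one works on exponentially weighted Sobolev spaces on \(Z_\infty\) and considers the regularized supertrace \(\Phi(t)={}^b\!\operatorname{Str}\big(e^{-tD^2(Z_\infty)}\big)\) of the end-periodic heat operator. Its pointwise supertrace is absolutely integrable over the compact piece \(Z\), and over the periodic end \(X_1=\bigcup_{k\ge0}W_k\) it is compared with the supertrace of the model heat kernel on the \(\mathbb{Z}\)-cover \(\tilde X\), the difference being integrable; the technical input here is the mapping and relative trace-class properties of \(e^{-tD^2(Z_\infty)}\), which follow from the Fredholmness hypothesis via Lemma 4.3 of Taubes \cite{Taubes}.

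Next I would evaluate \(\Phi(t)\) in the two limits \(t\to0\) and \(t\to\infty\) and control its variation in between. As \(t\to0\), the pointwise supertrace \(e^{-tD^2(Z_\infty)}(x,x)\) has the usual local asymptotic expansion whose constant term is the index form \({\bf I}(D^+)\), so \(\lim_{t\to0}\Phi(t)={}^b\!\!\int_{Z_\infty}{\bf I}(D^+)\). Because \(\Ind D^+(X)=0\), the form \({\bf I}(D^+(X))\) is exact; writing \({\bf I}(D^+(X))=d\omega\), a Stokes computation on the periodic cylinder, keeping track of the finite part built into \({}^b\!\!\int\), evaluates the end contribution as \(-\int_Y\omega+\int_XdF\wedge\omega\), whence
\[
\lim_{t\to0}\Phi(t)=\int_Z{\bf I}(D^+(Z))-\int_Y\omega+\int_XdF\wedge\omega .
\]
As \(t\to\infty\), the spectral-gap hypothesis (the spectral set of \(D_z^+(X)\) is disjoint from the unit circle) forces \(e^{-tD^2(Z_\infty)}\) to converge to the projection onto the genuinely \(L^2\) kernel, so \(\lim_{t\to\infty}\Phi(t)=\dim\ker D^+(Z_\infty)-\dim\ker D^-(Z_\infty)=\Ind D^+(Z_\infty)\), with no \(\dim\ker\)-type correction because the \(L^2\) kernel is already counted by the Fredholm index and the spectral set avoids the unit circle.

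The decisive step is to show that the difference of these two limits equals \(\tfrac12\,\eta^\ep(X)\), i.e. \(\int_0^\infty\big(-\Phi'(t)\big)\,dt=\tfrac12\,\eta^\ep(X)\). On a compact manifold \(\Phi'\equiv0\); here the b-trace carries an anomaly concentrated on the periodic end, and computing \(\Phi'(t)\) by Duhamel's principle and conjugating the model operators on \(\tilde X\) by the Fourier--Laplace transform turns \(\int_0^\infty(-\Phi'(t))\,dt\) into the contour integral
\[
\frac{1}{\pi i}\int_0^\infty\oint_{|z|=1}\Tr\big(c(dF)\cdot D_z^+\exp(-tD_z^-D_z^+)\big)\,\frac{dz}{z}\,dt ,
\]
which is exactly \(\eta^\ep(D^+(X))\) as in Definition \ref{def:ep-eta}. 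Assembling the three statements yields \eqref{E:indexintro}. I expect the main obstacle to be this last step: setting up the regularized-trace formalism on end-periodic manifolds rigorously, obtaining the uniform-in-\(t\) estimates needed to integrate \(\Phi'\) over \((0,\infty)\), and carrying out the contour manipulation (with the correct sign) that identifies the anomaly with Taubes' integral defining \(\eta^\ep\). By comparison the small-time analysis is local — the compact Atiyah--Singer computation together with a Stokes argument on the cylinder — and the large-time behaviour is a soft consequence of the spectral gap.
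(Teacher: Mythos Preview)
The paper does not prove this statement at all: Theorem~\ref{thmA,MRS} is quoted verbatim from \cite{MRS} as background, with no argument given here, so there is no ``paper's own proof'' to compare your proposal against. Your outline is a faithful high-level sketch of the heat-kernel/b-trace argument that Mrowka--Ruberman--Saveliev actually carry out in \cite{MRS}, and in that sense it is appropriate; but for the purposes of the present paper the theorem is simply cited as a black box and you need not reproduce any of this machinery.
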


\begin{remarks}
The form $\omega$ is called the \emph{transgression class} -- see Gilkey \cite{Gilkey}, page 306 for more details. In the case that the metric is a product near \(Y\), one can choose \(F\) so that the two integrals involving the transgression class cancel, leaving a formula similar to the original APS formula.
The theorem reduces to the APS index theorem \cite{APS1} when $Z_\infty$ only has cylindrical ends.
\end{remarks}

When \(D^+(Z_\infty)\) is \emph{not} Fredholm, Mrowka, Ruberman and Saveliev are still able to prove an index theorem under the assumptions that the spectrum of the family \(D_z^+(X)\) is discrete, which in particular implies \(\text{Ind}D^+(X)=0\). This is analogous to the case in the APS index theorem when the Dirac operator \(D\) on the boundary has a non-zero kernel, and the correction \(h=\dim\ker D\) appears in the formula.

The key is to introduce the \emph{weighted Sobolev spaces} on \(Z_\infty\) as follows. First recall that the Sobolev space \(L^2_k(Z_\infty,S)\) for an integer \(k\geq0\), is defined as the completion of \(C_0^\infty(Z_\infty,S)\) in the norm \[\|u\|_{L^2_k(Z_\infty,S)}^2=\sum_{j\leq k}\int_{Z_\infty}|\nabla^{j}u|^2\] for a fixed choice of end-periodic metric and compatible end-periodic Clifford connection on \(Z_\infty\). Now, restrict the upstairs covering map \(F:\tilde{X}\to\mathbb{R}\) to the half-cover \(X_1=\bigcup_{k\geq0}W_k\), and choose an extension of this map to \(Z_\infty\), which we continue to denote \(F\). Given a weight $\delta \in \R$ and an integer $k\geq0$, we say that $u \in L^2_{k,\delta}\, (Z_\infty,S)$ if $e^{\delta F} u \in L^2_k\,(Z_\infty,S)$. Define the \(L^2_{k,\delta}\)-norm by
\[ 
\| u \|_{L^2_{k,\delta}\,(Z_\infty,S)} = \|\,e^{\delta F}\,u \|_{L^2_k\,(Z_\infty,S)}.  
\] 
It is easy to check that up to equivalence of norms, this is independent of the choice of extension of \(F\) to \(Z_\infty\), since the region over which we are choosing an extension is compact. The spaces \(L^2_{k,\delta}(Z_\infty,S)\) are all complete in this norm, and the operator $D^+(Z_\infty)$ extends to a bounded operator 
\begin{equation}\label{eq:dirac}
D^+(Z_\infty): L^2_{k+1,\delta}\, (Z_\infty,S^+) \to L^2_{k,\delta}\, (Z_\infty,S^-)
\end{equation} for every \(k\) and \(\delta\). The following theorem of Taubes \cite{Taubes} classifies Fredholmness of the operator \eqref{eq:dirac} in terms of the family \(D_z^+(X)=D^+(X)-\ln(z)\,c(dF)\).

\begin{lemma}[Lemma 4.3 \cite{Taubes}]\label{Taubes-fred1}
The operator $D^+(Z_\infty):L^2_{k+1,\delta}\, (Z_\infty,S^+) \to L^2_{k,\delta}\, (Z_\infty,S^-) $ is Fredholm if and only if the operators 
$D^+_z(X)$ are invertible for all $z$ on the circle $|z| = e^\delta$.
\end{lemma}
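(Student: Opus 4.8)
The plan is to reduce the Fredholm question for $D^+(Z_\infty)$ on the weighted space $L^2_{k+1,\delta}$ to the Fredholm question on the \emph{unweighted} space $L^2_{k+1}$ for a conjugated operator, and then to invoke the standard model analysis near a periodic end. First I would observe that the weight can be absorbed: the multiplication operator $u\mapsto e^{\delta F}u$ is an isometric isomorphism $L^2_{k+1,\delta}(Z_\infty,S^+)\to L^2_{k+1}(Z_\infty,S^+)$ (and likewise in degree $k$ on $S^-$) by the very definition of the weighted norm, up to equivalence of norms coming from the choice of extension of $F$, which is harmless since that ambiguity is supported on a compact set. Conjugating, $D^+(Z_\infty)$ on $L^2_{k+1,\delta}$ is Fredholm if and only if $e^{\delta F}D^+(Z_\infty)e^{-\delta F}$ is Fredholm on the unweighted space. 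Since $[D^+(Z_\infty),F]=c(dF)$ acts by the bundle endomorphism $c(dF)$, we get $e^{\delta F}D^+(Z_\infty)e^{-\delta F}=D^+(Z_\infty)-\delta\,c(dF)$, which on the periodic end $X_1$ is precisely the pullback of $D^+(X)-\delta\,c(dF)=D^+_z(X)$ with $z=e^{\delta}$ (taking the real branch $\ln z=\delta$). So it suffices to treat the case $\delta=0$ and show $D^+(Z_\infty)$ is Fredholm on $L^2$ iff $D^+_z(X)$ is invertible for all $|z|=1$.

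Next I would set up the parametrix patching. Write $Z_\infty=Z\cup_Y X_1$ with $X_1=\bigcup_{k\ge0}W_k$ the periodic end. Over the compact piece $Z$ ellipticity gives an interior parametrix modulo compact operators in the usual way, so the only issue is the periodic end. On $X_1$ the operator is end-periodic, i.e.\ invariant under the deck translation of the $\mathbb{Z}$-cover $\tilde X$, and the Fourier--Laplace transform in the periodic variable diagonalizes it as the family $D^+_z(X)$ parametrized by $z$ in the unit circle (this is exactly the family recalled before Definition \ref{def:ep-eta}, with $dF$ the descended one-form). One then builds an end parametrix by the inverse Fourier--Laplace transform of $z\mapsto D^+_z(X)^{-1}$; this is a bounded operator on $L^2$ of the end precisely when $z\mapsto D^+_z(X)^{-1}$ is continuous (hence bounded) on $|z|=1$, i.e.\ when no $z$ on the unit circle lies in the spectral set. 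Gluing the interior and end parametrices with a partition of unity produces a two-sided parametrix for $D^+(Z_\infty)$ modulo compact operators, giving Fredholmness.

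For the converse, I would argue that if some $z_0$ with $|z_0|=1$ lies in the spectral set, then $D^+(Z_\infty)$ fails to be Fredholm: one can manufacture either an infinite-dimensional space of approximate solutions or show the range is not closed, by transplanting (via a cutoff) the null vectors of $D^+_{z_0}(X)$ — more precisely the corresponding quasi-periodic solutions $e^{(\ln z_0)F}\varphi$ with $\varphi$ in the kernel — further and further out along the periodic end; these are ``almost'' in $L^2$ and almost in the kernel, and a Weyl-sequence argument contradicts the a priori estimate that a Fredholm operator satisfies. This is the direction where one must be a little careful, since when $|z_0|=1$ the model solutions are bounded but not decaying, so the cutoff-and-translate construction must be balanced against the size of the commutator terms; this balancing is the main technical obstacle, and it is exactly the content of Taubes' argument in \cite{Taubes}. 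Since the statement is quoted directly from \cite{Taubes}, I would in the paper simply cite Lemma 4.3 of \cite{Taubes} for this equivalence and only indicate the weight-absorption step above, which is the part specific to our setup.
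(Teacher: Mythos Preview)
The paper does not prove this lemma at all; it merely states it and attributes it to Taubes \cite{Taubes}, Lemma 4.3. Your concluding sentence---that you would ``simply cite Lemma 4.3 of \cite{Taubes} for this equivalence''---is therefore exactly what the paper does, so in that sense your proposal agrees with the paper.

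That said, the sketch you provide goes well beyond the paper and is essentially the correct outline of Taubes' argument: the weight-absorption step $e^{\delta F}D^+(Z_\infty)e^{-\delta F}=D^+(Z_\infty)-\delta\,c(dF)$ is right and does reduce the $\delta$-weighted question to the unweighted one with the family shifted to the circle $|z|=e^\delta$; the Fourier--Laplace parametrix construction on the periodic end patched with an interior parametrix is indeed how Fredholmness is established when the spectral set avoids the circle; and the Weyl-sequence obstruction you describe is the standard route for the converse. One small point: the multiplication by $e^{\delta F}$ is not literally an isometry between the weighted and unweighted $L^2_k$ spaces for $k\ge 1$ (derivatives of $e^{\delta F}$ enter), but it is a topological isomorphism, which is all you need. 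None of this appears in the paper, which is content to invoke Taubes.
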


The usual \(L^2\)-case corresponds to the weighting \(\delta=0\), and hence we see by setting \(z=1\):

\begin{corollary}\label{C:fred}
A necessary condition for the operator $D^+(Z_\infty)$ to be Fredholm is that $\Ind D^+(X) = 0$. 
\end{corollary}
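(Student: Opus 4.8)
The plan is to deduce this immediately from Lemma \ref{Taubes-fred1} by specializing the weight and then the spectral parameter. First I would observe that the unweighted operator $D^+(Z_\infty)$ appearing in \eqref{eq:dirac} is exactly the case $\delta = 0$, so the relevant circle in Lemma \ref{Taubes-fred1} is $|z| = e^0 = 1$. Hence if $D^+(Z_\infty)$ is Fredholm, then the operators $D_z^+(X)$ are invertible for every $z$ on the unit circle.

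Next I would evaluate the family $D_z^+(X) = D^+(X) - \ln(z)\,c(dF)$ at the single point $z = 1$. With the branch of the complex logarithm fixed (as in Definition \ref{def:ep-eta}) so that $\ln(1) = 0$, one has $D_1^+(X) = D^+(X)$, and therefore $D^+(X)$ is invertible as a bounded operator between the appropriate Sobolev completions of sections of $S^+$ and $S^-$ over $X$.

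Finally, since $X$ is a closed manifold and $D^+(X)$ is a first-order elliptic operator (its principal symbol being Clifford multiplication), it is Fredholm with a well-defined index $\Ind D^+(X)$; being invertible it has trivial kernel and cokernel, so $\Ind D^+(X) = 0$. There is essentially no obstacle here, as the entire analytic content is contained in Taubes's lemma, which is quoted as Lemma \ref{Taubes-fred1}; the only points requiring a moment's care are the identification of the unweighted operator with the $\delta = 0$ case and the bookkeeping of the branch of the logarithm so that the value $z = 1$ returns $D^+(X)$ itself.
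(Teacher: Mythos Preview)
Your proof is correct and follows exactly the same approach as the paper: identify the unweighted operator with the case $\delta=0$ in Lemma~\ref{Taubes-fred1}, and then specialize to $z=1$ (using $\ln(1)=0$) to conclude that $D^+(X)$ is invertible and hence has vanishing index. The paper's own argument is nothing more than the one-line remark ``the usual $L^2$-case corresponds to the weighting $\delta=0$, and hence we see by setting $z=1$,'' which you have spelled out in full.
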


The following result on the spectral set of the family is also due to Taubes, 
which suffices for our purposes.

\begin{theorem}[Theorem 3.1, \cite{Taubes}]\label{Taubes-fred2}
Suppose that $\,\Ind D^+ (X) = 0$ and that the map $c(dF): \ker D^+ (X) \to \ker D^- (X)$ is injective. Then the spectral set of the family $D^+_z (X)$ is a discrete subset of $\C^*$,
and the operator $D^+(Z_\infty)$ is a Fredholm operator. 
\end{theorem}

It follows that the operator $D^+(Z_\infty) $ acting on the Sobolev spaces of weight \(\delta\) is Fredholm for all but a closed discrete set of $\delta \in \R$.

\begin{remark}\label{discrete_asssumptions}
	There are two important instances where the hypothesis of Theorem \ref{Taubes-fred2} is satisfied: \begin{enumerate}
		\item When \(X=S^1\times M\) with the product metric, and the Dirac operator on \(X\) taking the form of equation \eqref{eq:W-Dirac}. In this case \(dF=d\theta\), and \(c(d\theta)\) is as in part (b) of Definition \ref{boundary}. This example shows that every class in \(K^\ep(B\pi)\) has a representative with discrete spectral set.
		\item When \(X\) is spin with positive scalar curvature and \(D^+(X)\) is the spin Dirac operator on \(X\) (or more generally, \(D^+(X)\) twisted by a flat bundle). In this case Lichnerowicz' vanishing theorem implies that \(\ker D^+(X)\) and \(\ker D^-(X)\) are trivial. In the applications to positive scalar curvature, we will always assume \(X\) to be spin, so that this assumption is satisfied.
	\end{enumerate}
\end{remark}

Theorem C, \cite{MRS}
extends Theorem \ref{thmA,MRS}  to the non-Fredholm case that applies to operators such as the signature operator and is 
analogous to the extended $L^2$ case considered in \cite{APS1}. 


We allow for the case where the family has poles lying on the unit circle, in which case the operator \(D^+(X)\) is not Fredholm. By discreteness of the spectral set, the family \(D_z^+(X)\) has no poles for \(z\) sufficiently close to (but not lying on) the unit circle, and hence there is \(\epsilon>0\) such that for all \(0<\delta<\epsilon\) the operators \(D_z^+(Z_\infty)\) acting on the \(\delta\)-weighted Sobolev spaces are all Fredholm (see Lemma \ref{Taubes-fred1}). The index does not change under small variations of \(\delta\) in this region, and we denote it by \(\Ind_\text{MRS} D^+(Z_\infty)\). This is the regularised form of the index which appears in the full MRS index theorem.

There are two more quantities to define which appear in the full MRS index theorem. First of all, the end-periodic eta invariant in Definition \ref{def:ep-eta} is no longer well defined if the family \(D_z^+(X)\) has poles on the unit circle. Letting \(\epsilon>0\) be sufficiently small so that there are no poles in \(e^{-\epsilon}<|z|<e^{\epsilon}\) except for those with \(|z|=1\), define 
\begin{equation}\label{E:eta-ep}
\eta^{\ep}_\epsilon(D^+(X)) = \frac 1 {\pi i}\,\int_0^{\infty}\oint_{|z| = e^{\epsilon}}\;
\Tr\left(df\cdot D^+_z \exp (-t (D^+_z)^* D^+_z)\right)\,\frac {dz} z\,dt,
\end{equation}

where the integral is taken to be the constant term of its asymptotic expansion in powers of \(t\). Define \[
\eta^\ep_{\pm} (D^+(X))\;=\; \lim_{\epsilon \to 0\pm}\,\eta^{\ep}_\epsilon(D^+(X)),
\] and \begin{equation}\label{E:etapm}
\eta^\ep(D^+(X))\;=\;\frac 1 2\;[\eta_+^\ep(D^+(X)) + \eta_-^\ep(D^+(X))].
\end{equation}

\smallskip\noindent
It is this incarnation of the eta invariant which will appear in the MRS index theorem. Since $(D^+_z)^* = D_z^-$ for $|z| = 1$ this definition of $\eta^\ep(X)$ agrees with Definition \ref{def:ep-eta} when there are no poles on the unit circle. 


The last term to define is the analogue of \(h=\dim\ker D\) appearing in the APS index theorem. The family \(D_z^+(X)^{-1}\) is meromorphic, so if \(z\in S^1\) is a pole then it has some finite order \(m\). Define \(d(z)\), as in Section 6.3 of \cite{MRS2}, to be the dimension of the vector space solutions \((\varphi_1,\ldots,\varphi_m)\) to the system of equations \[\begin{cases}
D_z^+(X)\varphi_1=c(dF)\varphi_2 \\
\vdots\\
D_z^+(X)\varphi_{m-1}=c(dF)\varphi_m \\
D_z^+(X)\varphi_m=0.
\end{cases}\] For \(z\) not in the spectral set of the family \(D_z^+(X)\), we have \(d(z)=0\). The term \(h\) in the MRS index theorem is defined as the finite sum of integers \[h=\sum_{|z|=1}d(z).\]

\begin{remark}
The integers \(d(z)\) give a formula for the change in index when one varies the weight \(\delta\); if \(\Ind_\delta D^+(Z_\infty)\) denotes the index of \(D^+(Z_\infty)\) acting on the \(\delta\)-weighted Sobolev spaces, then one has for \(\delta<\delta'\) that \[
\Ind_{\,\delta} D^+(Z_\infty) - \Ind_{\,\delta'} D^+(Z_\infty)\; =\; 
\sum_{e^{\delta}< |z| <\, e^{\delta'}}\; d(z).
\]
\end{remark}

\begin{theorem}[MRS Index Theorem, Theorem C, \cite{MRS}] \label{thmC,MRS}
Suppose the spectral set of $D^+_z (X)$ is a discrete subset of $\C^*$, and let $\omega$ be a form on $X$ such that $d\omega = {\bf I}(D^+(X))$. Then 
\[
\Ind_\emph{MRS}D^+(Z_\infty)\;=\;\int_Z\; {\bf I}(D^+(Z)) - \int_Y\;\omega\, + 
\int_X\;dF\wedge \omega\, -\, \frac {h + \eta^\ep\,(D^+(X))}{2}.
\]
\end{theorem}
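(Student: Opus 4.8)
The plan is to deduce the formula from the Fredholm case, Theorem~\ref{thmA,MRS}, by a weight-shifting argument parallel to the passage from the basic Atiyah--Patodi--Singer theorem to its ``extended $L^2$'' version in \cite{APS1}. The key point is that, since $D^+(Z_\infty)$ has principal symbol given by Clifford multiplication, $e^{\delta F}D^+(Z_\infty)e^{-\delta F}=D^+(Z_\infty)-\delta\,c(dF)$, so that $u\mapsto e^{\delta F}u$ identifies $D^+(Z_\infty)$ acting on the weighted space $L^2_{k+1,\delta}(Z_\infty,S^+)$ with the end-periodic operator $D^+_\delta:=D^+(Z_\infty)-\delta\,c(dF)$ acting on the unweighted $L^2$. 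The model of $D^+_\delta$ on the periodic end is $D^+(X)-\delta\,c(dF)$; its Fourier--Laplace family at parameter $z$ is $D^+_{ze^{\delta}}(X)$, so by Lemma~\ref{Taubes-fred1} the operator $D^+_\delta$ is Fredholm on $L^2$ precisely when $D^+_w(X)$ is invertible for all $|w|=e^{\delta}$. Since the spectral set of $D^+_z(X)$ is discrete, I fix $\epsilon>0$ so small that it meets the annulus $e^{-\epsilon}<|z|<e^{\epsilon}$ only along $|z|=1$; then $D^+_\delta$ is Fredholm for every $0<|\delta|<\epsilon$.

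Fix $0<\delta<\epsilon$. By the jump formula quoted in the Remark preceding Theorem~\ref{thmC,MRS} the index of $D^+(Z_\infty)$ on the $\delta$-weighted spaces is constant for $\delta\in(0,\epsilon)$ and by definition equals $\Ind_{\mathrm{MRS}}D^+(Z_\infty)$; thus $\Ind D^+_\delta=\Ind_{\mathrm{MRS}}D^+(Z_\infty)$. I would now apply Theorem~\ref{thmA,MRS} to $D^+_\delta$, whose model is $D^+(X)-\delta\,c(dF)$. Two simplifications occur. First, because $d(dF)=0$, the perturbation $-\delta\,c(dF)$ is the twist of $D^+$ by a flat (generically non-unitary) complex line bundle; a flat connection is locally trivial, so the index densities are unchanged, ${\bf I}(D^+(Z)-\delta\,c(dF))={\bf I}(D^+(Z))$ and ${\bf I}(D^+(X)-\delta\,c(dF))={\bf I}(D^+(X))$, and the same transgression form $\omega$ serves. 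Second, writing out the end-periodic eta invariant of the model $D^+(X)-\delta\,c(dF)$ and substituting $w=ze^{\delta}$ (which preserves $dz/z$ and carries $|z|=1$ to $|w|=e^{\delta}$) turns its correction term into $\eta^\ep_\delta(D^+(X))$, the expression \eqref{E:eta-ep} with $\epsilon$ replaced by $\delta$. Hence Theorem~\ref{thmA,MRS} gives, for all $0<\delta<\epsilon$,
\[
\Ind_{\mathrm{MRS}}D^+(Z_\infty)=\int_Z{\bf I}(D^+(Z))-\int_Y\omega+\int_X dF\wedge\omega-\tfrac12\,\eta^\ep_\delta(D^+(X)).
\]
Running the identical computation for $-\epsilon<\delta<0$ yields the same identity with $\Ind_{\mathrm{MRS}}D^+(Z_\infty)$ replaced by the index $I_-$ of $D^+(Z_\infty)$ on the $\delta$-weighted spaces (constant on $(-\epsilon,0)$), and with $\eta^\ep_\delta(D^+(X))\to\eta^\ep_-(D^+(X))$ as $\delta\to0^-$.

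Letting $\delta\to0^{\pm}$ in the two identities and abbreviating $A:=\int_Z{\bf I}(D^+(Z))-\int_Y\omega+\int_X dF\wedge\omega$, I obtain
\[
\Ind_{\mathrm{MRS}}D^+(Z_\infty)=A-\tfrac12\,\eta^\ep_+(D^+(X)),\qquad I_-=A-\tfrac12\,\eta^\ep_-(D^+(X)).
\]
The index-jump formula gives $I_--\Ind_{\mathrm{MRS}}D^+(Z_\infty)=\sum_{|z|=1}d(z)=h$. Adding the two identities and using $\tfrac12\bigl(\eta^\ep_+(D^+(X))+\eta^\ep_-(D^+(X))\bigr)=\eta^\ep(D^+(X))$ from \eqref{E:etapm}, I get $2\,\Ind_{\mathrm{MRS}}D^+(Z_\infty)+h=2A-\eta^\ep(D^+(X))$, which rearranges to the asserted formula.

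The step I expect to demand the most care is the identification of the correction term in the displayed identity: one must check that Theorem~\ref{thmA,MRS} does apply to the operators $D^+_\delta$, which are Fredholm but no longer self-adjoint, that the eta term it produces is the $(D^+_z)^{*}$-version \eqref{E:eta-ep} and not the $D^-_z$-version of Definition~\ref{def:ep-eta}, and then to match conventions---orientation of the contour, the sign in $D_z=D-\ln(z)\,c(dF)$, and the direction of the weight shift---so that the substitution $w=ze^{\delta}$ lands exactly on $\eta^\ep_\delta(D^+(X))$ with the right sign. All the analytic input---Theorem~\ref{thmA,MRS}, Lemma~\ref{Taubes-fred1}, discreteness of the spectral set, and the index-jump formula---is quoted from \cite{MRS} and \cite{Taubes}, so what remains is the bookkeeping above.
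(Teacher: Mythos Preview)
The paper does not contain a proof of this statement: Theorem~\ref{thmC,MRS} is quoted verbatim from \cite{MRS} as background (``MRS Index Theorem, Theorem C''), so there is no proof in the paper to compare your proposal against. Your weight-shifting argument---conjugating by $e^{\delta F}$ to reduce to the Fredholm case Theorem~\ref{thmA,MRS}, identifying the resulting eta term with $\eta^\ep_\delta$, and then averaging the $\delta\to0^\pm$ limits using the index-jump formula---is a coherent and essentially correct outline of how such a result is derived, and is in the spirit of how \cite{MRS} itself passes from Theorem~A to Theorem~C; the caveats you flag (applicability of Theorem~A to the non-self-adjoint perturbation, matching the $(D_z^+)^*$ versus $D_z^-$ conventions in the eta integrand, and sign/orientation bookkeeping) are exactly the points that require care, but none of them is addressed in the present paper since the result is simply imported.
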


\subsection{End-periodic $\RE/\ZZ$-index theorem}\label{RmodZ}

Let \(\sigma_1,\sigma_2:\pi\to U(N)\) be unitary representations of the discrete group \(\pi\). Using the end-periodic eta invariant of MRS, we will define an end-periodic rho invariant \(\rho^\ep(\sigma_1,\sigma_2)\) analogous to the rho invariant in the APS case. This will determine a map from end-periodic \(K\)-homology to \(\mathbb{R}/\mathbb{Z}\), however we must be more careful about how we define the rho invariant due to the MRS index theorem not being applicable to all operators. 

\begin{definition} Let \((X,S,\gamma,f)\) be a \(K^\ep\)-cycle. Assume we can choose a covering function \(F:\tilde{X}\to\mathbb{R}\) so that the spectral sets of the families of the twisted operators \(D_1^+(X)\) and \(D_2^+(X)\) are discrete. Then we define the \emph{end-periodic rho invariant} to be \[\rho^\ep(\sigma_1,\sigma_2\,;X,S,\gamma,f)=\frac{1}{2}[h_1+\eta^\ep(D_1^+(X))-h_2-\eta^\ep(D_2^+(X))].\]\end{definition}

By Lemma 8.2 of \cite{MRS}, this definition is independent of the choice of such function \(F\), if it exists.

\begin{theorem}\label{thm:RmodZ}
Whenever it is defined, the mod\, \(\mathbb{Z}\) reduction of the end-periodic rho invariant \(\rho^\ep(\sigma_1,\sigma_2\,;X,S,\gamma,f)\) associated to \(\sigma_1,\sigma_2:\pi\to U(N)\) depends only on the representations \(\sigma_1,\sigma_2\) and the equivalence class of \((X,S,\gamma,f)\) in \(K_1^\ep(B\pi)\). Moreover, every equivalence class has a representative with a well-defined rho invariant. Hence there is a well-defined group homomorphism \[\rho^\ep(\sigma_1,\sigma_2):K_1^\ep(B\pi)\to\mathbb{R}/\mathbb{Z}.\] Furthermore, the following diagram commutes: \begin{center}\begin{tikzcd}[column sep=small]
K_1^\ep(B\pi) \arrow{dr}[swap]{\rho^\ep(\sigma_1,\sigma_2)} \arrow{rr}{\sim} & & \arrow{ll} K_1(B\pi) \arrow{dl}{\rho\,(\sigma_1,\sigma_2)} \\
&  \mathbb{R}/\mathbb{Z}   & 
\end{tikzcd}\end{center}
\end{theorem}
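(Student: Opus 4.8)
The plan is to mirror the structure of the proof of Theorem \ref{eta1}, replacing the APS index theorem with the MRS index theorem (Theorems \ref{thmA,MRS} and \ref{thmC,MRS}), and to establish compatibility with the isomorphism $K_1(B\pi)\cong K_1^\ep(B\pi)$ of Theorem \ref{isom} by a direct computation on product cycles. First I would verify that the mod $\ZZ$ reduction of $\rho^\ep(\sigma_1,\sigma_2\,;X,S,\gamma,f)$ is unchanged under the relations generating $K_1^\ep(B\pi)$. For \emph{bordism} (the analogue of the boundary step in Theorem \ref{eta1}): suppose $(X,S,\gamma,f)$ is a boundary, so there is a compact $Z$ with $\partial Z=Y$ over which $S$ and $f$ extend, giving the end-periodic manifold $Z_\infty$. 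Choosing $F$ so that the twisted spectral sets are discrete (possible by hypothesis, using Lemma 8.2 of \cite{MRS} for independence of $F$), apply Theorem \ref{thmC,MRS} to the two twisted Dirac operators $D_1^+(Z_\infty)$ and $D_2^+(Z_\infty)$. Since $D_1(Z_\infty)$ and $D_2(Z_\infty)$ are twists of the same operator $D(Z_\infty)$ by flat bundles of equal rank $N$, the index forms agree: ${\bf I}(D_i^+(Z))=N\cdot{\bf I}(D^+(Z))$, and correspondingly the transgression-class terms $-\int_Y\omega_i+\int_X dF\wedge\omega_i$ can be taken equal (choose $\omega_i=N\omega$ for a single $\omega$ with $d\omega={\bf I}(D^+(X))$). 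Subtracting the two MRS index formulae then gives
\[
\rho^\ep(\sigma_1,\sigma_2\,;X,S,\gamma,f)=\tfrac12[h_1+\eta^\ep(D_1^+(X))-h_2-\eta^\ep(D_2^+(X))]=\Ind_{\text{MRS}}D_2^+(Z_\infty)-\Ind_{\text{MRS}}D_1^+(Z_\infty)\in\ZZ,
\]
so the rho invariant vanishes mod $\ZZ$ on boundaries. The \emph{negative} of $(X,S,\gamma,f)$ is $(X,S,-\gamma,f)$; reversing $\gamma$ reverses the sign of $dF$, hence negates $c(dF)$ and the family $D_z^+(X)$ transforms by $z\mapsto z^{-1}$, from which one checks $\eta^\ep(D^+(X))\mapsto -\eta^\ep(D^+(X))$ and $h\mapsto h$ (the defining system for $d(z)$ is carried to that for $d(1/z)$), so $\rho^\ep$ changes sign; combined with additivity under disjoint union (clear from additivity of both $h$ and $\eta^\ep$), invariance under bordism follows. \emph{Direct sum/disjoint union} is immediate from additivity, and \emph{orientation/sign} is handled by combining the negative-cycle computation with the fact that reversing the orientation of $X$ together with the $\ZZ_2$-grading of $S$ negates $\Ind_{\text{MRS}}$, which forces the two sides to have equal rho invariants mod $\ZZ$.

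The remaining relation, \emph{bundle modification}, is the main obstacle, exactly as in Higson-Roe \cite{HigsonRoe}. Here I would follow the Higson-Roe strategy adapted to the end-periodic setting: form the bundle modification $(\hat X,\hat S,\hat\gamma,\hat f)$, use the preferred Dirac operator $\hat D=D\otimes\epsilon+I\otimes D_\theta$ of Remark \ref{rem:tensorDirac} (tensored further with the flat bundles $E_i$), and show that $\tfrac12[h+\eta^\ep(\hat D^+(\hat X))]$ differs from $\tfrac12[h+\eta^\ep(D^+(X))]$ by an integer for each representation. The cleanest route is to exhibit an explicit end-periodic manifold whose MRS index formula, via Theorem \ref{thmC,MRS}, relates the two end-periodic eta invariants up to an integer: take the mapping-cylinder-type bordism between $(X,S,\gamma,f)$ and $(\hat X,\hat S,\hat\gamma,\hat f)$ built from the disc bundle $P\times_\rho D^{2k+1}$ (or rather its fibrewise even-dimensional analogue) over $X$, glue the periodic ends appropriately, and apply the subtraction argument above; the flat-twist invariance of the index forms again kills the interior and transgression contributions, leaving the desired integrality. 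One must check that the spectral-set discreteness hypothesis propagates to $\hat X$ — this holds because $c(d\hat F)$ acts as $c(dF)\otimes(\text{something invertible})$ on the relevant kernels, cf. the structure in part (2) of Remark \ref{discrete_asssumptions}. The bookkeeping of the $h$-terms under bundle modification is the genuinely delicate point, since $d(z)$ is defined through a Jordan-chain system rather than a naive kernel; I expect this to require the meromorphic index-change formula in the Remark after Theorem \ref{thmC,MRS} to control how the $h$'s combine.

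Finally, for the ``every class has a representative'' clause and the commuting triangle: by part (1) of Remark \ref{discrete_asssumptions}, every class in $K_1^\ep(B\pi)$ contains a product representative $(S^1\times M,S\oplus S,d\theta,f)$, which automatically has discrete spectral set for all twists (since $c(d\theta)$ has the off-diagonal form of Definition \ref{boundary}(b) and $\ker D_i^\pm$ are interchanged isomorphically), so $\rho^\ep$ is defined on it; combined with the mod $\ZZ$ invariance just proved, this gives a well-defined homomorphism $\rho^\ep(\sigma_1,\sigma_2):K_1^\ep(B\pi)\to\RE/\ZZ$. For commutativity of the triangle, chase a class through the isomorphism of Theorem \ref{isom}: under $K_1(B\pi)\to K_1^\ep(B\pi)$ the cycle $(M,S,f)$ maps to $(S^1\times M,S\oplus S,d\theta,f)$, and by the computation recalled in Section \ref{end-per-rho} (section 6.3 of \cite{MRS}) one has $\eta^\ep(D_i^+(S^1\times M))=\eta(D_i)$ with $h_i=0$, whence $\rho^\ep(\sigma_1,\sigma_2\,;S^1\times M,S\oplus S,d\theta,f)=\eta(D_1)-\eta(D_2)=\rho(\sigma_1,\sigma_2\,;M,S,f)$; since both $\rho$ and $\rho^\ep$ are well-defined on homology and the isomorphism is generated by $[M,S,f]\mapsto[S^1\times M,\dots]$, the triangle commutes. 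The only subtlety is that the other composite must also be checked, but that follows since the two maps of Theorem \ref{isom} are mutually inverse on homology, so it suffices to have established the identity on the product generators.
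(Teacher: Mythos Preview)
Your bordism step is correct and matches the paper exactly: apply the MRS index theorem to the two twisted operators, note that the index forms and transgression terms are $N$ times the untwisted ones so they cancel upon subtraction, and conclude that $\rho^\ep$ of a boundary is an integer. Your handling of negatives and disjoint unions is also fine.

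The genuine gap is your bundle modification argument. The construction you sketch (``mapping-cylinder-type bordism \dots built from the disc bundle $P\times_\rho D^{2k+1}$'') does not produce an end-periodic bordism between $(X,S,\gamma,f)$ and $(\hat X,\hat S,\hat\gamma,\hat f)$: the two manifolds have different dimensions, so there is no ordinary cobordism between them, and the disc bundle has boundary $\hat X$ alone, not $X\amalg -\hat X$. Even if one tried to use the disc bundle to show something bounds, one would be comparing $\rho^\ep(\hat X)$ to zero, not to $\rho^\ep(X)$. You acknowledge the $h$-bookkeeping is ``genuinely delicate'' and leave it unresolved; as it stands this step is not a proof.

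The paper avoids this difficulty entirely, and this is the key idea you are missing. Once bordism invariance of $\rho^\ep$ mod $\ZZ$ is established, one observes (exactly as in the proof of Theorem \ref{isom}) that every cycle $(X,S,\gamma,f)$ with discrete spectral set is \emph{bordant} to the product cycle $(S^1\times Y,S^+\oplus S^+,d\theta,f)$. Bordism invariance then gives
\[
\rho^\ep(\sigma_1,\sigma_2\,;X,S,\gamma,f)\equiv \rho^\ep(\sigma_1,\sigma_2\,;S^1\times Y,S^+\oplus S^+,d\theta,f)\pmod\ZZ,
\]
and by Section 6.3 of \cite{MRS} the right-hand side equals $\rho(\sigma_1,\sigma_2\,;Y,S^+,f)$. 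Thus $\rho^\ep$ factors, mod $\ZZ$, through the map $K_1^\ep(B\pi)\to K_1(B\pi)$ followed by $\rho(\sigma_1,\sigma_2)$. But that composite is already well defined on $K_1^\ep(B\pi)$ by Theorem \ref{eta1} together with the well-definedness of the map $K_1^\ep(B\pi)\to K_1(B\pi)$. Invariance of $\rho^\ep$ under bundle modification, direct sum/disjoint union, and orientation/sign therefore comes for free---there is no need to verify them directly for $\rho^\ep$. Your final paragraph already contains all the ingredients for this argument; you simply did not notice that it renders the direct bundle-modification check unnecessary.
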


Hence, even if the spectral set of \(D^+(X)\) is not discrete, we can still define its \(\R/\Z\) end-periodic rho invariant in a perfectly reasonable and consistent manner. This allows us to define the \(\R/\Z\) invariant, for instance, in the case where \(\Ind D^+(X)\neq0\). For the applications to positive scalar curvature, the end-periodic rho invariant is well-defined and given by the usual formula \eqref{E:etapm}, since in Remark \ref{discrete_asssumptions} we have noted that the spectral sets of its twisted operators are discrete.

\begin{proof}
	That every equivalence class in \(K^\ep\)-homology has a representative  with discrete spectral set follows from the proof of Theorem \ref{eta1}---the cycle \((X,S,\gamma,f)\) is bordant to the cycle \((S^1\times Y,S^+\oplus S^+,d\theta,f)\), which has discrete spectral set by part (1) of Remark \ref{discrete_asssumptions}.
	
As we shall see, it is only necessary to prove invariance of \(\rho^\ep\) under bordism, and then Theorem \ref{eta1} will imply invariance under the other relations defining \(K^\ep\)-homology. First suppose that \((X,S,\gamma,f)\) is a boundary with Dirac operator \(D^+(X)\) such that the families associated to the twisted operators \(D_1^+(X)\) and \(D_2^+(X)\) have discrete spectral sets. We apply the MRS index theorem to each operator separately to get \[
\Ind_\text{MRS}D_i^+(Z_\infty)\;=\;\int_Z\; {\bf I}(D_i^+(Z)) - \int_Y\;\omega_i\, + 
\int_X\;dF\wedge \omega_i\, -\, \frac {h_i + \eta^\ep\,(D_i^+(X))}{2}
\] for \(i=1,2\). Now, since we are twisting by flat vector bundles, both the index form and the transgression classes for the twisted operators are constant multiplies of the index form and transgression class of the original operator. Hence when we subtract the two equations, the terms involving these vanish and we are left with \[\rho^\ep(\sigma_1,\sigma_2\,;X,S,\gamma,f)=\Ind_\text{MRS}D_2^+(Z_\infty)-\Ind_\text{MRS}D_1^+(Z_\infty)\] which is an integer. The end-periodic rho invariant behaves additively under disjoint unions of cycles and changes sign when the negative of a cycle is taken. This proves bordism invariance mod \(\mathbb{Z}\).

Now the \(K^\ep\)-cycle \((X,S,\gamma,f)\) with discrete spectral sets is bordant to \((S^1\times Y,S^+\oplus S^+,d\theta,f)\), where \(Y\) is Poincar{\'e} dual to \(\gamma\). By Section 6.3 of MRS \cite{MRS}, the end-periodic rho invariant of \((S^1\times Y,S^+\oplus S^+,d\theta,f)\) is equal to the rho invariant of the \(K\)-cycle \((Y,S^+,f)\). Hence \[\rho^\ep(\sigma_1,\sigma_2\,;X,S,\gamma,f)=\rho\,(\sigma_1,\sigma_2\,;Y,S^+,f) \mod\mathbb{Z}.\] The isomorphism \(K_1(B\pi)\cong K_1^\ep(B\pi)\) then immediately implies the theorem.
\end{proof}

\section{End-Periodic Bordism Groups}\label{end-per-B}

In this section, we recall the definition of the spin bordism groups, and introduce the analogous bordism groups in the end-periodic setting. As for \(K\)-homology, there are natural isomorphisms between the spin bordism groups and the end-periodic spin bordism groups. We also consider the PSC spin bordism groups described in Botvinnik-Gilkey \cite{BG}, and define the corresponding end-periodic PSC spin bordism groups. Throughout, we take \(m\geq5\) to be a positive odd integer.

\subsection{Spin bordism and end-periodic spin bordism}

We recall the definition of the spin bordism group \(\Omega_m^\spin(B\pi)\) for a discrete group \(\pi\). 

\begin{definition}
An \emph{\(\Omega_m^\spin\)-cycle} for \(B\pi\) is a triple \((M,\sigma,f)\), where \(M\) is a compact oriented  Riemannian spin manifold of dimension \(m\), \(\sigma\) is a choice of spin structure on \(M\), and \(f:M\to B\pi\) is a continuous map.
\end{definition} 

The \emph{negative} of an \(\Omega_m^\spin\)-cycle \((M,\sigma,f)\) is \((-M,\sigma,f)\), where \(-M\) is \(M\) with the reversed orientation. An \(\Omega_m^\spin\)-cycle \((M,\sigma,f)\) is a \emph{boundary} if there exists a compact oriented Riemannian manifold \(W\) with boundary \(\partial W=M\), a spin structure on \(W\) whose restriction to the boundary is the spin structure \(\sigma\), and a continuous map \(W\to B\pi\) extending the map \(f\). Two \(\Omega_m^\spin\)-cycles \((M,\sigma,f)\) and \((M',\sigma',f')\) are \emph{bordant} if \((M,\sigma,f)\amalg(-M',\sigma',f')\) is a boundary.

\begin{definition}
The \(m\)-dimensional spin bordism group \(\Omega_m^\spin(B\pi)\) for \(B\pi\), consists of \(\Omega_m^\spin\)-cycles for \(B\pi\) modulo the equivalence relation of bordism. It is an abelian group with addition given by disjoint union of cycles.
\end{definition}

The end-periodic spin bordism group \(\Omega_m^\eps(B\pi)\), is defined in an analogous way to the end-periodic \(K\)-homology group. 

\begin{definition} An \(\Omega_m^\eps\)-cycle for \(B\pi\) is a quadruple \((X,\sigma,\gamma,f)\) where \(X\) is a compact oriented Riemannian spin manifold of dimension \(m+1\), \(\sigma\) is a spin structure on \(X\), \(\gamma\) is a cohomology class in \(H^1(X,\mathbb{Z})\) that is primitive on each component of \(X\), and \(f:X\to B\pi\) is a continuous map. 
\end{definition} 

The definition of a boundary is essentially the same as for end-periodic \(K\)-homology.

\begin{definition}
An \(\Omega_m^\eps\)-cycle \((X,\sigma,\gamma,f)\) is a \emph{boundary} if there exists an end-periodic oriented Riemannian spin manifold \(Z_\infty\) with end modelled on \((X,\gamma)\), such that the pulled back spin structure \(\sigma\) on the periodic end extends to \(Z_\infty\), as does the pulled back map \(f\) to \(B\pi\).
\end{definition}

The \emph{negative} of a cycle \((X,\sigma,\gamma,f)\) is \((X,\sigma,-\gamma,f)\). As before, we introduce the additional relation of \emph{orientation/sign}: \[(X,\sigma,-\gamma,f)\sim(-X,\sigma,\gamma,f).\] Two \(\Omega_m^\eps\)-cycles \((X,\gamma,\sigma,f)\) and \((X',\gamma',\sigma',f')\) are \emph{bordant} if \((X,\sigma,\gamma,f)\amalg(X,\sigma,-\gamma,f)\) is a boundary.

\begin{definition}
The \(m\)-dimensional \emph{end-periodic spin bordism group} \(\Omega_m^\eps(B\pi)\) consists of \(\Omega_m^\eps\)-cycles modulo the equivalence relation generated by bordism and orientation/sign, with addition given by disjoint union.
\end{definition}

Analogous to the \(K\)-homology groups from Section 2, there is a canonical isomorphism between the spin bordism and end-periodic spin bordism groups which we will now describe. 

The map \(\Omega_m^{\spin}(B\pi)\to\Omega_m^{\eps}(B\pi)\) takes a \(\Omega_m^{\spin}(B\pi)\)-cycle \((M,\sigma,f)\) to \((S^1\times M,1\times\sigma,d\theta,f)\), where \(S^1\times M\) has the product orientation and Riemannian metric, \(1\times\sigma\) is the product spin structure of the trivial spin structure \(1\) on \(S^1\) with the spin structure \(\sigma\) on \(M\), \(d\theta\) is the standard generator of the first cohomology of \(S^1\), and \(f\) is the obvious extension of \(f:M\to B\pi\) to \(S^1\times M\).

\begin{proposition}
The map which sends an \(\Omega_m^{\spin}(B\pi)\)-cycle \((M,\sigma,f)\) to the \(\Omega_m^{\eps}(B\pi)\)-cycle \((S^1\times M,1\times\sigma,d\theta,f)\) is well-defined on spin bordism groups.
\end{proposition}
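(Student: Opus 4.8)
The plan is to check that the map descends to bordism classes by verifying it respects the two defining relations of $\Omega_m^{\spin}(B\pi)$ (bordism) and produces cycles satisfying the defining relations of $\Omega_m^{\eps}(B\pi)$ (bordism and orientation/sign), exactly paralleling the argument for $K$-homology in Lemma \ref{map1}. Concretely, the only thing to prove is: if $(M,\sigma,f)$ is a boundary, then $(S^1\times M,1\times\sigma,d\theta,f)$ is a boundary; and that negatives map to negatives modulo the orientation/sign relation; disjoint union is obviously preserved, and then bordism invariance follows formally.

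First I would handle boundaries. Suppose $(M,\sigma,f)$ bounds a compact oriented spin manifold $W$ with $\partial W = M$, a spin structure restricting to $\sigma$, and a map $W\to B\pi$ extending $f$. I take the half-cover $X_1 = \R_{\geq 0}\times M$ of $S^1\times M$ (the cover associated to $d\theta$), which near its boundary $\{0\}\times M$ looks like a product; then I glue $W$ to $X_1$ along $M$ to form an end-periodic manifold $Z_\infty = W\cup_M X_1$. The product spin structure $1\times\sigma$ on $X_1$ and the spin structure on $W$ agree on the collar $(-\delta,0]\times M$ (both restrict to $\sigma$ on $M$ and to the trivial structure along the $\R$-direction), so they glue to a spin structure on $Z_\infty$; likewise the map $f$ on $X_1$ and its extension over $W$ glue to a map $Z_\infty\to B\pi$. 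By definition of a boundary for $\Omega_m^{\eps}$-cycles, $(S^1\times M,1\times\sigma,d\theta,f)$ is a boundary. Hence boundaries map to boundaries.

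Next, negatives: the negative of $(M,\sigma,f)$ is $(-M,\sigma,f)$, which maps to $(S^1\times(-M),1\times\sigma,d\theta,f) = (-(S^1\times M),1\times\sigma,d\theta,f)$ using the product orientation convention. The orientation/sign relation gives $(-(S^1\times M),1\times\sigma,d\theta,f)\sim(S^1\times M,1\times\sigma,-d\theta,f)$, which is exactly the negative of $(S^1\times M,1\times\sigma,d\theta,f)$ in $\Omega_m^{\eps}(B\pi)$. So negatives are carried to negatives. Since disjoint union is obviously respected, and being a bordism between cycles means the appropriate disjoint union is a boundary, these three facts together show bordant cycles have bordant images, so the map descends to $\Omega_m^{\spin}(B\pi)\to\Omega_m^{\eps}(B\pi)$.

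The only mild subtlety—the place I'd expect to spend the most care—is the spin-structure bookkeeping in the gluing: one must confirm that the product spin structure $1\times\sigma$ on the collar $(-\delta,0]\times M$ really does agree with the restriction of $W$'s spin structure, i.e.\ that ``trivial on the $\R$-factor, $\sigma$ on $M$'' is forced. This is standard (a spin structure on a collar $(-\delta,0]\times M$ is determined by its restriction to $M$, since $(-\delta,0]$ is contractible), but it is the one point where the argument uses more than formal manipulation; everything else is identical in spirit to Lemma \ref{map1}, just with the Dirac-bundle data replaced by spin-structure data, and is routine.
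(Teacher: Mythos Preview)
Your proof is correct and follows the same idea as the paper: glue the bounding manifold $W$ to a half-infinite cylinder to produce an end-periodic manifold witnessing that the image cycle bounds. The only organizational difference is that you decompose the argument into boundaries, negatives, and disjoint unions (mirroring Lemma~\ref{map1}), whereas the paper handles a bordism between two cycles in one step by attaching half-cylinders $\mathbb{R}_{\geq0}\times M$ and $\mathbb{R}_{\leq0}\times M'$ to the two boundary components of $W$ simultaneously; this is a cosmetic difference, not a substantive one.
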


\begin{proof}
If \((M,\sigma,f)\) and \((M',\sigma',f')\) are bordant, with \(W\) bounding their disjoint union, then \(\mathbb{R}_{\geq0}\times M\) and \(\mathbb{R}_{\leq0}\times M'\) can be joined using \(W\) to form and end-periodic manifold \(Z_\infty\) with multiple ends. All structures extend to \(Z_\infty\) by assumption, hence the two \(\Omega_m^{\eps}(B\pi)\)-cycles \((S^1\times M,1\times\sigma,d\theta,f)\) and \((-S^1\times M,1\times\sigma',-d\theta,f')\) are bordant. Using the orientation/sign relation, we see that \((S^1\times M,1\times\sigma,d\theta,f)\) and \((S^1\times M',1\times\sigma',d\theta,f')\) are equivalent.
\end{proof}

Now for the map \(\Omega_m^{\eps}(B\pi)\to\Omega_m^{\spin}(B\pi)\). Let \((X,\sigma,\gamma,f)\) be an \(\Omega_m^\eps\)-cycle for \(B\pi\), and \(Y\) be a submanifold of \(X\) Poincar{\'e} dual to \(\gamma\). We equip \(Y\) with the induced spin structure and orientation from \(\gamma\). Explicitly, the orientation of \(Y\) is as in the paragraph after Definition \ref{ep-isom}, and the restricted spin structure is obtained first by cutting \(X\) open along \(Y\) to get a manifold \(W\) with boundary \(\partial W=Y\amalg-Y\), and then taking the boundary spin structure on the positively oriented component \(Y\) of \(\partial W\).  This yields an \(\Omega_m^\spin\)-cycle \((Y,\sigma,f)\), where \(\sigma\) and \(f\) are restricted to \(Y\).

\begin{proposition}
The map taking an \(\Omega_m^{\eps}(B\pi)\)-cycle \((X,\sigma,\gamma,f)\) to the \(\Omega_m^{\spin}(B\pi)\)-cycle \((Y,\sigma,f)\) described above is well-defined on bordism groups.
\end{proposition}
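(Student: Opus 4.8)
The plan is to mirror, essentially verbatim, the proof of the corresponding lemma for end-periodic \(K\)-homology (the second Lemma of Section \ref{end-per-K}). Three things need to be checked: that the \(\Omega_m^\spin(B\pi)\)-cycle \((Y,\sigma,f)\) produced is independent of the chosen Poincar\'e dual submanifold \(Y\); that the bordism relation on \(\Omega_m^\eps\)-cycles is sent to the bordism relation on \(\Omega_m^\spin\)-cycles; and that the orientation/sign relation is respected. The first of these is what makes the assignment well defined already at the level of cycles; the other two then promote it to bordism groups.

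For independence of \(Y\): given \(Y_1,Y_2\subset X\) both Poincar\'e dual to \(\gamma\), pass to the \(\mathbb{Z}\)-cover \(\tilde X\to X\) determined by \(\gamma\), in which \(Y_1\) and \(Y_2\), being compact, can be isotoped to be disjoint. Let \(W\subset\tilde X\) be the compact region lying between them, so that \(\partial W=Y_1\amalg -Y_2\) with the boundary orientation conventions of Remark \ref{orientation}; pull back \(\sigma\) and \(f\) from \(X\) and perturb the metric near \(\partial W\) to be a product. Because cutting \(X\) open along \(Y_i\) agrees, in a neighbourhood of \(Y_i\), with a neighbourhood of \(Y_i\) inside \(\tilde X\), the spin structure to which the pulled-back structure restricts on each boundary component of \(W\) is exactly the induced spin structure that defines the image of \((X,\sigma,\gamma,f)\) through \(Y_i\). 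Hence \(W\) nullbords \((Y_1,\sigma,f)\amalg(-Y_2,\sigma,f)\), so the two candidate images agree in \(\Omega_m^\spin(B\pi)\).

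For the bordism relation: if \((X,\sigma,\gamma,f)\) is a boundary there is an end-periodic spin manifold \(Z_\infty\) with end modelled on \((X,\gamma)\) over which \(\sigma\) and \(f\) extend; the compact piece \(Z\subset Z_\infty\) with \(\partial Z=Y\), after making its metric a product near \(Y\), is a spin nullbordism of \((Y,\sigma,f)\), so the image vanishes. Replacing \(\gamma\) by \(-\gamma\) reverses the orientation of \(Y\) and leaves the spin structure unchanged, so negatives go to negatives; disjoint unions are manifestly preserved. Therefore bordant \(\Omega_m^\eps\)-cycles have bordant images. For orientation/sign one checks that \((X,\sigma,-\gamma,f)\) and \((-X,\sigma,\gamma,f)\) both map to \((-Y,\sigma,f)\): the first does by the negatives computation just made, and for the second, reversing the orientation of \(X\) reverses that of its Poincar\'e dual \(Y\), while tracing through the cutting construction returns the same induced spin structure \(\sigma\) on \(Y\).

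The step requiring the most care — the only real obstacle — is the orientation and spin-structure bookkeeping: verifying in the choice-of-\(Y\) argument that the boundary restriction of the pulled-back spin structure on \(W\) is literally the induced spin structure appearing in the definition of the map, and checking in the orientation/sign case that \((-X,\sigma,\gamma,f)\) yields the same induced spin structure on \(Y\) as \((X,\sigma,-\gamma,f)\). This is entirely analogous to, and in fact slightly simpler than, the corresponding points in the \(K\)-homology lemma, where the bundle-theoretic version of these identifications required the explicit isomorphism \(\psi(s)=c(\omega)s\); here there is no Dirac bundle, only a spin structure, so the identifications reduce to unwinding the orientation conventions. Nothing here is deep, but it is the part where one must be careful.
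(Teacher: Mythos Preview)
Your proposal is correct and follows essentially the same approach as the paper: independence of the choice of \(Y\) via the \(\mathbb{Z}\)-cover argument, orientation/sign by observing both cycles map to \((-Y,\sigma,f)\), and bordism invariance by exhibiting the compact piece \(Z\) as a spin nullbordism. The paper's proof is terser (it simply refers back to the \(K\)-homology case and handles bordant pairs directly rather than factoring through boundaries, negatives, and disjoint unions), but the content is identical.
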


\begin{proof}
Independence of the choice of \(Y\) is proved as for the \(K\)-homology case, only with spin structures instead of Dirac bundles. It is clear that the orientation/sign relation is respected, since both \((X,\sigma,-\gamma,f)\) and \((-X,\sigma,\gamma,f)\) get sent to \((-Y,\sigma,f)\). If \((X,\sigma,\gamma,f)\) and \((X',\sigma',\gamma',f')\) are bordant, then there is a compact manifold \(Z\) with boundary \(\partial Z=Y\amalg-Y'\) such that the spin structures and maps extend over \(Z\). But this shows that \((Y,\sigma,f)\) and \((Y',\sigma',f')\) are bordant.
\end{proof}

\begin{theorem}\label{thm:bor-isom}
The above maps of bordism groups are inverse to each other, and so define a natural isomorphism of abelian groups \(\Omega_m^\spin(B\pi)\cong \Omega_m^\eps(B\pi)\).
\end{theorem}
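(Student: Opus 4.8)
The plan is to verify that the two maps constructed before the theorem are mutually inverse, just as in the $K$-homology case (Theorem \ref{isom}), so that the statement follows immediately. Going from $\Omega_m^\spin(B\pi)$ to $\Omega_m^\eps(B\pi)$ and back is the easy direction: starting with $(M,\sigma,f)$, we produce $(S^1\times M,1\times\sigma,d\theta,f)$, and then a submanifold Poincar\'e dual to $d\theta$ may be taken to be $\{\mathrm{pt}\}\times M$, with the induced orientation and the induced (boundary) spin structure being exactly $\sigma$ by construction of the product spin structure. The induced map to $B\pi$ is $f$ again. Hence the composite is the identity on cycles, a fortiori on bordism classes.

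For the other composite, starting with an $\Omega_m^\eps$-cycle $(X,\sigma,\gamma,f)$, we first pass to $(Y,\sigma,f)$ for a chosen $Y$ Poincar\'e dual to $\gamma$, and then to $(S^1\times Y,1\times\sigma,d\theta,f)$. I would show this is bordant to the original cycle by imitating the proof of Theorem \ref{isom} verbatim, only replacing Dirac bundles with spin structures. Concretely: let $X_1$ be the half-cover of $X$ associated to $-\gamma$, which near its boundary $-Y$ looks like $(-\delta,0]\times Y$, and let $\mathbb{R}_{\geq 0}\times Y$ be the half-cover of $S^1\times Y$ associated to $d\theta$. Glue these along $Y$ (matching the product collars) to obtain an end-periodic manifold $Z_\infty$ with two periodic ends, one modelled on $(X,\gamma)$ and one on $(S^1\times Y, d\theta)$. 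The end-periodic spin structure and the end-periodic map to $B\pi$ are pulled back from $X$ on one end and from $S^1\times Y$ on the other; near the gluing region both restrict to the boundary spin structure and the restricted map on $Y$, so they patch together over the compact collar. This exhibits the disjoint union of $(X,\sigma,\gamma,f)$ with the negative of $(S^1\times Y,1\times\sigma,d\theta,f)$ — equivalently, after the orientation/sign relation, with $(S^1\times Y,1\times\sigma,d\theta,f)\amalg(-X,\sigma,-\gamma,f)$ rearranged appropriately — as a boundary, so the two classes agree in $\Omega_m^\eps(B\pi)$.

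Since both maps were already shown to be well-defined group homomorphisms on the respective bordism groups (the two preceding propositions), and they are mutually inverse on classes, they are inverse isomorphisms; naturality in $\pi$ is clear because every construction (products with $S^1$, taking Poincar\'e-dual submanifolds, pulling back spin structures and classifying maps) commutes with composition of $f$ by $B\psi$ for a group homomorphism $\psi$. The only point requiring any care — and the main obstacle, such as it is — is checking that the spin structures match up under the gluing in the bordism of the second composite: one must confirm that the boundary spin structure on $Y$ inherited from cutting $X$ agrees with the boundary spin structure on $Y\subset S^1\times Y$ coming from $\mathbb{R}_{\geq 0}\times Y$, which it does by the convention fixed in the paragraph before the previous proposition (the positively oriented boundary component of $W$ carries the restricted spin structure $\sigma$, and the product spin structure $1\times\sigma$ restricts to $\sigma$ on each slice). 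With that orientation/spin bookkeeping in place, the argument is otherwise identical to Theorem \ref{isom}.
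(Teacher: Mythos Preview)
Your proposal is correct and follows essentially the same approach as the paper: the paper checks the easy composite $(M,\sigma,f)\mapsto(S^1\times M,1\times\sigma,d\theta,f)\mapsto(M,\sigma,f)$ by observing the product spin structure restricts to $\sigma$, and for the other composite it simply invokes the same two-ended end-periodic bordism construction used in the proof of Theorem~\ref{isom}. Your write-up is in fact more detailed than the paper's, which dispatches the second composite in one sentence by reference; your explicit attention to matching the boundary spin structures at the gluing and to naturality in $\pi$ fills in points the paper leaves implicit.
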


\begin{proof}
A cycle \((M,\sigma,f)\) gets mapped to \((S^1\times M,1\times\sigma,d\theta,f)\), which gets returned to \((M,1\times\sigma,f)\), where the latter two entries are restricted to \(M\). It is straightforward to check that the product spin structure \(1\times \sigma\) restricted to \(M\) yields the original spin structure \(\sigma\). Therefore we obtain our original cycle \((M,\sigma,f)\) after mapping it to and from end-periodic bordism.

Now let \((X,\sigma,\gamma,f)\) be an end-periodic cycle, with submanifold \(Y\) Poincar{\'e} dual to \(\gamma\). This maps to a cycle \((Y,\sigma,f)\), where the latter two structures are restricted from \(X\), and this maps back to \((S^1\times Y,1\times\sigma,d\theta,f)\). The same argument as in the proof of Theorem \ref{ep-isom} shows that this is bordant to \((X,\sigma,\gamma,f)\).
\end{proof}


\subsection{PSC spin bordism and end-periodic PSC spin bordism}

In \cite{BG}, Botvinnik and Gilkey use a variant of spin cobodism tailored to the setting of manifolds with positive scalar curvature, which we now recall.

\begin{definition} 
A \(\Omega_m^{\spin,+}\)-cycle is a quadruple \((M,g,\sigma,f)\), where \(M\) is a compact oriented Riemannian spin manifold of dimension \(m\) with a metric \(g\) of positive scalar curvature, \(\sigma\) is a spin structure on \(M\), and \(f:M\to B\pi\) is a continuous map. 
\end{definition} 

The negative of \((M,g,\sigma,f)\) is \((-M,g,\sigma,f)\), as before. A cycle \((M,g,\sigma,f)\) is called a \emph{boundary} if there is a compact oriented Riemannian spin manifold \(W\) with boundary \(\partial W=M\) so that the spin structure \(\sigma\) and map \(f\) extend to \(W\). It is also required that \(W\) has a metric of positive scalar curvature that is a product metric \(dt^2+g\) in a neighbourhood of the boundary. Two cycles are \emph{bordant} if the disjoint union of one with the negative of the other is a boundary.

\begin{definition}
The \emph{PSC spin bordism group} \(\Omega_m^{\spin,+}(B\pi)\) for \(B\pi\) consists of \(\Omega_m^{\spin,+}\)-cycles modulo bordism, with addition given by disjoint union.
\end{definition}

We now define the end-periodic PSC spin bordism group \(\Omega_m^{\ep,\spin,+}(B\pi)\) for \(B\pi\). 

\begin{definition}
An \emph{\(\Omega_m^{\ep,\spin,+}\)-cycle} is a quintuple \((X,g,\sigma,\gamma,f)\), where \(X\) is a compact oriented Riemannian spin manifold of dimension \(m+1\) with a metric \(g\) of positive scalar curvature, \(\sigma\) is a choice of spin structure on \(X\), \(\gamma\) is a cohomology class in \(H^1(X,\mathbb{Z})\) whose restriction to each component of \(X\) is primitive, and \(f:X\to B\pi\) is a continuous map. We further require that there is a submanifold \(Y\) of \(X\) that is Poincar{\'e} dual to \(\gamma\), such that the induced metric on \(Y\) has positive scalar curvature, and the metric on \(X\) is a product metric \(dt^2+g_Y\) in a neighbourhood of \(Y\).
\end{definition}

Let \((X,g,\sigma,\gamma,f)\) be an \(\Omega_m^{\ep,\spin,+}\)-cycle and take \(Y\subset X\) to be a submanifold with PSC that is Poincar{\'e} dual to \(\gamma\), having the product metric in a tubular neighbourhood. As before we form \(X_1=\bigcup_{k\geq0}W_k\), where the \(W_k\) are isometric copies of \(X\) cut open along \(Y\). For \((X,g,\sigma,\gamma,f)\) to be a \emph{boundary} means that there is a compact oriented Riemannian spin manifold \(Z\) of positive scalar, whose metric is a product near the boundary, which can be attached to \(X_1\) along \(Y\) to form a complete oriented Riemannian spin manifold of PSC \(Z_\infty=Z\cup_YX_1\), such that the pulled back spin structure \(\sigma\) and map \(f\) on \(X_1\) extend over \(Z\).

The \emph{negative} of \((X,g,\sigma,\gamma,f)\) is \((X,g,\sigma,-\gamma,f)\), and we have the \emph{orientation/sign} relation \[(X,g,\sigma,-\gamma,f)\sim(-X,g,\sigma,\gamma,f).\] Two \(\Omega_m^{\eps,+}\)-cycles are \emph{bordant} if the disjoint union of one with the negative of the other is a boundary.

\begin{definition}
The \(m\)-dimensional \emph{end-periodic PSC spin bordism group} \(\Omega_m^{\ep,\spin,+}(B\pi)\) for \(B\pi\) consists of \(\Omega_m^{\eps,+}\)-cycles modulo bordism and orientation/sign, with addition given by disjoint union.
\end{definition}


\begin{theorem}\label{thm:psc-isom}
There is a canonical isomorphism \(\Omega_m^{\spin,+}(B\pi)\cong\Omega_m^{\eps,+}(B\pi)\).
\end{theorem}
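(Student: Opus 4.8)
The plan is to mimic the proof of Theorem \ref{thm:bor-isom}, but with the added bookkeeping of positive scalar curvature metrics and product collar structures. First I would describe the two maps. The map \(\Omega_m^{\spin,+}(B\pi)\to\Omega_m^{\eps,+}(B\pi)\) sends a cycle \((M,g,\sigma,f)\) to \((S^1\times M,\, d\theta^2+g,\, 1\times\sigma,\, d\theta,\, f)\); here the submanifold \(Y=\{\mathrm{pt}\}\times M\) is Poincar\'e dual to \(d\theta\), has induced metric \(g\) of PSC, and the product metric near \(Y\) is automatic since the metric on \(S^1\times M\) is globally a product. The reverse map sends an \(\Omega_m^{\eps,+}\)-cycle \((X,g,\sigma,\gamma,f)\) to \((Y,g_Y,\sigma,f)\), where \(Y\subset X\) is Poincar\'e dual to \(\gamma\) with the PSC product-collar structure guaranteed by the definition, and \(g_Y,\sigma,f\) are the restrictions. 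Well-definedness of each map on bordism classes is checked exactly as in the two propositions preceding Theorem \ref{thm:bor-isom}, carrying the metrics along: a PSC bordism \(W\) between cycles \((M,g,\sigma,f)\) and \((M',g',\sigma',f')\) (product near the boundary) is glued to the half-covers \(\mathbb{R}_{\geq0}\times M\) and \(\mathbb{R}_{\leq0}\times M'\) to produce an end-periodic PSC spin manifold with two ends, and the orientation/sign relation then identifies the images; independence of the choice of \(Y\) uses the piece \(W\subset\tilde X\) between two disjoint copies \(Y_1,Y_2\), which already carries a PSC product-collar metric since near each \(Y_i\) the metric on \(X\) (hence on \(\tilde X\)) is a product.

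Next I would verify the two compositions are the identity. Starting from \((M,g,\sigma,f)\), we get \((S^1\times M, d\theta^2+g,1\times\sigma,d\theta,f)\) and then restrict to \(Y=\{\mathrm{pt}\}\times M\), recovering \((M,g,\sigma,f)\) on the nose (using, as in Theorem \ref{thm:bor-isom}, that \(1\times\sigma\) restricts to \(\sigma\)). For the other composition, beginning with \((X,g,\sigma,\gamma,f)\) we pass to \((Y,g_Y,\sigma,f)\) and then to \((S^1\times Y, d\theta^2+g_Y, 1\times\sigma, d\theta, f)\); the claim is that this is end-periodic-PSC-bordant to \((X,g,\sigma,\gamma,f)\). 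Here I would reuse the geometric picture from the proof of Theorem \ref{isom}: take the half-cover \(X_1\) of \(X\) built from \(-\gamma\), which near its boundary is isometric to \((-\delta,0]\times Y\) with metric \(dt^2+g_Y\), and glue it along \(Y\) to the half-cover \(\mathbb{R}_{\geq0}\times Y\) of \(S^1\times Y\); the collars match because both are the product \(dt^2+g_Y\), so the result is a complete end-periodic PSC spin manifold with two ends over which \(\sigma\) and \(f\) extend, exhibiting the desired bordism.

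The main obstacle — and the only place where something beyond the \(K\)-homology/bordism template is needed — is ensuring that all the intermediate manifolds actually carry PSC metrics with the required product-collar form, i.e.\ that the constructions stay inside the PSC world. For the gluings this is fine, since one only ever glues along product collars \(dt^2+g_Y\). The subtle point is the definition of an \(\Omega_m^{\eps,+}\)-cycle itself: it builds in the existence of a PSC submanifold \(Y\) with a product neighbourhood, so restricting to \(Y\) is legitimate, but one should check that the choice of such \(Y\) does not matter for the resulting \(\Omega_m^{\spin,+}\)-class. This is where I expect to invoke (as hinted in the acknowledgements) the Schoen--Yau minimal hypersurface argument, or at least a concordance-implies-isotopy type statement, to see that two different Poincar\'e-dual PSC hypersurfaces in \(X\) give bordant (indeed, via the \(W\subset\tilde X\) piece, PSC-bordant) cycles; once that is in hand the rest is the routine diagram-chase above. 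I would flag this as the technical heart and otherwise present the proof as a direct adaptation of Theorems \ref{thm:bor-isom} and \ref{isom}.
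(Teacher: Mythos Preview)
Your proof is correct and follows exactly the approach of the paper, which simply writes ``As before.'' The concern you flag in the final paragraph, however, is unnecessary and not the technical heart of anything: you already handled independence of the choice of \(Y\) correctly in your first paragraph. If \(Y_1,Y_2\subset X\) are both Poincar\'e dual to \(\gamma\) with PSC induced metrics and product collars (the only \(Y\)'s eligible for the map), then the region \(W\subset\tilde X\) between disjoint lifts inherits the pulled-back PSC metric from \(X\) and is already a product near each boundary component---this is a valid PSC spin bordism with no further input. The Schoen--Yau minimal-hypersurface result alluded to in the acknowledgements is about \emph{existence} of a PSC hypersurface dual to \(\gamma\) (used later in Section~\ref{PSC} to verify hypotheses of the applications in low dimensions), not about well-definedness of the map here, where such existence is built into the definition of an \(\Omega_m^{\eps,+}\)-cycle.
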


The maps are exactly as for the spin bordism theories, only when mapping from \(\Omega_m^{\ep,\spin,+}(B\pi)\) to \(\Omega_m^{\spin,+}(B\pi)\) the Poincar{\'e} dual submanifold \(Y\) must be taken to have PSC and a product metric in a tubular neighbourhood.

\begin{proof}
As before.
\end{proof}

\subsection{Rho invariants}

Given a triple \((M,\sigma,f)\) and two unitary representations \(\sigma_1,\sigma_2:\pi\to U(N)\), we define the rho invariant \(\rho\,(\sigma_1,\sigma_2\,;M,\sigma,f)\) as before, using the spin Dirac operator for the cycle \((M,S,f)\). We also define the end-periodic rho invariant for cycles \((X,\sigma,\gamma,f)\) in an entirely analogous manner, using the end-periodic eta invariant of MRS instead. Of course, we must again be careful with the definition, allowing only the rho invariant for cycles whose twisted operators have discrete spectral sets to be defined in terms of the true end-periodic eta invariants---all others are defined by taking bordant cycles with discrete spectra. We remark also that in the case of positive scalar curvature, the \(h\)-terms appearing in the definition of the rho invariants vanish.

\begin{theorem}
The rho invariant extends to a well-defined homomorphism \[\rho\,(\sigma_1,\sigma_2):\Omega_m^\spin(B\pi)\to\mathbb{R}/\mathbb{Z},\] as does the end-periodic rho invariant \[\rho^\ep(\sigma_1,\sigma_2):\Omega_m^\eps(B\pi)\to\mathbb{R}/\mathbb{Z}.\] Furthermore, the following diagram commutes: \begin{center}\begin{tikzcd}[column sep=small]
\Omega_m^\eps(B\pi) \arrow{dr}[swap]{\rho^\ep(\sigma_1,\sigma_2)} \arrow{rr}{\sim} & & \arrow{ll} \Omega_m^\spin(B\pi) \arrow{dl}{\rho\,(\sigma_1,\sigma_2)} \\
&  \mathbb{R}/\mathbb{Z}   & 
\end{tikzcd}\end{center}
\end{theorem}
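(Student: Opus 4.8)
The plan is to transcribe the proofs of Theorems \ref{eta1} and \ref{thm:RmodZ} almost verbatim, replacing an arbitrary Dirac bundle by the spinor bundle and $K$-cycles by $\Omega^\spin$-cycles; all the analytic input needed --- the APS index theorem, the MRS index theorem (Theorem \ref{thmC,MRS}), and Taubes' discreteness results --- is already at hand. For the classical rho invariant, a spin structure $\sigma$ and a metric on $M$ determine the spin Dirac operator $D$, which I twist by the flat bundles $E_1,E_2$ built from $f,\sigma_1,\sigma_2$ to get $D_1,D_2$, and set $\rho(\sigma_1,\sigma_2;M,\sigma,f)=\eta(D_1)-\eta(D_2)$ as before. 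The only point to check is that this lies in $\mathbb Z$ on boundaries: if $(M,\sigma,f)$ bounds $W$, then after gluing on a collar $[0,1]\times M$ I may assume the metric on $W$ is a product near $\partial W=M$ and that $\sigma,f$ extend, so the spin Dirac operator $D(W)$ of the even-dimensional $W$ bounds $D$ in the sense of \eqref{eq:W-Dirac}, as do its flat twists $D_i(W)$; applying \eqref{eq:APS} to $D_1^+(W)$ and $D_2^+(W)$ and subtracting, the interior integrals cancel since $E_1,E_2$ are flat of equal rank $N$ (both index forms equal $N\cdot{\bf I}(D^+(W))$), leaving $\rho(\sigma_1,\sigma_2;M,\sigma,f)=\Ind_{\mathrm{APS}}D_2^+(W)-\Ind_{\mathrm{APS}}D_1^+(W)\in\mathbb Z$. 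Together with additivity of $\eta$ under disjoint union and $\eta(-D)=-\eta(D)$ for the orientation-reversed cycle (Remark \ref{h-sign}), this gives invariance mod $\mathbb Z$ under bordism, and hence the homomorphism $\rho(\sigma_1,\sigma_2)\colon\Omega_m^\spin(B\pi)\to\mathbb R/\mathbb Z$.

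For the end-periodic rho invariant I follow Theorem \ref{thm:RmodZ}. By the proof of Theorem \ref{thm:bor-isom}, every $\Omega_m^\eps$-cycle $(X,\sigma,\gamma,f)$ is bordant to $(S^1\times Y,1\times\sigma,d\theta,f)$ for $Y$ Poincar\'e dual to $\gamma$, and by Remark \ref{discrete_asssumptions}(1) that product cycle has discrete spectral set for the spin Dirac operator and all its flat twists; so I may define $\rho^\ep(\sigma_1,\sigma_2;X,\sigma,\gamma,f)$ on a general cycle by passing to such a bordant representative, once bordism invariance is established. For bordism invariance: if $(X,\sigma,\gamma,f)$ is a boundary with discrete spectral sets, bounding an end-periodic spin manifold $Z_\infty$, I apply the MRS index theorem (Theorem \ref{thmC,MRS}) to $D_1^+(Z_\infty)$ and $D_2^+(Z_\infty)$; because the twists are by flat bundles of rank $N$, the index forms ${\bf I}(D_i^+(Z))$ and the transgression forms $\omega_i$ are $N$ times those of the untwisted operator, so on subtracting the two index formulas everything cancels except $\rho^\ep(\sigma_1,\sigma_2;X,\sigma,\gamma,f)=\Ind_{\mathrm{MRS}}D_2^+(Z_\infty)-\Ind_{\mathrm{MRS}}D_1^+(Z_\infty)\in\mathbb Z$. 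Additivity under disjoint union and the sign change under negation of cycles then complete invariance mod $\mathbb Z$ under bordism.

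It then remains to treat the orientation/sign relation and the commuting triangle. For orientation/sign, both $(X,\sigma,-\gamma,f)$ and $(-X,\sigma,\gamma,f)$ are carried by the map $\Omega_m^\eps(B\pi)\to\Omega_m^\spin(B\pi)$ to the same cycle $(-Y,\sigma,f)$ (as observed in the construction of the isomorphism of Theorem \ref{thm:bor-isom}), hence both are bordant to $(S^1\times(-Y),1\times\sigma,d\theta,f)$ and therefore have equal $\rho^\ep$ mod $\mathbb Z$; thus $\rho^\ep(\sigma_1,\sigma_2)$ descends to a homomorphism $\Omega_m^\eps(B\pi)\to\mathbb R/\mathbb Z$. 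For the triangle, bordism invariance gives $\rho^\ep(\sigma_1,\sigma_2;X,\sigma,\gamma,f)=\rho^\ep(\sigma_1,\sigma_2;S^1\times Y,1\times\sigma,d\theta,f)$, and by Section 6.3 of \cite{MRS} the right-hand side equals $\rho(\sigma_1,\sigma_2;Y,\sigma,f)$; since $(Y,\sigma,f)$ is the image of $(X,\sigma,\gamma,f)$ under the isomorphism of Theorem \ref{thm:bor-isom}, this is exactly the asserted commutativity.

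I expect no genuine obstacle; the two spots needing a moment's care are arranging a product metric near $\partial W$ in the classical step without disturbing the boundary data (handled by the collar insertion above) and ensuring the MRS index theorem is applicable, i.e.\ that every end-periodic spin bordism class has a representative with discrete spectral set --- which is precisely Remark \ref{discrete_asssumptions}(1) together with Theorem \ref{thm:bor-isom}. A more conceptual packaging, which I might adopt instead, is to observe that $(M,\sigma,f)\mapsto(M,S_\sigma,f)$ and $(X,\sigma,\gamma,f)\mapsto(X,S_\sigma,\gamma,f)$, with $S_\sigma$ the spinor bundle ($\mathbb Z_2$-graded in the even-dimensional case), define natural maps $\Omega_m^\spin(B\pi)\to K_1(B\pi)$ and $\Omega_m^\eps(B\pi)\to K_1^\ep(B\pi)$ intertwining the isomorphisms of Theorems \ref{isom} and \ref{thm:bor-isom}, so that the spin rho invariants are simply the pullbacks of the homomorphisms of Theorems \ref{eta1} and \ref{thm:RmodZ}; the whole statement then follows from the $K$-homology triangle.
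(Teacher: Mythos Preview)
Your proposal is correct and follows essentially the same approach as the paper: apply the APS and MRS index theorems to see the rho invariants vanish mod $\mathbb{Z}$ on boundaries, then invoke the isomorphism of Theorem~\ref{thm:bor-isom}. The paper's proof is a single sentence to this effect, and you have simply unpacked the details (including the handling of orientation/sign and the commuting triangle via Section~6.3 of \cite{MRS}); your alternative conceptual packaging via the natural maps to $K$-homology is a pleasant bonus but not needed.
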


\begin{proof}
Apply the APS and MRS index theorems respectively, and use the isomorphism of Theorem \ref{thm:bor-isom}.
\end{proof}

Now for the positive scalar curvature case.

\begin{theorem}\label{thm:Rmaps}
The rho invariant extends to a well-defined homomorphism \[\rho\,(\sigma_1,\sigma_2):\Omega_m^{\spin,+}(B\pi)\to\mathbb{R},\] as does the end-periodic rho invariant \[\rho^\ep(\sigma_1,\sigma_2):\Omega_m^{\eps,+}(B\pi)\to\mathbb{R}.\] Furthermore, the following diagram commutes: \begin{center}\begin{tikzcd}[column sep=small]
\Omega_m^{\eps,+}(B\pi) \arrow{dr}[swap]{\rho^\ep(\sigma_1,\sigma_2)} \arrow{rr}{\sim} & & \arrow{ll} \Omega_m^{\spin,+}(B\pi) \arrow{dl}{\rho\,(\sigma_1,\sigma_2)} \\
&  \mathbb{R}  & 
\end{tikzcd}\end{center}
\end{theorem}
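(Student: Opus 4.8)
The plan is to follow the template of Theorem~\ref{thm:RmodZ} and of the spin-bordism statement proved just above, the one new ingredient being that on a manifold of positive scalar curvature the Dirac indices occurring in the APS and MRS index formulae vanish rather than merely being integers; consequently both rho invariants vanish on boundaries as elements of \(\RE\), not just of \(\RE/\ZZ\), and hence descend to \(\RE\)-valued homomorphisms.

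For the spin case I would proceed as follows. Let \((M,g,\sigma,f)\) be a boundary, bounded by a compact spin manifold \(W\) with a positive scalar curvature metric that is a product \(dt^{2}+g\) near \(\partial W=M\) and over which \(\sigma\) and \(f\) extend. Choose a Dirac operator \(D(W)\) of the form \eqref{eq:W-Dirac} near the boundary and twist by the rank-\(N\) flat bundles \(E_{1},E_{2}\) to get \(D_{1}(W),D_{2}(W)\) bounding \(D_{1},D_{2}\) on \(M\). Feeding each \(D_{i}^{+}(W)\) into the APS formula \eqref{eq:APS} and subtracting the two identities, the index forms \({\bf I}(D_{i}^{+}(W))=N\,{\bf I}(D^{+}(W))\) cancel and one obtains
\[
\rho\,(\sigma_{1},\sigma_{2}\,;M,\sigma,f)=\eta(D_{1})-\eta(D_{2})=\Ind_{\text{APS}}D_{2}^{+}(W)-\Ind_{\text{APS}}D_{1}^{+}(W).
\]
Now \(W\) has positive scalar curvature everywhere and a product metric near the boundary, so \(M\) has positive scalar curvature and each operator \(D_{i}\) on \(M\) is invertible; the Lichnerowicz identity \(D_{i}(W)^{2}=\nabla^{*}\nabla+\tfrac{s}{4}\) (a flat twist adding no curvature) together with the standard integration by parts then forces the kernel of \(D_{i}^{+}(W)\) and of its formal adjoint, subject to the APS boundary condition and its adjoint, to vanish, so \(\Ind_{\text{APS}}D_{i}^{+}(W)=0\). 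Hence \(\rho\,(\sigma_{1},\sigma_{2}\,;M,\sigma,f)=0\) in \(\RE\). Since \(\rho\) is additive under disjoint union and sign-reversing under orientation reversal, it descends to a homomorphism \(\Omega_{m}^{\spin,+}(B\pi)\to\RE\).

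The end-periodic case is identical with the MRS index theorem in place of APS. Let \((X,g,\sigma,\gamma,f)\) be a boundary, bounded by an end-periodic spin manifold \(Z_{\infty}=Z\cup_{Y}X_{1}\) of positive scalar curvature with product metric near \(Y\). Since \(X\) is spin of positive scalar curvature, Remark~\ref{discrete_asssumptions}(2) shows the twisted families \(D_{i,z}^{+}(X)\) have discrete spectral sets with \(h_{1}=h_{2}=0\), so \(\rho^{\ep}\) is defined by its formula; moreover \(D_{i,z}^{+}(X)\) is invertible for \(|z|=1\) (Lichnerowicz on the \(\ZZ\)-cover of \(X\)), so \(D_{i}^{+}(Z_{\infty})\) is Fredholm already in the unweighted setting, and \(\Ind_{\text{MRS}}D_{i}^{+}(Z_{\infty})\) coincides with the ordinary \(L^{2}\)-index. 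Applying Theorem~\ref{thmC,MRS} to \(D_{1}^{+}(Z_{\infty})\) and \(D_{2}^{+}(Z_{\infty})\) and subtracting, the index forms on \(Z\), the transgression terms \(\omega_{i}=N\omega\) on \(X\) and \(Y\), and the vanishing \(h_{i}\) all drop out, leaving
\[
\rho^{\ep}(\sigma_{1},\sigma_{2}\,;X,\sigma,\gamma,f)=\Ind_{\text{MRS}}D_{2}^{+}(Z_{\infty})-\Ind_{\text{MRS}}D_{1}^{+}(Z_{\infty}).
\]
Because \(Z\) is compact of positive scalar curvature and the end of \(Z_{\infty}\) is periodic, the scalar curvature of \(Z_{\infty}\) is bounded below by a positive constant, so the Lichnerowicz estimate on the complete manifold \(Z_{\infty}\) annihilates all \(L^{2}\) harmonic sections of \(D_{i}^{\pm}(Z_{\infty})\); thus both indices vanish and \(\rho^{\ep}(\sigma_{1},\sigma_{2}\,;X,\sigma,\gamma,f)=0\). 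Additivity and the behaviour under negatives yield the homomorphism \(\rho^{\ep}(\sigma_{1},\sigma_{2}):\Omega_{m}^{\eps,+}(B\pi)\to\RE\).

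For the commuting triangle I would invoke Theorem~\ref{thm:psc-isom}: the isomorphism \(\Omega_{m}^{\eps,+}(B\pi)\cong\Omega_{m}^{\spin,+}(B\pi)\) sends \([X,g,\sigma,\gamma,f]\) to \([Y,g_{Y},\sigma,f]\) and \([M,g,\sigma,f]\) to \([S^{1}\times M,d\theta^{2}+g,1\times\sigma,d\theta,f]\), so in particular \([X,g,\sigma,\gamma,f]=[S^{1}\times Y,d\theta^{2}+g_{Y},1\times\sigma,d\theta,f]\) in \(\Omega_{m}^{\eps,+}(B\pi)\) (the required positive scalar curvature bordism being built by gluing the half-cover of \(X\) associated to \(-\gamma\) to \(\RE_{\geq0}\times Y\) exactly as in the proof of Theorem~\ref{isom}, the product metrics near \(Y\) making the gluing smooth and of positive scalar curvature). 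As \(\rho^{\ep}\) is now a homomorphism, \(\rho^{\ep}(\sigma_{1},\sigma_{2}\,;X,\sigma,\gamma,f)=\rho^{\ep}(\sigma_{1},\sigma_{2}\,;S^{1}\times Y,1\times\sigma,d\theta,f)\), and the right-hand side equals \(\rho\,(\sigma_{1},\sigma_{2}\,;Y,\sigma,f)\) by the product computation of Section~6.3 of \cite{MRS} (with the \(h_{i}\) absent), exactly as in the proof of Theorem~\ref{thm:RmodZ}; this is \(\rho\) applied to the image of \([X,g,\sigma,\gamma,f]\), so the triangle commutes. The step I expect to be the main obstacle is the sharpened vanishing: in the spin case one must verify that on a positive scalar curvature manifold with product boundary metric the flat-twisted APS index is genuinely zero, the boundary term in Green's formula having the right sign precisely because the boundary Dirac operator is invertible; in the end-periodic case one must run the Lichnerowicz estimate on the \emph{non-compact} manifold \(Z_{\infty}\), which is legitimate only because periodicity of the end provides a uniform positive lower bound on the scalar curvature. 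The remaining steps are a direct transcription of the \(\RE/\ZZ\) arguments already given.
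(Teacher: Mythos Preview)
Your argument is correct, but it is organised differently from the paper's. The paper does not prove well-definedness of \(\rho^\ep\) on \(\Omega_m^{\eps,+}(B\pi)\) directly. Instead it first establishes Lemma~\ref{lem:ep-eq}: for any \(\Omega_m^{\eps,+}\)-cycle \((X,g,\sigma,\gamma,f)\) and its image \((Y,g,\sigma,f)\), one has the exact real-number equality \(\rho^\ep(X)=\rho(Y)\). This is proved by forming a two-ended end-periodic manifold (one cylindrical end on \(Y\), one periodic end on \(X\)) and applying MRS together with the Gromov--Lawson vanishing (Lemma~8.1 of \cite{MRS}). Given this lemma, the paper simply cites Botvinnik--Gilkey \cite{BG} for the fact that \(\rho\) is a well-defined real homomorphism on \(\Omega_m^{\spin,+}(B\pi)\), and then observes that \(\rho^\ep=\rho\circ(\text{isomorphism})\) by Lemma~\ref{lem:ep-eq}, which yields both well-definedness and commutativity in one stroke.

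Your route instead proves the two claims separately: you show \(\rho^\ep\) vanishes on boundaries by running the MRS index theorem and Lichnerowicz on \(Z_\infty\), and then deduce commutativity via the bordism \([X]=[S^1\times Y]\) together with the product computation \(\rho^\ep(S^1\times Y)=\rho(Y)\). This is sound---indeed your vanishing step is exactly the content of \cite{MRS} Lemma~8.1---but it does more work than necessary, since the same Lichnerowicz/MRS computation that proves Lemma~\ref{lem:ep-eq} already subsumes both your independent arguments. The paper's organisation has the advantage that the single identity \(\rho^\ep(X)=\rho(Y)\) is the only analytic fact needed beyond what is already in \cite{BG}; your organisation has the advantage of making the end-periodic homomorphism self-contained, without appeal to \cite{BG}.
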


\begin{remark}
	The end-periodic rho invariant appearing in the theorem is given on all representatives of equivalence classes as the genuine difference of the twisted eta invariants as in formula \eqref{E:etapm}, due to Remark \ref{discrete_asssumptions}.
\end{remark}

For the proof, we will need the following (cf. \cite{MRS}, Proposition 8.5 (ii)).

\begin{lemma}\label{lem:ep-eq}
If \((X,g,\sigma,\gamma,f)\) is an \(\Omega_m^{\eps,+}\)-cycle and \((Y,g,\sigma,f)\) is the \(\Omega_m^{\spin,+}\)-cycle it maps to, then \[\rho^\ep(\sigma_1,\sigma_2\,;X,g,\sigma,\gamma,f)=\rho(\sigma_1,\sigma_2\,;Y,g,\sigma,f)\]
\end{lemma}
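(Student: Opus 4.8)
The goal is to show that when an $\Omega_m^{\eps,+}$-cycle $(X,g,\sigma,\gamma,f)$ is mapped to the $\Omega_m^{\spin,+}$-cycle $(Y,g,\sigma,f)$ obtained by restriction to a Poincar\'e-dual submanifold $Y$ with product metric near $Y$, the two rho invariants coincide \emph{on the nose} (not merely mod $\ZZ$). The strategy is to reduce to the computation in Section 6.3 of \cite{MRS} showing that for a product-type end-periodic manifold the end-periodic eta invariant equals the APS eta invariant of the boundary operator. First I would observe that, because $X$ has positive scalar curvature and we are twisting the spin Dirac operator by flat Hermitian bundles $E_1,E_2$, the Lichnerowicz vanishing theorem (as recorded in part (2) of Remark \ref{discrete_asssumptions}) guarantees that $\ker D_i^+(X)=\ker D_i^-(X)=0$, so the spectral sets of the families $D_{i,z}^+(X)$ are discrete and in fact avoid the unit circle; hence both $\rho^\ep(\sigma_1,\sigma_2;X,g,\sigma,\gamma,f)$ and the $h$-terms are genuinely defined, with $h_1=h_2=0$, and $\rho^\ep$ is given by the honest formula $\tfrac12[\eta^\ep(D_1^+(X))-\eta^\ep(D_2^+(X))]$.

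Next I would exploit the freedom in the choice of covering function $F$. By Lemma 8.2 of \cite{MRS} the end-periodic rho invariant is independent of the choice of $F$ among those making the spectral sets discrete, so I may choose $F$ adapted to the product collar $dt^2+g_Y$ around $Y$: concretely, since the metric on $X$ is a product in a tubular neighbourhood of $Y$, one can arrange $dF=dt$ there, i.e. $dF$ restricts to the normal coframe on the collar. With this choice the analysis localizes exactly as in the product case $X=S^1\times Y$: the relevant piece of $X$ near $Y$ looks like a cylinder, the Dirac operator $D(X)$ takes the block form of equation \eqref{eq:W-Dirac} with $D$ the Dirac operator on $Y$, and $c(dF)=c(\partial_t)$ acts as in part (b) of Definition \ref{boundary}. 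I would then invoke the computation of Section 6.3 of \cite{MRS}, which shows $\eta^\ep(D_i^+(X))=\eta(D_i)$ where $D_i$ is the twisted Dirac operator on $Y$; subtracting $i=1$ and $i=2$ gives $\rho^\ep(\sigma_1,\sigma_2;X,g,\sigma,\gamma,f)=\tfrac12[\eta(D_1)-\eta(D_2)]$... wait, the factor: one has $\eta^\ep=\eta(0)$-type normalization versus $\eta(D)=\tfrac{\eta(0)-h}{2}$, so I must be careful to match the normalizations so that the result is precisely $\eta(D_1)-\eta(D_2)=\rho(\sigma_1,\sigma_2;Y,g,\sigma,f)$. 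In the PSC setting $h=0$ and the MRS normalization of $\eta^\ep$ is chosen (see the discussion after \eqref{E:eta-ep} and Section 6.3 of \cite{MRS}) precisely so that $\tfrac12\eta^\ep(D_i^+(X))=\eta(D_i)$ in the product case, giving the clean identity.

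The one subtlety — and the step I expect to need the most care — is verifying that the hypotheses of the MRS product computation genuinely apply: namely that the metric and all structures near $Y$ really do reduce to the cylindrical model after the choice of $F$, including that the induced spin structure and flat bundles $E_i$ on the collar are pulled back from $Y$, and that the orientation/$dF$-sign conventions (the orientation on $Y$ fixed in the paragraph after Definition \ref{ep-isom}, and Remark \ref{h-sign}) are consistent so that no sign is introduced. I would handle this by noting that the definition of an $\Omega_m^{\eps,+}$-cycle builds in exactly the product-collar condition $dt^2+g_Y$ near $Y$, and that the construction of the restriction map $(X,g,\sigma,\gamma,f)\mapsto(Y,g,\sigma,f)$ uses precisely the orientation and spin-structure conventions compatible with \eqref{eq:W-Dirac}; hence the local model near $Y$ is isometric, as a Dirac-bundle-with-covering-function datum, to a genuine $S^1\times Y$ piece, and the MRS product formula applies verbatim. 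This completes the identification $\rho^\ep(\sigma_1,\sigma_2;X,g,\sigma,\gamma,f)=\rho(\sigma_1,\sigma_2;Y,g,\sigma,f)$.
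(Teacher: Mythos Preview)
There is a genuine gap in your approach. The end-periodic eta invariant $\eta^\ep(D_i^+(X))$ is a \emph{global} spectral invariant of the family $D_{i,z}^+(X)=D_i^+(X)-\ln(z)\,c(dF)$ acting on sections over all of $X$; the defining trace in Definition~\ref{def:ep-eta} is taken over $L^2(X,S)$, not over the collar. The computation in Section~6.3 of \cite{MRS} that yields $\eta^\ep(D^+(S^1\times Y))=\eta(D)$ uses the global product structure $X=S^1\times Y$ to separate variables in the Dirac operator and reduce the family $D_z^+$ to the one-parameter family $D+\mu$ on $Y$. A product collar near $Y$ gives you nothing of the sort: away from the collar $D^+(X)$ has no relation to $D$ on $Y$, $dF$ need not equal $dt$, and the spectrum of $D_{i,z}^+(X)$ depends on the full geometry of $X$. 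Your claim that ``the analysis localizes exactly as in the product case'' is therefore unjustified, and the invocation of Section~6.3 of \cite{MRS} for a general $X$ is not valid.

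The paper's proof avoids any direct computation of $\eta^\ep(D_i^+(X))$. Instead it glues the cylindrical end $\mathbb{R}_{\geq0}\times Y$ to the genuine periodic end $X_1=\bigcup_{k\geq0}W_k$ along $Y$ (as in Figure~\ref{fig:epc2}) to form an end-periodic manifold $Z_\infty$ of positive scalar curvature with two ends, one modelled on $(S^1\times Y,d\theta)$ and one on $(X,\gamma)$. By Lemma~8.1 of \cite{MRS} (using Gromov--Lawson), the twisted spin Dirac operators $D_i^+(Z_\infty)$ are Fredholm with index zero. Applying the MRS index theorem to each and subtracting, the index-form and transgression terms cancel as usual, and what remains is exactly $\rho^\ep(\sigma_1,\sigma_2;X,\ldots)-\rho(\sigma_1,\sigma_2;Y,\ldots)=0$. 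The point is that the index theorem does the bridging between the two eta invariants; you cannot simply equate them by a local argument.
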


\begin{proof}
We join \(\mathbb{R}_{\geq0}\times Y\) to \(X_1=\cup_{k\geq0}W_k\) together as in Figure \ref{fig:epc2} to form an end-periodic spin manifold \(Z_\infty\) with two ends. Lemma 8.1 of \cite{MRS} (which uses the results of Gromov-Lawson \cite{GrLaw3}) gives that the spin Dirac operator \(D^+(Z_\infty)\) is Fredholm and has zero index. The same holds for its twisted counterparts. Applying the MRS index theorem to the two twisted spin Dirac operators \(D_1^+(Z_\infty)\) and \(D_2^+(Z_\infty)\), and subtracting the equations as per usual then yields the result.
\end{proof}

\begin{proof}[Proof of Theorem \ref{thm:Rmaps}]
See Theorem 1.1 of Botvinnik-Gilkey \cite{BG} for the proof that the map \(\rho\,(\sigma_1,\sigma_2):\Omega_m^{\spin,+}(B\pi)\to\mathbb{R}\) is well-defined. Lemma \ref{lem:ep-eq} and the isomorphism of Theorem \ref{thm:psc-isom} then immediately imply the result.
\end{proof}

\section{End-periodic structure group}\label{structure}

Let \(\sigma_1,\sigma_2:\pi\to U(N)\) be unitary representations of the discrete group \(\pi\). 
Recall the definition of the structure group $S_1(\sigma_1, \sigma_2)$ of Higson-Roe, starting from Definition 8.7 of \cite{HigsonRoe}.

\begin{definition} An \emph{odd $(\sigma_1, \sigma_2)$-cycle} is a quintuple $(M, S, f, D, n)$ where $(M, S, f)$ is an odd \(K\)-cycle for $B\pi$, $D$ is a Dirac operator for $(M, S, f)$, and $n\in \ZZ$.
\end{definition}

A  $(\sigma_1, \sigma_2)$-cycle $(M, S, f, D, n)$ is a {\em boundary} if the \(K\)-cycle \((M,S,f)\) is a bounded by a manifold \(W\) (as in Definition \ref{boundary}) and there are Dirac operators \(D_1(W)\) and \(D_2(W)\) on \(W\) which bound the twisted Dirac operators \(D_1\) and \(D_2\) on \(M\), such that
$$
\Ind_\text{APS}D_1^+(W) - \Ind_\text{APS}D_2^+(W) = n.
$$

Since we are no longer looking at rho invariants modulo integers or at spin Dirac operators, we will denote by \(\rho(\sigma_1,\sigma_2\,;D,f)\) the rho invariant of definition \ref{def:rho}, indicating its possible dependence on the Dirac operator \(D\).

\begin{lemma}[\cite{HigsonRoe} Lemma 8.10]\label{HR-lemma} If a  $(\sigma_1, \sigma_2)$-cycle $(M, S, f, D, n)$ is a boundary, then
$\rho(\sigma_1, \sigma_2\,;D, f) + n = 0.$
\end{lemma}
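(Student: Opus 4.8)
The plan is to apply the Atiyah--Patodi--Singer index theorem to the two twisted Dirac operators $D_1(W)$ and $D_2(W)$ on the even-dimensional manifold $W$ bounding $(M,S,f)$, and to subtract the resulting formulas, exactly as in the bordism-invariance half of the proof of Theorem~\ref{eta1}. The only difference is that we now keep track of the integer discrepancy of indices rather than discarding it modulo $\ZZ$. First I would write down, for $i=1,2$,
\[
\Ind_\text{APS}D_i^+(W)=\int_W{\bf I}(D_i^+(W))-\eta(D_i),
\]
using that $D_i(W)$ bounds the twisted operator $D_i$ on $M$.

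Next I would observe that $D_1(W)$ and $D_2(W)$ are both obtained from the single Dirac operator $D(W)$ by twisting with the flat Hermitian bundles $E_1,E_2$ of equal rank $N$ associated to $\sigma_1,\sigma_2$ via the extension of $f$ to $W$; hence their index forms coincide with $N\cdot{\bf I}(D^+(W))$, and in particular
\[
\int_W{\bf I}(D_1^+(W))=\int_W{\bf I}(D_2^+(W)).
\]
Subtracting the two index formulas therefore gives
\[
\Ind_\text{APS}D_1^+(W)-\Ind_\text{APS}D_2^+(W)=\eta(D_2)-\eta(D_1)=-\rho(\sigma_1,\sigma_2\,;D,f),
\]
where the last equality is just the definition of the rho invariant (Definition~\ref{def:rho}). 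By the hypothesis that $(M,S,f,D,n)$ is a boundary, the left-hand side equals $n$, so $\rho(\sigma_1,\sigma_2\,;D,f)+n=0$, as claimed.

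The one point that needs a little care, and which I would flag as the main (though still minor) obstacle, is the bookkeeping of the kernel term $h=\dim\ker D$ and the boundary-orientation sign in the eta invariant: by Remark~\ref{h-sign}, the sign attached to $h$ in \eqref{eq:APS} depends on whether the orientation of $M$ agrees with the boundary orientation inherited from $W$. Since the \emph{same} manifold $W$, with the \emph{same} boundary identification, is used to bound both $D_1$ and $D_2$, this sign is identical in the two applications of APS and cancels on subtraction, so no ambiguity survives in the final identity. (If one prefers, one may first perturb $D(W)$ near the collar so that the conventions of \eqref{eq:W-Dirac} hold exactly, as in the setup of the structure-group definition, after which the cancellation is immediate.) Everything else is routine, and the argument is essentially that of \cite{HigsonRoe}, Lemma~8.10.
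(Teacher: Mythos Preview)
Your proposal is correct and is precisely the argument the paper itself uses: the paper does not give a separate proof of this lemma (it simply cites \cite{HigsonRoe}), but the identical APS-subtraction computation appears verbatim in the bordism part of the proof of Theorem~\ref{eta1}, which you rightly invoke. Your handling of the $h$-sign issue via Remark~\ref{h-sign} is also fine, since in the definition of a $(\sigma_1,\sigma_2)$-boundary one has $\partial W=M$ with the compatible orientation from Definition~\ref{boundary}, so no sign discrepancy arises.
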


\begin{definition}  The \emph{relative eta invariant}, or \emph{rho invariant} of the $(\sigma_1, \sigma_2)$-cycle $(M, S, f, D, n)$ is $\rho(\sigma_1, \sigma_2\,;D,f) + n$.
\end{definition}

The {\em disjoint union} of  $(\sigma_1, \sigma_2)$-cycles is defined as,
$$
(M, S, f, D, n) \amalg (M', S', f', D', n')
=(M \amalg M',S \amalg S',f \amalg f',D \amalg D',n +n').
$$ The {\em negative} of a $(\sigma_1, \sigma_2)$-cycle $(M,S,f,D,n)$, is defined as,
$$
-(M,S,f,D,n)=(M,-S,f,-D,h_1 - h_2 -n), 
$$
where $h_1 = \dim \ker(D_1)$ and $h_2 = \dim \ker(D_2)$. Two $(\sigma_1, \sigma_2)$-cycles are {\em bordant} if the disjoint union of one cycle with the negative of the other is a boundary. 

The two remaining relations to define are: \begin{itemize}\setlength\itemsep{2mm}
\item \emph{Direct sum/disjoint union}: \[(M, S \oplus S' , f, D \oplus D' , n)\sim(M\amalg M,S\amalg S',f\amalg f,D\amalg D',n).\]
\item \emph{Bundle Modification}: If $(\hat{M}, \hat{S}, \hat{f})$ is an elementary bundle modification of $(M, S, f)$
with the Dirac operator $\hat{D}$ from \ref{rem:tensorDirac},
then $(M, S, f, D, n)\sim(\hat M, \hat S,\hat f,\hat D, n)$.
\end{itemize}

\begin{definition} The  structure group $S(\sigma_1, \sigma_2)$, is the set of equivalence classes
of $(\sigma_1, \sigma_2)$-cycles under the equivalence relation generated by bordism, direct sum/disjoint union, and bundle modification. It is an abelian group with addition is given by disjoint union.
\end{definition}

In \cite{HigsonRoe} Proposition 8.14, it is proved that the relative eta invariant of a $(\sigma_1, \sigma_2)$-cycle depends only on the class that the cycle determines in $S(\sigma_1, \sigma_2)$. Hence there is a well-defined group homomorphism $ \rho: S(\sigma_1, \sigma_2) \to \RE$, defined by
$$
 \rho(M, S, f, D, n)=\rho(\sigma_1,\sigma_2\,;D,f) + n.
$$

\subsection{End-periodic structure group}

We define in a parallel manner the end-periodic structure group $S_1^\ep(\sigma_1, \sigma_2)$.

\begin{definition}An \emph{odd $(\sigma_1, \sigma_2)^\ep$-cycle} is a sextuple $(X, S, \gamma, f, D, n)$ where $(X, S, \gamma, f)$ is a $K^\ep$-cycle for $B\pi$, $D$ is a Dirac operator for $(X, S,\gamma, f)$, and $n\in \ZZ$. We additionally assume that the spectral set of the family \(D_z^+(X)\) is discrete.
\end{definition}

A  $(\sigma_1, \sigma_2)^\ep$-cycle $(X, S, f,\gamma, D, n)$ is a {\em boundary} 
if the \(K^\ep\)-cycle \((X,S,\gamma,f)\) is a boundary (Definition \ref{ep-boundary}), and moreover there is a Dirac operator $D(Z_\infty)$ on the manifold $Z_\infty$ extending the Dirac operator $D$ on $X_1=\bigcup_{k\geq0}W_k$ such that
the difference of the MRS indices
$$
\Ind_\text{MRS}(D_1^+(Z_\infty)) - \Ind_\text{MRS}(D_2^+(Z_\infty)) = n.
$$
Here the \(D_i^+(Z_\infty)\) are the twists of \(D^+(Z_\infty)\) by the flat vector bundles determined by the extension of \(f\) to \(Z_\infty\) and by \(\sigma_1,\sigma_2\).
We can show the analog of Lemma \ref{HR-lemma}

 \begin{lemma}If a  $(\sigma_1, \sigma_2)^\ep$-cycle $(X, S, \gamma, f, D, n)$ is a boundary, then
$\rho^\ep(\sigma_1, \sigma_2\,;D, f, \gamma) + n = 0.$
\end{lemma}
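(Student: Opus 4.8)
The statement is the end-periodic analogue of Higson--Roe's Lemma \ref{HR-lemma}, and the natural strategy is to mimic the proof of Theorem \ref{eta1} (bordism invariance) but keeping track of the integer $n$ rather than working modulo $\ZZ$. First I would unpack the hypothesis: since $(X,S,\gamma,f,D,n)$ is a boundary, there is an end-periodic manifold $Z_\infty$ with end modelled on $(X,\gamma)$, a $\ZZ_2$-graded Dirac bundle and a map $f$ extending those on $X_1 = \bigcup_{k\geq0}W_k$, and a Dirac operator $D(Z_\infty)$ extending $D$, such that $\Ind_\text{MRS}(D_1^+(Z_\infty)) - \Ind_\text{MRS}(D_2^+(Z_\infty)) = n$. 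The twisted operators $D_i^+(Z_\infty)$ are the twists of $D^+(Z_\infty)$ by the flat bundles $E_i$ attached to $\sigma_i$ via the extended $f$; their restrictions to the periodic end are the twisted operators $D_i^+(X)$ on $X$, whose spectral sets are discrete by the standing assumption in the definition of a $(\sigma_1,\sigma_2)^\ep$-cycle, so that the MRS index theorem (Theorem \ref{thmC,MRS}) applies to each $D_i^+(Z_\infty)$.

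The core of the argument is then to apply Theorem \ref{thmC,MRS} separately to $D_1^+(Z_\infty)$ and $D_2^+(Z_\infty)$, obtaining
\[
\Ind_\text{MRS}D_i^+(Z_\infty) = \int_Z {\bf I}(D_i^+(Z)) - \int_Y \omega_i + \int_X dF\wedge\omega_i - \frac{h_i + \eta^\ep(D_i^+(X))}{2},
\]
where $\omega_i$ is a transgression form with $d\omega_i = {\bf I}(D_i^+(X))$, and then subtracting the two equations. Since $D_1(Z_\infty)$ and $D_2(Z_\infty)$ are twists of the same Dirac operator $D(Z_\infty)$ by \emph{flat} Hermitian bundles of equal rank $N$, the local index form satisfies ${\bf I}(D_i^+(Z)) = N\cdot{\bf I}(D^+(Z))$ on $Z$ and ${\bf I}(D_i^+(X)) = N\cdot{\bf I}(D^+(X))$ on $X$; hence one may take $\omega_1 = \omega_2 = N\omega$ for a single transgression form $\omega$, and all the interior integrals $\int_Z{\bf I}(D_i^+(Z))$, $\int_Y\omega_i$, $\int_X dF\wedge\omega_i$ cancel in the difference. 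What remains is
\[
\Ind_\text{MRS}D_1^+(Z_\infty) - \Ind_\text{MRS}D_2^+(Z_\infty) = -\frac{(h_1 + \eta^\ep(D_1^+(X))) - (h_2 + \eta^\ep(D_2^+(X)))}{2} = -\rho^\ep(\sigma_1,\sigma_2\,;D,f,\gamma),
\]
by the definition of the end-periodic rho invariant. Combining this with the hypothesis $\Ind_\text{MRS}D_1^+(Z_\infty) - \Ind_\text{MRS}D_2^+(Z_\infty) = n$ gives $n = -\rho^\ep(\sigma_1,\sigma_2\,;D,f,\gamma)$, i.e. $\rho^\ep(\sigma_1,\sigma_2\,;D,f,\gamma) + n = 0$, as claimed.

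The one point that requires a little care---and which I expect to be the main obstacle---is the justification that the local index form ${\bf I}(D^+(Z))$ and the transgression form are genuinely unchanged (up to the overall factor $N$) under twisting by a flat bundle, so that a \emph{single} form $\omega$ works simultaneously on the compact piece $Z$ and on the model end $X$ in a way compatible with the choice made inside $Z_\infty$; this is the end-periodic counterpart of the step ``${\bf I}(D_1^+(W)) = {\bf I}(D_2^+(W)) = N\cdot{\bf I}(D^+(W))$'' in the proof of Theorem \ref{eta1}, and it relies on the fact that a flat connection contributes trivially to the Chern--Weil representative of the index density and its transgression. Everything else---the applicability of Theorem \ref{thmC,MRS}, the discreteness of the spectral sets, the identification of the boundary correction with $\rho^\ep$---is immediate from the definitions and results already recorded in the excerpt. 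I would also remark that, exactly as in Higson--Roe, this lemma is precisely what makes the relative eta invariant $\rho^\ep(\sigma_1,\sigma_2\,;D,f,\gamma) + n$ descend to a well-defined homomorphism on the end-periodic structure group $S_1^\ep(\sigma_1,\sigma_2)$.
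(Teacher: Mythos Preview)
Your proposal is correct and follows exactly the approach the paper intends: the paper does not spell out a proof of this lemma, stating only that one ``can show the analog of Lemma~\ref{HR-lemma}'', but the argument you give is precisely the one used in the proof of Theorem~\ref{thm:RmodZ} (apply Theorem~\ref{thmC,MRS} to each twisted operator, use that flat twists leave the index form and transgression class equal to $N$ times the untwisted ones so that the local terms cancel upon subtraction, and identify the surviving boundary correction with $-\rho^\ep$). Your observation about choosing a single transgression form $\omega$ is exactly the point the paper makes there (``both the index form and the transgression classes for the twisted operators are constant multiples of the index form and transgression class of the original operator'').
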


We call the quantity $\rho^\ep(\sigma_1,\sigma_2\,;D, f, \gamma) + n $ the \emph{end-periodic rho invariant} of the 
$(\sigma_1, \sigma_2)^\ep$-cycle $(X, S, \gamma, f, D, n)$.\bigskip

The {\em disjoint union} of  $(\sigma_1, \sigma_2)^\ep$-cycles is defined as
$$
(X, S, f, \gamma, D, n) \amalg (X', S', \gamma', f', D', n')
=(X \amalg X',S \amalg S',\gamma \amalg \gamma',f \amalg f',D \amalg D',n +n').
$$ The {\em negative} of a $(\sigma_1, \sigma_2)^\ep$-cycle $(X,S,\gamma, f,D,n)$, is
$$
-(X,S,\gamma, f,D,n)=(X,S,-\gamma,f,D,h_1 - h_2 -n), 
$$
where $h_1, h_2$ are the integers occurring in the MRS index theorem associated to $\sigma_1, \sigma_2$. Two $(\sigma_1, \sigma_2)^\ep$-cycles are {\em bordant} if the disjoint union of one with the negative of the other is a boundary. We also have: 
\begin{itemize}
\setlength\itemsep{2mm}

\item \emph{Direct sum/disjoint union}: \[(X, S \oplus S' , \gamma+\gamma' f, D \oplus D' , n)\sim(X\amalg M,S\amalg S', \gamma \amalg \gamma',f\amalg f,D\amalg D',n).\]

\item \emph{Bundle Modification}: If $(\hat X, \hat S,\hat \gamma, \hat f)$ is an elementary bundle modification of $(X, S, \gamma, f)$ and \(\hat D\) is the Dirac operator of Remark \ref{rem:tensorDirac},
then $(X, S, \gamma, f, D, n)\sim(\hat X, \hat S,\hat \gamma, \hat f,\hat D, n)$.

\item \emph{Orientation/sign}: \[(X,S,-\gamma,f,D,n)\sim(-X,\Pi(S),\gamma,f,D,n).\]
\end{itemize}

\begin{definition} The  end-periodic structure group, denoted by $S_1^\ep(\sigma_1, \sigma_2)$, is the set of equivalence classes
of $(\sigma_1, \sigma_2)^\ep$-cycles under the equivalence relation generated by bordism, direct sum/disjoint union, bundle modification, and orientation/sign. It is is an abelian group with unit
and addition is given by disjoint union.
\end{definition}

Define the group homomorphism $ \rho^\ep: S_1^\ep(\sigma_1, \sigma_2) \to \RE$
by the formula,
$$
 \rho^\ep(X, S,\gamma, f, D, n)=\rho^\ep(\sigma_1, \sigma_2 \,; D, f,\gamma ) + n.
$$ Then the following theorem is the analog of Theorem \ref{thm:RmodZ} is is proved in a similar way.

\begin{theorem}\label{thm:R}
The end-periodic rho invariant \(\rho^\ep(X,S,\gamma,f, \sigma_1, \sigma_2) +n\) associated to 
the $(\sigma_1, \sigma_2)^\ep$-cycle $(M,S,\gamma, f,D,n)$
depends only on the equivalence class of \((M,S,\gamma, f,D,n)\) in \(S_1^\ep(\sigma_1, \sigma_2)\). 
Hence there is a well-defined group homomorphism \[\rho^\ep:S_1^\ep(\sigma_1, \sigma_2)\to\mathbb{R}.\] 
Furthermore, the following diagram commutes: \begin{center}\begin{tikzcd}[column sep=small]
S_1^\ep(\sigma_1, \sigma_2) \arrow{dr}[swap]{\rho^\ep} \arrow{rr}{\sim} & & \arrow{ll} S_1(\sigma_1, \sigma_2) \arrow{dl}{\rho} \\
&  \mathbb{R}  & 
\end{tikzcd}\end{center}
\end{theorem}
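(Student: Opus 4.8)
The plan is to follow the proof of Theorem~\ref{thm:RmodZ}, lifting every statement from $\mathbb{R}/\mathbb{Z}$ to $\mathbb{R}$ by keeping track of the integer $n$ and replacing $K$-homology by the Higson--Roe structure group. Write $\rho^\ep(X,S,\gamma,f,D,n):=\rho^\ep(\sigma_1,\sigma_2\,;D,f,\gamma)+n$. First I would record the isomorphism $\Phi\colon S_1^\ep(\sigma_1,\sigma_2)\xrightarrow{\ \sim\ }S_1(\sigma_1,\sigma_2)$: it is built exactly as in Theorems~\ref{isom} and~\ref{thm:bor-isom}, with $\Phi$ sending $(X,S,\gamma,f,D,n)$ to $(Y,S^+,f,D_Y,n)$ ($Y$ Poincar\'e dual to $\gamma$, oriented and Clifford-equipped as after Definition~\ref{ep-isom}, $D_Y$ the restriction of $D$) and its inverse sending $(M,S,f,D,n)$ to $(S^1\times M,S\oplus S,d\theta,\widehat{D},n)$ with $\widehat{D}$ of the form~\eqref{eq:W-Dirac}; the integer $n$ is untouched, and since a cylindrical end forces $\Ind_{\text{MRS}}=\Ind_{\text{APS}}$, the two ``boundary with prescribed index difference'' conditions match, so these maps respect all the relations and are mutually inverse just as in the $K$-homology case. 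As in the proof of Theorem~\ref{thm:RmodZ}, discreteness of the spectral set is built into the definition of a $(\sigma_1,\sigma_2)^\ep$-cycle, and any such cycle is bordant, \emph{as a $(\sigma_1,\sigma_2)^\ep$-cycle}, to a product cycle $(S^1\times Y,\,S^+\oplus S^+,\,d\theta,\,\widehat{D}_Y,\,n')$, the integer $n'$ being fixed by the $\Ind_{\text{MRS}}$ of the connecting (two-ended) end-periodic manifold.

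The first substantive step is bordism invariance of the real number $\rho^\ep(X,S,\gamma,f,D,n)$. I would combine three facts. The Lemma stated just above (the end-periodic analogue of \cite{HigsonRoe}, Lemma~8.10) says a boundary has $\rho^\ep(\sigma_1,\sigma_2\,;D,f,\gamma)+n=0$; this is proved by applying the MRS index theorem (Theorem~\ref{thmC,MRS}) to the twisted operators $D_1^+(Z_\infty)$ and $D_2^+(Z_\infty)$ and subtracting, the index forms and transgression terms cancelling since the twisting bundles are flat, which leaves $\rho^\ep(\sigma_1,\sigma_2\,;D,f,\gamma)=\Ind_{\text{MRS}}D_2^+(Z_\infty)-\Ind_{\text{MRS}}D_1^+(Z_\infty)=-n$. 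Secondly, $\rho^\ep(\,\cdot\,)$ is additive under disjoint union, straight from the definitions. Thirdly, since the negative of $(X,S,\gamma,f,D,n)$ is $(X,S,-\gamma,f,D,h_1-h_2-n)$, the statement that ``negative'' negates $\rho^\ep(\,\cdot\,)$ amounts to the identity $\rho^\ep(\sigma_1,\sigma_2\,;D,f,-\gamma)=-\rho^\ep(\sigma_1,\sigma_2\,;D,f,\gamma)-h_1+h_2$, which I would deduce from the effect of replacing $\gamma$ by $-\gamma$ — equivalently $dF$ by $-dF$, equivalently the reparametrisation $z\mapsto z^{-1}$ of the Fourier--Laplace family $D_z^+(X)$ — on the end-periodic eta invariant and on the multiplicities $d_i(z)$ entering $h_i=\sum_{|z|=1}d_i(z)$, in complete parallel with the relation $\eta(-D)=-\eta(D)-h$ used in the proof of Theorem~\ref{eta1}. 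Together these give: if $c\amalg(-c')$ is a boundary then $\rho^\ep(c)=\rho^\ep(c')$.

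It then remains to identify $\rho^\ep(\,\cdot\,)$ with the Higson--Roe invariant under $\Phi$. For a product cycle $(S^1\times M,S\oplus S,d\theta,\widehat{D},n)$, Section~6.3 of \cite{MRS} identifies $\eta^\ep(\widehat{D}_i^+(S^1\times M))$ with $\eta(D_i)$ and the integers $h_i$ with $\dim\ker D_i$, so that $\rho^\ep(\,\cdot\,)$ of this cycle equals $\rho(\sigma_1,\sigma_2\,;D,f)+n$, the Higson--Roe relative eta invariant of the $(\sigma_1,\sigma_2)$-cycle $(M,S,f,D,n)$. Now let $c$ be any $(\sigma_1,\sigma_2)^\ep$-cycle; by the reduction in the first paragraph it is bordant, as a $(\sigma_1,\sigma_2)^\ep$-cycle, to a product cycle representing $\Phi(c)\in S_1(\sigma_1,\sigma_2)$, so bordism invariance together with the previous sentence gives $\rho^\ep(c)=\rho(\Phi(c))$, where $\rho$ on the right is the Higson--Roe homomorphism $S_1(\sigma_1,\sigma_2)\to\mathbb{R}$ of \cite{HigsonRoe}, Proposition~8.14. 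Since that $\rho$ is well-defined on $S_1(\sigma_1,\sigma_2)$ and $\Phi$ descends to equivalence classes, the right-hand side depends only on the class of $c$; this proves simultaneously that $\rho^\ep$ descends to a homomorphism $S_1^\ep(\sigma_1,\sigma_2)\to\mathbb{R}$ and that $\rho^\ep=\rho\circ\Phi$, i.e.\ the triangle commutes.

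The step I expect to be fiddly rather than conceptually hard is the sign and integer bookkeeping in the second paragraph: getting the transformation of $\eta^\ep$ and of the multiplicities $d_i(z)$ under $\gamma\mapsto-\gamma$ exactly right, so that $\rho^\ep(\,\cdot\,)$ changes sign on the nose and the orientation/sign relation is respected over $\mathbb{R}$ rather than only mod $\mathbb{Z}$. Equivalently, if one prefers to route everything through $\Phi$ from the start, the real work is the \emph{exact} comparison $\rho^\ep(\sigma_1,\sigma_2\,;D,f,\gamma)=\rho(\sigma_1,\sigma_2\,;D_Y,f)$ — the analogue of Lemma~\ref{lem:ep-eq} (compare \cite{MRS}, Proposition~8.5(ii)) but now \emph{without} the positive-scalar-curvature hypothesis, so that one applies the MRS index theorem in its weighted, possibly non-Fredholm, form to the two-ended manifold assembled from $X_1$ and $\mathbb{R}_{\geq0}\times Y$, using only discreteness of the spectral set.
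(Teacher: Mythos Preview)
Your proposal is correct and follows essentially the same approach as the paper, which in fact gives no detailed proof at all---it simply states that Theorem~\ref{thm:R} ``is the analog of Theorem~\ref{thm:RmodZ} and is proved in a similar way.'' Your write-up is a faithful and considerably more careful unpacking of that analogy: prove bordism invariance over $\mathbb{R}$ via the MRS index theorem (the Lemma preceding the theorem, the end-periodic analogue of \cite{HigsonRoe} Lemma~8.10), reduce an arbitrary cycle to a product cycle, identify the end-periodic rho of a product with the ordinary Higson--Roe rho via \cite{MRS} \S6.3, and then transport well-definedness back through the isomorphism $\Phi$.
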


Here the maps $S_1^\ep(\sigma_1, \sigma_2)  \leftrightarrow S_1(\sigma_1, \sigma_2) $ are the analog of the maps in \(K\)-homologies
given earlier.

Also, Higson-Roe establish a commuting diagram of short exact sequences, cf. \cite{HigsonRoe} the paragraph below Definition 8.6,

\begin{equation}
\begin{gathered}
\xymatrix{
0\ar[r] &\ZZ\ar[r]\ar[d]^=\ar[r]& S_1(\sigma_1, \sigma_2)\ar[r]\ar[d]^\rho & K_1(B\pi) \ar[d]^{\rho(\sigma_1,\sigma_2)}\ar[r] & 0\\
0\ar[r] &\ZZ \ar[r]\ar[r]& \RE\ar[r]  & \RE/\ZZ  \ar[r] & 0.
}
\end{gathered}
\end{equation}

By Theorems \ref{thm:R} and \ref{thm:RmodZ}, we deduce that there is a commuting diagram of short exact sequences, 

\begin{equation}
\begin{gathered}
\xymatrix{
0\ar[r] &\ZZ\ar[r]\ar[d]^=\ar[r]& S_1^\ep(\sigma_1, \sigma_2)\ar[r]\ar[d]^{\rho^\ep}& K_1^\ep(B\pi) \ar[d]^{\rho^\ep(\sigma_1,\sigma_2)}\ar[r] & 0\\\
0\ar[r] &\ZZ \ar[r]\ar[r]& \RE\ar[r]  & \RE/\ZZ  \ar[r] & 0.\
}
\end{gathered}
\end{equation}
This tells us when the $\RE/\ZZ$-index theorem can be refined to an $\RE$-index theorem.

\section{Applications to positive scalar curvature}\label{PSC}

Using the above isomorphisms of \(K\)-homologies and cobordism theories, we can immediately transfer results on positive scalar curvature from the odd-dimensional case to the even-dimensional case in which a primitive 1-form is given.

\subsection{Odd-dimensional results in the literature}

First we will state the odd-dimensional results that we will be generalising to the even-dimensional case using our isomorphisms. The first ones are obstructions to positive scalar curvature.

\begin{theorem}[Weinberger \cite{Weinberger}, Higson-Roe Theorem 6.9 \cite{HigsonRoe}]\label{HRW-psc}
Let \((M,S,f)\) be an odd \(K\)-cycle for \(B\pi\), where \(M\) is an odd dimensional spin manifold with a Riemannian metric of positive scalar curvature, and \(S\) is the bundle of spinors on $M$. 
Then for any pair of unitary representations \(\sigma_1,\sigma_2:\pi\to U(N)\), the associated rho invariant \(\rho(\sigma_1,\sigma_2\,;M,S,f)\) is a rational number.
\end{theorem}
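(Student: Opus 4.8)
The plan is to reduce the statement to a result about $\R/\Z$-valued homomorphisms on $K$-homology and then exploit finiteness properties of $K_1(B\pi)$ together with the fact that a PSC cycle determines a torsion class. First I would recall that when $M$ carries a PSC metric and $S$ is the spinor bundle, the twisted spin Dirac operators $D_i = D \otimes E_i$ have trivial kernel by the Lichnerowicz formula, so the rho invariant $\rho(\sigma_1,\sigma_2\,;M,S,f)$ is the genuine difference of eta invariants $\eta(D_1) - \eta(D_2)$ with no kernel corrections. By Theorem \ref{eta1}, its mod $\Z$ reduction depends only on the class $[M,S,f] \in K_1(B\pi)$ and on $\sigma_1,\sigma_2$, giving the homomorphism $\rho(\sigma_1,\sigma_2)\colon K_1(B\pi)\to \R/\Z$.

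The key geometric input is that a PSC spin $K$-cycle is \emph{trivial}, in fact bounds, after passing to the Dirac/spin picture: by the Gromov--Lawson--Rosenberg circle of ideas (and as already used in Lemma \ref{lem:ep-eq} via \cite{GrLaw3}), if $M$ admits PSC then some multiple of $[M,S,f]$, or the class itself in a suitable sense, lies in the image of the forgetful map from a bordism group whose PSC version vanishes; more directly, the relevant fact is that the spin Dirac class of a PSC manifold maps to zero under the assembly-type map controlling $\rho(\sigma_1,\sigma_2)$. Concretely, I would argue: the difference $\rho(\sigma_1,\sigma_2\,;M,S,f)$ is computed, via the APS index theorem applied to any spin coboundary $W$ twisted by $E_1$ and $E_2$, as $\Ind_{\mathrm{APS}}D_2^+(W) - \Ind_{\mathrm{APS}}D_1^+(W)$ — an \emph{integer} — whenever $(M,S,f)$ bounds. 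Since $M$ has PSC, one can take $W$ with PSC collar; the relevant statement is that after tensoring the cycle with itself enough times (or using that $K_1(B\pi)$ has no bearing since we only need a multiple), $k\cdot[M,S,f]$ bounds such a $W$ for some positive integer $k$. Then $k\cdot\rho(\sigma_1,\sigma_2\,;M,S,f) \in \Z$, which forces $\rho(\sigma_1,\sigma_2\,;M,S,f) \in \Q$.

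In more detail, the cleanest route is: the class $[M,S,f] \in K_1(B\pi)$ lies in the image of $\Omega_m^{\spin,+}(B\pi) \to K_1(B\pi)$ (the index map from PSC spin bordism), and Theorem \ref{thm:Rmaps} shows $\rho(\sigma_1,\sigma_2)$ lifts on $\Omega_m^{\spin,+}(B\pi)$ to an $\R$-valued homomorphism; but $\Omega_m^{\spin,+}(B\pi)$ is a finitely generated abelian group (being a bordism group of a nice space), so its torsion-free rank is finite and the image of the index map in $K_1(B\pi) \to \R/\Z$ of any torsion class is itself torsion in $\R/\Z$, i.e.\ rational. Since $[M,S,f]$ maps to a torsion class (the PSC condition kills it in the localized/reduced theory), $\rho(\sigma_1,\sigma_2\,;M,S,f)$ is rational. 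Alternatively, and perhaps more self-contained, one uses that $K_1(B\pi)$ need not be finitely generated, so I would instead invoke: there exists $k>0$ with $k[M,S,f] = 0$ in $K_1(B\pi)$ \emph{as a PSC cycle} — i.e.\ $k$ disjoint copies of $(M,S,f)$ bound a PSC spin manifold $W$ — which follows from the surgery-theoretic fact that PSC is preserved and the relevant bordism invariant is finite-order; then APS gives $k\rho \in \Z$.

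The main obstacle is justifying that a positive multiple of the PSC cycle actually bounds in a way that makes the APS computation available, i.e.\ producing the coboundary $W$ (with PSC collar, extended spin structure, and extended map to $B\pi$) realizing $k[M,S,f]=0$. This is exactly where one needs input beyond the formal machinery of the paper — either finite generation of $\Omega_m^{\spin}(B\pi)$ together with rational triviality of the $\widehat{A}$-genus obstruction, or an appeal to the Gromov--Lawson--Rosenberg-type results already cited in \cite{GrLaw3} and used elsewhere in the paper. Everything after that ($\eta(D_i)$ has no kernel term by Lichnerowicz; the APS index difference is an integer because the index forms $\mathbf{I}(D_i^+(W)) = N\cdot\mathbf{I}(D^+(W))$ cancel; hence $k\rho \in \Z$) is routine and parallels the bordism-invariance argument in the proof of Theorem \ref{eta1}.
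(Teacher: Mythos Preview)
The paper does not supply its own proof of this theorem; it is quoted as a known result of Weinberger and Higson--Roe and then used as input for the even-dimensional analogues. So there is no ``paper's proof'' to compare against, and your proposal must be judged on its own merits.

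Your argument has a genuine gap. The entire strategy rests on producing a positive integer $k$ such that $k\cdot[M,S,f]$ bounds (either in $K_1(B\pi)$ or in $\Omega_m^{\spin,+}(B\pi)$), after which the APS index formula would force $k\rho\in\Z$. But none of the reasons you offer for this actually hold. The bordism group $\Omega_m^{\spin,+}(B\pi)$ is not in general finitely generated when $\pi$ is infinite, so you cannot conclude torsion from finite generation. More fundamentally, positive scalar curvature does \emph{not} make the $K$-homology class $[M,S,f]$ torsion: it kills the image of this class under the index/assembly map to $K_*(C^*\pi)$, which is a much weaker statement and does not by itself give you a coboundary for any multiple of the cycle. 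The ``surgery-theoretic fact'' you invoke does not exist in the form you need.

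The actual argument (due to Atiyah--Patodi--Singer and Weinberger) bypasses bordism entirely and in fact does not use PSC. By APS~III, the mod~$\Z$ reduction of $\rho(\sigma_1,\sigma_2\,;M,S,f)$ is the $\R/\Z$-valued index pairing of the $K$-homology class $[D]$ with the class of the virtual flat bundle $[E_1]-[E_2]$ in $K^0(M;\R/\Z)$. Because a flat bundle has vanishing Chern character in positive degrees, the image of $[E_1]-[E_2]$ in $K^0(M;\R)$ is zero, and so its $\R/\Z$-class lifts to $K^0(M;\Q/\Z)$. The pairing therefore lands in $\Q/\Z$, giving $\rho\in\Q$. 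The PSC hypothesis is stated only because it is the setting for the subsequent integrality and vanishing theorems; it plays no role in the rationality itself.
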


\begin{theorem}[Higson-Roe Remark 6.10 \cite{HigsonRoe}]\label{HR-psc}
Let \((M,S,f)\) be an odd \(K\)-cycle for \(B\pi\), where \(M\) is an odd dimensional spin manifold with a Riemannian metric of positive scalar curvature, and \(S\) is the bundle of spinors on $M$. 
If the maximal Baum-Connes map for \(\pi\) is injective, 
then for any pair of unitary representations \(\sigma_1,\sigma_2:\pi\to U(N)\), the associated rho invariant \(\rho(\sigma_1,\sigma_2\,;M,S,f)\) is an integer.
\end{theorem}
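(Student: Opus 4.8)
The plan is to show that the hypotheses force the $K$-homology class $[M,S,f]$ to vanish in $K_1(B\pi)$, and then to read off the conclusion from Theorem~\ref{eta1}. The only genuine input is the Lichnerowicz argument; everything else is formal.

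First I would record what positive scalar curvature buys us. Since $M$ is spin, $S$ is its spinor bundle, and $\kappa_g>0$, the Lichnerowicz--Weitzenb\"ock identity $D^2=\nabla^*\nabla+\kappa_g/4$ makes $D$ invertible; a \emph{flat} twist contributes no curvature term to the Weitzenb\"ock remainder, so the same holds after twisting by any flat unitary bundle (in particular $D_1$ and $D_2$ are invertible) and, crucially, after twisting by the flat $C^*_{\max}\pi$-Hilbert-module bundle $\tilde M\times_\pi C^*_{\max}\pi$. Hence the Mishchenko--Fomenko (Rosenberg) index of $M$ vanishes in $K_1(C^*_{\max}\pi)$.

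Next I would invoke the standard index-theoretic dictionary. The Baum--Douglas cycle $(M,S,f)$ represents $f_*[M]\in K_1(B\pi)$, where $[M]\in K_1(M)$ is the spin fundamental class, and the maximal assembly map $\mu_{\max}\colon K_1(B\pi)\to K_1(C^*_{\max}\pi)$ sends this class to exactly the Rosenberg index just shown to be zero; injectivity of $\mu_{\max}$ then gives $[M,S,f]=0$ in $K_1(B\pi)$. By Theorem~\ref{eta1} the rho invariant descends to a group homomorphism $\rho(\sigma_1,\sigma_2)\colon K_1(B\pi)\to\mathbb{R}/\mathbb{Z}$ whose value on $[M,S,f]$ is the $\bmod\,\mathbb{Z}$ reduction of $\rho(\sigma_1,\sigma_2;M,S,f)$; since that value is $0$, the real number $\rho(\sigma_1,\sigma_2;M,S,f)$ is an integer. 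If $M$ is disconnected with components of different odd dimensions one argues componentwise and adds, since fundamental classes, assembly, and $\rho$ are all additive. Unlike the rationality statement of Theorem~\ref{HRW-psc} (Weinberger), this argument passes through no denominator bound.

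The main obstacle, and really the only substantive point, is the middle step: that positive scalar curvature kills the assembled class in $K_1(C^*_{\max}\pi)$. Making this precise requires the Mishchenko--Fomenko index calculus over the maximal group $C^*$-algebra together with a check that Lichnerowicz positivity survives the passage from a finite-dimensional flat bundle to a flat Hilbert-module bundle; the reason the \emph{maximal} rather than the reduced completion is forced on us here is exactly that the flat bundles attached to arbitrary unitary representations $\sigma_i$ only extend $\mathbb{C}[\pi]$ to $C^*_{\max}\pi$. Since this is nothing but the classical odd-dimensional Higson--Roe statement, one could also simply cite \cite{HigsonRoe}; the even-dimensional version we are ultimately after will then be obtained by transporting it across the isomorphism $K_1(B\pi)\cong K_1^\ep(B\pi)$ of Theorem~\ref{isom}.
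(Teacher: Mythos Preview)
The paper does not give its own proof of this theorem: it is stated in Section~6.1 as a result from the literature (Higson--Roe, Remark~6.10) and then used as a black box to derive the even-dimensional analog. Your sketch is correct and is precisely the standard Higson--Roe argument---Lichnerowicz kills the Rosenberg index in $K_1(C^*_{\max}\pi)$, injectivity of the maximal assembly map then forces $[M,S,f]=0$ in $K_1(B\pi)$, and Theorem~\ref{eta1} finishes---so you have simply unpacked what the paper chooses to cite.
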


\begin{remarks}
The maximal Baum-Connes map for \(\pi\) is injective whenever for instance $\pi$ is a torsion-free linear discrete group, \cite{GHW}.
\end{remarks}

\begin{theorem}[Higson-Roe Theorem 1.1 \cite{HigsonRoe}, Keswani \cite{Keswani00}]\label{HRK-psc}
Let \((M,S,f)\) be an odd \(K\)-cycle for \(B\pi\), where \(M\) is an odd dimensional spin manifold with a Riemannian metric of positive scalar curvature, and \(S\) is the bundle of spinors on $M$. 
If the maximal Baum-Connes conjecture holds for \(\pi\), 
then for any pair of unitary representations \(\sigma_1,\sigma_2:\pi\to U(N)\), the associated rho invariant \(\rho(\sigma_1,\sigma_2\,;M,S,f)\) is zero.
\end{theorem}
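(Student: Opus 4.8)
The statement is the theorem of Higson--Roe \cite{HigsonRoe} (resting on Keswani \cite{Keswani00}), and my plan is to follow their route through $K$-homology, the structure group of Section \ref{structure}, and the $K$-theory of group $C^*$-algebras. Throughout, let $D$ be a choice of spin Dirac operator for $(M,S,f)$.

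First I would pass to the structure group. The data $(M,S,f,D,0)$ is automatically an odd $(\sigma_1,\sigma_2)$-cycle, and by construction its structure-group invariant is $\rho(\sigma_1,\sigma_2\,;D,f)+0=\eta(D_1)-\eta(D_2)=\rho(\sigma_1,\sigma_2\,;M,S,f)$. So it suffices to show that the class of $(M,S,f,D,0)$ in $S_1(\sigma_1,\sigma_2)$ is sent to $0\in\RE$ by the homomorphism $\rho$.

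Next I would feed in positive scalar curvature. Since $S$ is the bundle of spinors and $\kappa>0$, the Lichnerowicz--Weitzenb\"ock identity $D^2=\nabla^*\nabla+\kappa/4$---which is unaffected by twisting with a flat Hermitian coefficient bundle---shows that $D$, $D_1$ and $D_2$ are all invertible; applied to the $C^*_{\max}\pi$-linear spin Dirac operator on the (flat) Mishchenko bundle it shows that operator is invertible as well, so its higher index vanishes in $K_1(C^*_{\max}\pi)$. As this higher index is the image of $[M,S,f]$ under the maximal assembly map $\mu:K_1(B\pi)\to K_1(C^*_{\max}\pi)$, we get $\mu[M,S,f]=0$. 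If $\mu$ is merely injective this already forces $[M,S,f]=0$ in $K_1(B\pi)$, whence $\rho(\sigma_1,\sigma_2\,;M,S,f)\equiv 0\bmod\ZZ$ by Theorem \ref{eta1}, i.e. $\rho$ is an integer---this recovers Theorem \ref{HR-psc}. Equivalently, $(M,S,f,D,0)$ lies in the image of $\ZZ\hookrightarrow S_1(\sigma_1,\sigma_2)$, so equals an integer multiple $m$ of the canonical generator, and $\rho(\sigma_1,\sigma_2\,;M,S,f)=m$.

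The genuinely new content is that $m=0$ when $\mu$ is also surjective, and this is the step I expect to be hard. Here the plan is to extend the ladder of short exact sequences from Section \ref{structure} downwards,
\[
\begin{gathered}
\xymatrix{
0\ar[r]&\ZZ\ar[r]\ar[d]^=&S_1(\sigma_1,\sigma_2)\ar[r]\ar[d]&K_1(B\pi)\ar[r]\ar[d]^{\mu}&0\\
0\ar[r]&\ZZ\ar[r]\ar[d]^=&K_0(\mathcal A)\ar[r]\ar[d]^{\tau}&K_1(C^*_{\max}\pi)\ar[r]\ar[d]^{w}&0\\
0\ar[r]&\ZZ\ar[r]&\RE\ar[r]&\RE/\ZZ\ar[r]&0,
}
\end{gathered}
\]
in which $\mathcal A$ is a $C^*$-algebra built from $C^*_{\max}\pi$ and the induced $*$-homomorphism $C^*_{\max}\pi\to M_N(\C)\oplus M_N(\C)$ by a mapping-cone construction, $w$ is the difference of the Fuglede--Kadison-type determinants attached to $\tr\circ\sigma_1$ and $\tr\circ\sigma_2$, and $\tau$ is the resulting real-valued homomorphism, normalised so that $\tau|_\ZZ=\mathrm{id}$. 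The technical heart---and the principal obstacle---is commutativity of the lower half of this diagram: that the composite of the middle verticals is exactly $\rho$ and the composite of the right verticals is $\rho(\sigma_1,\sigma_2)$. This is a repackaging of the Atiyah--Patodi--Singer $\RE/\ZZ$-index theorem \cite{APS3} through Melrose's $b$-trace \cite{Melrose}, in the spirit of \cite{MRS}, and one must be careful with normalisations so that the two copies of $\ZZ$ genuinely match. Granting this, surjectivity of $\mu$ and the five lemma make $S_1(\sigma_1,\sigma_2)\to K_0(\mathcal A)$ an isomorphism, so it remains to show, using the invertibility of all the operators involved and an explicit determinant computation for the positive-scalar-curvature Dirac operator, that the analytic class of $(M,S,f,D,0)$ vanishes in $K_0(\mathcal A)$; applying $\tau$ then gives $m=\rho(\sigma_1,\sigma_2\,;M,S,f)=0$. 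It is precisely this last identification---transporting the analytic vanishing back to the geometric structure group---that needs the full Baum--Connes isomorphism rather than injectivity alone, and hence upgrades the ``integer'' of Theorem \ref{HR-psc} to ``zero''.
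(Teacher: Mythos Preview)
The paper does not give a proof of this theorem at all: it is stated in Section~\ref{PSC} under the heading ``Odd-dimensional results in the literature'' with attribution to Higson--Roe and Keswani, and is then used as a black box to deduce the even-dimensional analogue via the isomorphism \(K_1(B\pi)\cong K_1^\ep(B\pi)\). So there is no ``paper's own proof'' to compare your proposal against.

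Taken on its own terms, your outline correctly reproduces the broad architecture of the Higson--Roe argument. Steps one through three (passage to the structure group, Lichnerowicz vanishing of the higher index, and injectivity of \(\mu\) giving integrality) are accurate. The ladder you draw for step four, with the mapping-cone algebra \(\mathcal A\) and the trace \(\tau\), is also the right machinery. However, your final move is misformulated: you assert that ``the analytic class of \((M,S,f,D,0)\) vanishes in \(K_0(\mathcal A)\)'' and then apply \(\tau\). This is neither what is proved nor what is needed. The class in \(K_0(\mathcal A)\) need not be zero; what Higson--Roe establish is that \(\tau\) of this class is zero. The mechanism is that invertibility of the twisted Dirac operators (from PSC) provides a canonical representative whose relative trace is computed directly to be zero---there is no ``explicit determinant computation'' for the Dirac operator in the sense you suggest, and the five-lemma isomorphism \(S_1(\sigma_1,\sigma_2)\to K_0(\mathcal A)\) is used to transport the trace identity back to \(\rho\), not to argue vanishing of a \(K\)-theory class. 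So the genuine gap is in the last paragraph: you have identified the correct objects but inverted the logic of how surjectivity of \(\mu\) and invertibility conspire to give \(\rho=0\).
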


\begin{remarks}
The maximal Baum-Connes conjecture holds for $\pi$ whenever $\pi$ is \(K\)-amenable.
\end{remarks}

We now turn to a result on the number of path components of the moduli space of PSC metrics modulo diffeomorphism, \(\fM^+(M)\). Denote for a group \(\pi\), the representation ring \(R(\pi)\) consisting of formal differences of finite dimensional unitary representations, and let \(R_0(\pi)\) be those formal differences with virtual dimension zero (an element of \(R_0(\pi)\) can be thought of as an ordered pair of unitary representations \(\sigma_1,\sigma_2:\pi\to U(N)\)). Following Botvinnik and Gilkey \cite{BG}, introduce the subgroups \[R_0^\pm(\pi)=\{\alpha\in R_0(\pi):\tr(\alpha(\lambda))=\pm\tr(\alpha(\lambda^{-1}))\text{ for all } \lambda\in\pi\}\] and define \[r_m(\pi)=\begin{cases}
\rank_\mathbb{Z}R_0^+(\pi) & \text{ if }\quad m=3 \mod 4, \\
\rank_\mathbb{Z}R_0^-(\pi) & \text{ if }\quad m=1 \mod 4 .
 \end{cases}\] The following is a result of Botvinnik and Gilkey on the number of path components of the moduli space of PSC metrics modulo diffeomorphism.

\begin{theorem}[Botvinnik-Gilkey Theorem 0.3 \cite{BG}]\label{thm:BG}
Let \(M\) be a compact connected spin manifold of odd dimension \(m\geq5\) admitting a metric of positive scalar curvature. Suppose that \(\pi=\pi_1(M)\) is finite and nontrivial, and that \(r_m(\pi)>0\). Then the moduli space of PSC metrics modulo diffeomorphism \(\fM^+(M)\) has infinitely many path components.
\end{theorem}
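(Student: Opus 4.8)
\emph{The plan is to} follow Botvinnik--Gilkey \cite{BG}: detect path components of $\fM^+(M)$ by the real-valued rho invariants of Theorem \ref{thm:Rmaps}. (The end-periodic apparatus is not needed for this odd-dimensional statement; it enters only in the even-dimensional analogue.) Fix a spin structure $\sigma$ on $M$ and the classifying map $u\colon M\to B\pi$, so that each positive scalar curvature (PSC) metric $g$ on $M$ gives an $\Omega_m^{\spin,+}$-cycle $(M,g,\sigma,u)$. First I would note that a path $g_t$ of PSC metrics, reparametrised to vary slowly and to be constant near the endpoints, yields a PSC metric $dt^2+g_t$ on $M\times[0,1]$ that is a product near the boundary; with the pulled-back spin structure and map this realises $(M,g_0,\sigma,u)$ and $(M,g_1,\sigma,u)$ as bordant in $\Omega_m^{\spin,+}(B\pi)$. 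Hence by Theorem \ref{thm:Rmaps} the assignment $g\mapsto\rho(\sigma_1,\sigma_2\,;M,g,\sigma,u)$ is constant on path components of the space $\mathcal R^+(M)$ of PSC metrics, for every pair $\sigma_1,\sigma_2\colon\pi\to U(N)$. Assembling over $\alpha=\sigma_1-\sigma_2$ gives $\rho\colon\pi_0\mathcal R^+(M)\to\Hom(R_0(\pi),\R)$; the symmetry $\rho(\bar\alpha\,;M,g,\sigma,u)=\pm\rho(\alpha\,;M,g,\sigma,u)$ (sign according to $m\bmod 4$) forces $\rho$ to vanish on $R_0^{-\epsilon}(\pi)$, so its content lives on $R_0^\epsilon(\pi)$, with $\epsilon$ as in the definition of $r_m(\pi)$.

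Next I would pass to $\fM^+(M)=\mathcal R^+(M)/\mathrm{Diff}(M)$. A diffeomorphism $\phi$ of $M$ identifies the cycle $(M,\phi^*g,\sigma,u)$ with $(M,g,\phi_*\sigma,u\circ\phi^{-1})$, and since $\phi$ induces an automorphism of $\pi=\pi_1(M)$, the map $u\circ\phi^{-1}$ is freely homotopic to $\psi\circ u$ where $\psi\colon B\pi\to B\pi$ realises that automorphism. Thus $\rho(\alpha\,;M,\phi^*g,\sigma,u)$ equals the rho invariant of $(M,g,\sigma,u)$ evaluated on an automorphism-twist of $\alpha$ and a possibly different spin structure. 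As $\pi$ is finite, $\mathrm{Out}(\pi)$ is finite and $M$ carries only finitely many spin structures, so these values all lie in a single orbit of a finite group $G$ acting linearly on $\Hom(R_0(\pi),\R)$. Since every $G$-orbit is finite, the image in $\Hom(R_0(\pi),\R)/G$ of any infinite set of pairwise distinct rho homomorphisms remains infinite. It therefore suffices to produce infinitely many PSC metrics on $M$ with pairwise distinct rho homomorphisms.

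The construction of such metrics is where the hypothesis $r_m(\pi)>0$ is used, and is the heart of the argument. Pick $0\neq\alpha_0\in R_0^\epsilon(\pi)$. Gilkey's evaluation of the eta invariants of linear space forms (see \cite{BG} and its references) produces a closed PSC spin $m$-manifold $(N,h)$ with a map $v\colon N\to B\pi$ such that $\rho(\alpha_0\,;N,h,\sigma_N,v)\neq0$, and with $N$ chosen so that $j[N,v]=0$ in $\Omega_m^{\spin}(B\pi)$ for some $j>0$. For each $k\geq1$ I would graft $jk$ disjoint copies of $N$ onto $(M,g_0)$: form the iterated connected sum, which inherits a PSC metric by Gromov--Lawson \cite{GrLaw3} ($0$-surgery, codimension $m\geq5$), and then perform $1$-surgeries (codimension $m-1\geq4$, hence PSC-preserving) along loops $a_ib_i^{-1}$, where the $a_i$ generate the free factors $\pi_1(N)$ of the connected sum and $b_i$ is the image of $a_i$ in $\pi$ under $v$. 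This kills the free factors and yields a PSC metric on a spin manifold $M'_k$ with $\pi_1(M'_k)=\pi$; all surgeries extend compatibly over the spin structure and over $B\pi$, so the surgery trace is a PSC spin bordism over $B\pi$ from $(M,g_0,\sigma,u)$ together with $jk$ copies of $(N,h,\sigma_N,v)$ to $M'_k$, whence $[M'_k]=[M]+jk[N]=[M]$ as classes over $B\pi$ in $\Omega_m^{\spin}(B\pi)$. The surgery-theoretic part of \cite{BG}, where $m\geq5$ and the finiteness of $\pi$ are essential, then arranges the construction so that $M'_k$ is diffeomorphic to $M$; pulling the metric back along such a diffeomorphism produces PSC metrics $g_k$ on $M$ with
\[
\rho(\alpha_0\,;M,g_k,\sigma,u)=\rho(\alpha_0\,;M,g_0,\sigma,u)+jk\,\rho(\alpha_0\,;N,h,\sigma_N,v)
\]
up to the finite ambiguity of the previous paragraph. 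These values are pairwise distinct since $\rho(\alpha_0\,;N,h,\sigma_N,v)\neq0$, so together with the first two paragraphs $\fM^+(M)$ has infinitely many path components.

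The main obstacle is the third paragraph, and it has two faces. First, producing the detecting manifold $(N,h,v)$ rests on Gilkey's explicit computation of the eta invariants of space forms and on the fact that, for finite $\pi$, such space forms realise a sublattice of $R_0^\epsilon(\pi)$ of rank exactly $r_m(\pi)$. Second, the grafting must simultaneously preserve positive scalar curvature, the fundamental group, the spin structure and the reference map to $B\pi$, and must finally recover the \emph{original} smooth manifold $M$ via surgery theory --- this is the step that genuinely uses $m\geq5$ and $\pi$ finite. By contrast, the first two paragraphs are formal consequences of the bordism invariance in Theorem \ref{thm:Rmaps} together with the finiteness of $\pi$.
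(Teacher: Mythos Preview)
The paper does not prove this theorem; it is quoted from Botvinnik--Gilkey \cite{BG}, and the paper offers only the two-sentence summary following the statement: one finds a countable family of PSC metrics $g_i$ on $M$ with pairwise distinct rho invariants, and observes that metrics in the same PSC path component are PSC-bordant and hence have equal rho invariants. Your proposal is a faithful and considerably more detailed expansion of exactly this strategy, so it is consistent with what the paper records.

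Since the paper gives no further argument, let me comment briefly on your reconstruction versus the original in \cite{BG}. Your first two paragraphs (bordism invariance of $\rho$ on $\Omega_m^{\spin,+}(B\pi)$, and passage to $\fM^+(M)$ via the finite ambiguity coming from $\mathrm{Out}(\pi)$ and the finite set of spin structures) are correct and match the Botvinnik--Gilkey framework. In the third paragraph you have the right architecture: a detecting cycle $(N,h,v)$ built from lens spaces/space forms with nonzero $\rho(\alpha_0)$ and torsion image in $\Omega_m^{\spin}(B\pi)$, then PSC-preserving $0$- and $1$-surgeries to graft copies of $N$ onto $M$ while controlling $\pi_1$ and the map to $B\pi$, and finally an appeal to surgery theory to identify the result with $M$. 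One point to tighten: Botvinnik--Gilkey do not invoke abstract surgery theory to recover $M$; rather, they arrange from the outset that the glued-on piece is \emph{spin null-bordant over $B\pi$} (hence PSC null-bordant by Gromov--Lawson/Miyazaki--Rosenberg), and then use the PSC surgery lemma across that null-bordism to carry the new PSC metric back to the original $M$. Your formulation ``the surgery-theoretic part of \cite{BG} \ldots arranges \ldots $M'_k$ diffeomorphic to $M$'' is not wrong in spirit, but the actual mechanism is bordism plus the Gromov--Lawson surgery theorem rather than the $s$-cobordism/surgery exact sequence. With that adjustment, your outline matches the cited source.
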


Their proof involves finding a countably indexed family of metrics \(g_i\) of positive scalar curvature on \(M\) so that \(\rho(M,g_i)\neq\rho(M,g_j)\) for \(i\neq j\). If these metrics were homotopic through PSC metrics, then they would lie in the same PSC bordism class and hence have equal rho invariants. We will extend this result to the even-dimensional case under the additional hypothesis of `psc-adaptability'; see Definition \ref{def:psc-ad}.

\subsection{Our even dimensional results}

In the following theorems, we assume that $Y$ is a submanifold of $X$ that is  Poincar\'e dual  to a primitive class $\gamma \in  H^1(X, \ZZ)$ 
such that the scalar curvature of $Y$ in the induced metric is positive. By a theorem of \cite{SY}, if $\dim(X)=n\le 7$, then every homology class in $H_{n-1}(X, \ZZ)$
has a representative that is a smooth, orientable minimal hypersurface. It follows that if $X$ is spin with positive scalar curvature, 
then Poincar\'e dual  to a primitive class $\gamma \in  H^1(X, \ZZ)$
can be chosen to be a smooth, spin minimal hypersurface $Y$, and it follows that the scalar curvature of $Y$ in the induced metric is positive. So our assumption in the Theorems below are automatically true when $\dim(X)=n\le 7$.

The following is our even dimensional analog of Theorem \ref{HRW-psc}.

\begin{theorem}\label{thm:ev}
Let \((X,S,\gamma, f)\) be an odd \(K^\ep\)-cycle for \(B\pi\), where \(X\) is an even dimensional spin manifold with a Riemannian metric of positive scalar curvature, \(S\) is the bundle of spinors on $X$ and $\gamma$ a primitive class in $H^1(X, \ZZ)$ such that there is a Poincar\'e dual submanifold $Y$
whose scalar curvature in the induced metric is positive. 
Then for any pair of unitary representations \(\sigma_1,\sigma_2:\pi\to U(N)\), the associated end-periodic rho invariant \(\rho^\ep(\sigma_1, \sigma_2\,;X,S,\gamma, f)\) is a rational number.
\end{theorem}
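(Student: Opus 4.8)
The plan is to transfer the odd-dimensional rationality theorem of Weinberger and Higson--Roe (Theorem \ref{HRW-psc}) to the even-dimensional setting, using the isomorphism \(K_1^\ep(B\pi)\cong K_1(B\pi)\) of Theorem \ref{isom} together with the commuting triangle of Theorem \ref{thm:RmodZ}. The point is that the end-periodic rho invariant of \((X,S,\gamma,f)\) is, modulo \(\Z\), the ordinary rho invariant of a hypersurface \(Y\subset X\) dual to \(\gamma\), and that \(Y\) can be chosen to carry positive scalar curvature, whence Theorem \ref{HRW-psc} applies to it directly.

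First I would push the \(K^\ep\)-cycle \((X,S,\gamma,f)\) forward under the map \(K_1^\ep(B\pi)\to K_1(B\pi)\). By the construction of that map in Section \ref{end-per-K}, its image is the \(K\)-cycle \((Y,S^+,f)\), where \(Y\subset X\) is a submanifold Poincar\'e dual to \(\gamma\) (oriented as there), \(S^+\) is the restriction of the positive part of \(S\), and the Clifford module structure on \(S^+\) is \(c_Y(v)=c_X(\partial_t)c_X(v)\). Since \(S\) is the spinor bundle of the spin manifold \(X\), this restriction is the spinor bundle of the induced spin structure on the hypersurface \(Y\) --- the standard identification of spinors along a codimension-one submanifold. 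Independence of this \(K\)-homology class from the choice of \(Y\) is part of the well-definedness of the map \(K_1^\ep(B\pi)\to K_1(B\pi)\) established in Section \ref{end-per-K}, so I am free to take \(Y\) to be a Poincar\'e dual submanifold whose induced metric has positive scalar curvature; such a \(Y\) exists by hypothesis, and automatically when \(\dim X\le 7\) by the minimal hypersurface theorem of Schoen--Yau \cite{SY}. Thus \((Y,S^+,f)\) is an odd \(K\)-cycle for \(B\pi\) with \(Y\) an odd-dimensional spin manifold of positive scalar curvature and \(S^+\) its spinor bundle, so Theorem \ref{HRW-psc} applies and yields \(\rho(\sigma_1,\sigma_2\,;Y,S^+,f)\in\mathbb{Q}\).

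The conclusion then follows from the commuting triangle of Theorem \ref{thm:RmodZ}: the reduction mod \(\Z\) of \(\rho^\ep(\sigma_1,\sigma_2\,;X,S,\gamma,f)\) equals the reduction mod \(\Z\) of \(\rho(\sigma_1,\sigma_2\,;Y,S^+,f)\). Here \(\rho^\ep(\sigma_1,\sigma_2\,;X,S,\gamma,f)\) is a genuine real number, not merely a class mod \(\Z\): the spectral sets of the twisted operators \(D_1^+(X),D_2^+(X)\) are discrete by Lichnerowicz vanishing (Remark \ref{discrete_asssumptions}(2)) and the \(h\)-terms vanish in the positive scalar curvature case, as recorded after Theorem \ref{thm:Rmaps}. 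A real number whose class mod \(\Z\) agrees with that of a rational number is itself rational, so \(\rho^\ep(\sigma_1,\sigma_2\,;X,S,\gamma,f)\in\mathbb{Q}\). The only step carrying genuine content is the spinor identification along \(Y\) --- checking that restricting the spin Dirac bundle of \(X\) and its Clifford structure to \(Y\) reproduces the spinor bundle of \(Y\), so that Theorem \ref{HRW-psc} is truly applicable; the rest is bookkeeping. I note that one could instead deform the metric of \(X\) near \(Y\) to a product \(dt^2+g_Y\) while preserving positive scalar curvature (Gromov--Lawson \cite{GrLaw3}, as in Lemma \ref{lem:ep-eq}), turning \((X,g,\sigma,\gamma,f)\) into an \(\Omega_m^{\eps,+}\)-cycle, and then Lemma \ref{lem:ep-eq} would give the sharper on-the-nose equality \(\rho^\ep(\sigma_1,\sigma_2\,;X,g,\sigma,\gamma,f)=\rho(\sigma_1,\sigma_2\,;Y,g_Y,\sigma,f)\); but the mod \(\Z\) statement already suffices for rationality.
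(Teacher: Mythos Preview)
Your proof is correct and follows essentially the same route as the paper's: pass from \((X,S,\gamma,f)\) to the \(K\)-cycle \((Y,S^+,f)\) with \(Y\) a Poincar\'e dual hypersurface of positive scalar curvature, apply Theorem~\ref{HRW-psc} to get \(\rho(\sigma_1,\sigma_2\,;Y,S^+,f)\in\mathbb{Q}\), and invoke Theorem~\ref{thm:RmodZ} to conclude. Your write-up is more explicit than the paper's three-line argument---you spell out the spinor identification along \(Y\), the well-definedness of \(\rho^\ep\) as a real number via Remark~\ref{discrete_asssumptions}(2), and the elementary step that rational mod~\(\Z\) implies rational---but the structure is the same.
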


\begin{proof} 
The odd \(K^\ep\)-cycle for \(B\pi\),  \((X,S,\gamma, f)\) determines an odd \(K\)-cycle for \(B\pi\),  \((Y,S^+,f)\) where $Y$
is a Poincar\'e dual submanifold for $\gamma$ having positive scalar curvature, and is given the induced spin structure from \(X\). 
By Theorem \ref{HRW-psc}, $\rho(\sigma_1, \sigma_2\,;Y,S^+,f)\in \QQ$. By  Theorem \ref{thm:RmodZ} it follows that 
\(\rho^\ep(\sigma_1, \sigma_2\,;X,S,\gamma, f) \in \QQ\) as claimed.
\end{proof}

Next is our even dimensional analog of Theorem \ref{HR-psc}, and is argued as above.

\begin{theorem}
Let \((X,S,\gamma, f)\) be an odd \(K^\ep\)-cycle for \(B\pi\), where \(X\) is an even dimensional spin manifold with a Riemannian metric of positive scalar curvature, \(S\) is the bundle of spinors on $X$ and $\gamma$ a primitive class in $H^1(X, \ZZ)$ such that there is a Poincar\'e dual submanifold $Y$
whose scalar curvature in the induced metric is positive. 
If the maximal Baum-Connes map for \(\pi\) is injective, then for any pair of unitary representations \(\sigma_1,\sigma_2:\pi\to U(N)\), the associated end-periodic rho invariant \(\rho^\ep(\sigma_1, \sigma_2\,;X,S,\gamma, f)\) is an integer.
\end{theorem}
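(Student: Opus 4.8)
The plan is to run exactly the reduction used for Theorem \ref{thm:ev}, with the invocation of Theorem \ref{HRW-psc} replaced by that of Theorem \ref{HR-psc}. First I would pass from the $K^\ep$-cycle $(X,S,\gamma,f)$ to the $K$-cycle $(Y,S^+,f)$ under the map $K_1^\ep(B\pi)\to K_1(B\pi)$ of Theorem \ref{isom}, taking $Y$ to be a Poincar\'e dual submanifold of $\gamma$ whose induced metric has positive scalar curvature (such a $Y$ exists by hypothesis) and equipping $Y$ with the spin structure induced from $X$ as in the construction preceding Theorem \ref{thm:bor-isom}.

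The point requiring care is that the resulting cycle $(Y,S^+,f)$ genuinely satisfies the hypotheses of Theorem \ref{HR-psc}: that $Y$ is an odd-dimensional spin manifold of positive scalar curvature, and that $S^+$, with its induced Clifford multiplication $c_Y(v)=c_X(\partial_t)c_X(v)$, is isomorphic as a Dirac bundle to the spinor bundle of $Y$ for the induced spin structure. The dimension and curvature statements are immediate; the identification of $S^+|_Y$ with the spinor bundle of $Y$ is the standard Clifford-algebraic fact that the restriction of the complex spinor representation of $\mathrm{Spin}(2k)$ to $\mathrm{Spin}(2k-1)$, cut down to the $+1$-eigenspace of the volume element and conjugated by Clifford multiplication by the normal, recovers the spinor representation of $\mathrm{Spin}(2k-1)$ --- this is exactly the compatibility already recorded in (b) of Definition \ref{boundary} and in the paragraph after Definition \ref{ep-isom}. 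Granting this, Theorem \ref{HR-psc} applies (the maximal Baum--Connes map for $\pi$ being injective) and yields $\rho(\sigma_1,\sigma_2\,;Y,S^+,f)\in\ZZ$.

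Finally I would invoke the commuting triangle of Theorem \ref{thm:RmodZ}: the mod $\ZZ$ reduction of $\rho^\ep(\sigma_1,\sigma_2\,;X,S,\gamma,f)$ agrees with that of $\rho(\sigma_1,\sigma_2\,;Y,S^+,f)$, which vanishes by the previous step. Since $X$ is spin with positive scalar curvature, Lichnerowicz' vanishing theorem (Remark \ref{discrete_asssumptions}(2)) guarantees the spectral sets of the twisted operators $D_1^+(X),D_2^+(X)$ are discrete, so $\rho^\ep$ is honestly defined as a real number by formula \eqref{E:etapm}; its vanishing mod $\ZZ$ therefore forces it to be an integer, which is the assertion.

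I expect the only non-routine point to be the bookkeeping identifying the induced Dirac bundle $S^+|_Y$ with the spin Dirac bundle of $Y$ for the induced spin structure --- so that the hypothesis ``$S$ is the bundle of spinors'' in Theorem \ref{HR-psc} is literally met --- together with checking that under $K_1^\ep(B\pi)\cong K_1(B\pi)$ the assignment $(X,S,\gamma,f)\mapsto (Y,S^+,f)$ sends the end-periodic positive-scalar-curvature cycle to precisely an odd-dimensional positive-scalar-curvature cycle of the type appearing in Theorem \ref{HR-psc}; everything else is a direct transcription of the proof of Theorem \ref{thm:ev}.
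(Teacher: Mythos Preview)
Your proposal is correct and follows essentially the same approach as the paper, whose proof consists of the single sentence ``As for Theorem \ref{thm:ev}.'' You have supplied the details the paper leaves implicit---in particular the identification of $S^+|_Y$ with the spinor bundle of $Y$ and the discreteness of the spectral sets via Remark \ref{discrete_asssumptions}(2)---but the reduction to $(Y,S^+,f)$ followed by invoking Theorem \ref{HR-psc} and Theorem \ref{thm:RmodZ} is exactly the intended argument.
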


\begin{proof} 
As for Theorem \ref{thm:ev}.
\end{proof}

Here is the even dimensional analog of Theorem \ref{HRK-psc}.

\begin{theorem}
Let \((X,S,\gamma, f)\) be an odd \(K^\ep\)-cycle for \(B\pi\), where \(X\) is an even dimensional spin manifold with a Riemannian metric of positive scalar curvature, \(S\) is the bundle of spinors on $X$ and $\gamma$ a primitive class in $H^1(X, \ZZ)$ such that there is a Poincar\'e dual submanifold $Y$
whose scalar curvature in the induced metric is positive. 
If the maximal Baum-Connes conjecture holds for \(\pi\), then for any pair of unitary representations \(\sigma_1,\sigma_2:\pi\to U(N)\), the associated end-periodic rho invariant \(\rho^\ep(\sigma_1, \sigma_2\,;X,S,\gamma, f)\) is zero.
\end{theorem}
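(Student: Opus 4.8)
The plan is to mirror the proof of Theorem \ref{thm:ev}, transporting the even-dimensional end-periodic assertion to the odd-dimensional theorem of Higson--Roe along the Poincar\'e dual hypersurface. First I would note that, under the isomorphism \(K_1^\ep(B\pi)\cong K_1(B\pi)\) of Theorem \ref{isom}, the \(K^\ep\)-cycle \((X,S,\gamma,f)\) is carried to the odd \(K\)-cycle \((Y,S^+,f)\), where \(Y\subset X\) is the given Poincar\'e dual submanifold, oriented and spin-structured as usual and equipped with its induced Riemannian metric, and \(S^+|_Y\) carries the Clifford multiplication \(c_Y(v)=c_X(\partial_t)c_X(v)\). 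The key point is that this cycle meets the hypotheses of Theorem \ref{HRK-psc}: \(Y\) is an odd-dimensional spin manifold, its induced metric has positive scalar curvature by assumption, and \(S^+|_Y\) with the above Clifford structure is canonically the spinor bundle of \(Y\) --- the standard restriction-of-spinors identification, which is exactly convention (b) of Definition \ref{boundary}. Hence Theorem \ref{HRK-psc}, applied under the assumption that the maximal Baum--Connes conjecture holds for \(\pi\), gives \(\rho(\sigma_1,\sigma_2\,;Y,S^+,f)=0\).

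It then remains to carry this vanishing back to \(X\). Theorem \ref{thm:RmodZ} already delivers \(\rho^\ep(\sigma_1,\sigma_2\,;X,S,\gamma,f)\equiv\rho(\sigma_1,\sigma_2\,;Y,S^+,f)=0\pmod{\ZZ}\), so \(\rho^\ep\) is an integer; to pin it to zero I would promote this to an honest equality of real numbers, exactly as in the proof of Lemma \ref{lem:ep-eq}. Since both the metric on \(X\) and the induced metric on \(Y\) are products near \(Y\), glue \(\mathbb{R}_{\geq0}\times Y\) to \(X_1=\bigcup_{k\geq0}W_k\) along \(Y\) (as in the proof of Theorem \ref{isom}) to obtain an end-periodic spin manifold \(Z_\infty\) with two ends. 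By Lemma 8.1 of \cite{MRS} --- which combines the Gromov--Lawson bending construction \cite{GrLaw3} with the Lichnerowicz vanishing of part (2) of Remark \ref{discrete_asssumptions} --- the twisted spin Dirac operators \(D_1^+(Z_\infty)\) and \(D_2^+(Z_\infty)\) are Fredholm with \(\Ind_\text{MRS}=0\), and the associated families on both model ends have discrete spectral set. Applying the MRS index theorem (Theorem \ref{thmC,MRS}) to \(D_1^+(Z_\infty)\) and \(D_2^+(Z_\infty)\) and subtracting, the index-form and transgression terms cancel because these are flat twists of the same Dirac operator (and all \(h\)-terms vanish by positive scalar curvature), leaving
\[
0=\rho^\ep(\sigma_1,\sigma_2\,;X,S,\gamma,f)-\rho(\sigma_1,\sigma_2\,;Y,S^+,f).
\]
Together with the previous paragraph this yields \(\rho^\ep(\sigma_1,\sigma_2\,;X,S,\gamma,f)=0\).

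The hard part is the second paragraph: upgrading the congruence mod \(\ZZ\) supplied by Theorem \ref{thm:RmodZ} to an \emph{exact} identity \(\rho^\ep(X)=\rho(Y)\). This is the only place the positive-scalar-curvature hypothesis does real work, and it enters twice --- Lichnerowicz to make \(\Ind_\text{MRS}\) well defined on the two-ended manifold, and Gromov--Lawson to extend the product PSC metric to a complete PSC metric on \(Z_\infty\) --- so the substantive inputs are Lemma 8.1 of \cite{MRS} and the restriction-of-spinors identification; the remaining manipulations of the MRS index formula are already performed in the proofs of Theorem \ref{thm:RmodZ} and Lemma \ref{lem:ep-eq}. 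One should additionally check that the hypersurface used to present \(\rho^\ep\) via its discrete-spectrum representative may be taken to be the same PSC hypersurface \(Y\); this follows from the independence statements in Theorem \ref{thm:RmodZ}, and for \(\dim X\le 7\) from the Schoen--Yau minimal-hypersurface theorem \cite{SY}.
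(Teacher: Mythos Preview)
Your proposal is correct and follows essentially the same route as the paper: reduce to \(\rho(\sigma_1,\sigma_2\,;Y,S^+,f)=0\) via Theorem~\ref{HRK-psc}, and then invoke the exact equality \(\rho^\ep(X)=\rho(Y)\) from Lemma~\ref{lem:ep-eq}. The only difference is cosmetic---the paper cites Lemma~\ref{lem:ep-eq} directly, whereas you reprove it inline (and your detour through the mod~\(\ZZ\) congruence of Theorem~\ref{thm:RmodZ} is then redundant); one small caveat is that your phrase ``the metric on \(X\) \ldots\ [is a] product near \(Y\)'' is not among the stated hypotheses, so strictly speaking a metric deformation near \(Y\) is needed before the two-ended gluing, a point glossed over in both arguments.
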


\begin{proof} 
The odd \(K^\ep\)-cycle for \(B\pi\),  \((X,S,\gamma, f)\) determines an odd \(K\)-cycle  \((Y,S^+,f)\) for \(B\pi\),  where $Y$
is a Poincar\'e dual submanifold for $\gamma$ having positive scalar curvature, and is endowed with the induced spin structure.
By Theorem \ref{HRK-psc}, $\rho( \sigma_1, \sigma_2\,;Y,S^+,f)=0$. By \ref{lem:ep-eq} it follows that 
\(\rho^\ep( \sigma_1, \sigma_2\,;X,S,\gamma, f) =0\).
\end{proof}

\begin{example}
Although $\rho$-invariants are difficult to compute, nevertheless thanks to many authors, there is now a decent set of computations that are available. We can use these to compute end-periodic rho invariants, which we will show in a simple example. Consider $Y=S^1$ with the trivial spin structure. Then unitary characters $\sigma_1, \sigma_2$ of the fundamental group of $S^1$ can be identified with real numbers, and a computation (cf. page 82, \cite{Gilkey}) says that the rho invariant of the spin Dirac operator is, $\rho(S^1, \sigma_1, \sigma_2) = \sigma_1-\sigma_2 \mod \ZZ$. 
In particular, $\rho(S^1, \sigma_1, \sigma_2) $ can take on any real value $\mod \ZZ$. 
Let $W$ be a spin cobordism from $S^1$ to $S^1$, and $\Sigma$ be the compact spin Riemann surface (whose genus is $\ge1$) obtained as a result of gluing the two boundary components of $W$. Then $S^1$ is a codimension one submanifold of $\Sigma$ that represents a generator $a$ of $\pi_1(\Sigma)$. We can extend the characters $\sigma_1, \sigma_2$ of $a\ZZ$ to all of $\pi_1(\Sigma)$ by declaring them to be trivial on the other generators. Then by Theorem \ref{thm:RmodZ}, it follows that \(\rho^\ep(\Sigma,  \gamma,\sigma_1, \sigma_2)=\sigma_1-\sigma_2 \mod \ZZ\), can take on any real value $\mod \ZZ$, where $\gamma$ is the degree one cohomology class on $\Sigma$ which is Poincar\'e dual to $S^1$. We conclude by Theorem  \ref{thm:ev}  that the Riemann surface $\Sigma$ does not admit a PSC metric. This of course can also be proved by the Gauss-Bonnet theorem and is well known. 

The construction generalises easily to any odd dimensional spin manifold $Y$ with non-zero rho invariant $\rho(Y, \sigma_1, \sigma_2)\ne 0
\mod \ZZ$. We conclude by Theorem \ref{thm:RmodZ} that the resulting even dimensional spin manifold $X$  constructed from a spin cobordism from $Y$ to itself, has non-zero end-periodic rho invariant \(\rho^\ep(X,  \gamma,\sigma_1, \sigma_2)\ne 0 \mod \ZZ\) where $\gamma$ is the degree one cohomology class on $X$ which is Poincar\'e dual to the submanifold $Y$. In particular, such an $X$ does not admit a PSC metric. Examples of $Y$ include odd-dimensional lens spaces $L(p; \vec{q})$, where it is shown in Theorem 2.5, part (c)
\cite{Gilkey84}, that for any spin structure on $L(p; \vec{q})$, there is a representation $\sigma$ of $\pi_1(L(p; \vec{q}))$ such that  $\rho(L(p; \vec{q}), {\rm Id}, \sigma)\ne 0
\in \QQ/\ZZ$. Explicitly, for 3 dimensional lens spaces $L(p, q)$, consider the representation $\sigma : \pi_1(L(p; \vec{q})) \longrightarrow U(1)$ taking the generator $t \in \pi_1(L(p; q))$
to the unit complex number $\exp(2\pi \sqrt{-1}/p)$. Then    $\rho(L(p; q), {\rm Id}, \sigma) = -\left(\frac{d}{2p}\right) (p+1) \ne 0 \in \QQ/\ZZ$ where $d$ is a certain integer relatively prime to $48p$. Then \(\rho^\ep(X,  \gamma,{\rm Id}, \sigma)\ne 0 \in \QQ/ \ZZ\). These results confirm Theorem \ref{thm:ev} in these examples.
\end{example}

\subsection{Size of the space of components of positive scalar curvature metrics}

Hitchin \cite{Hi74} proved the first results on the size of the space of components of the space
of Riemannian metrics of positive scalar curvature metrics on a compact spin manifold, when non-empty.
This sparked much interest in the topic and results by Botvinnik-Gilkey, Piazza-Schick and many others.

We now extend Theorem \ref{thm:BG} to the even dimensional case. We would like to say something like `Given an even-dimensional manifold \(X\) with PSC having a submanifold \(Y\) of PSC Poincar{\'e} dual to a primitive one-form \(\gamma\), if \(\fM^+(Y)\) has infinitely many path components then so does \(\fM^+(X)\).' The argument would involve using a countable family of PSC metrics on \(Y\) with distinct rho invariants to find a countable such family on \(X\). There are complications however, since given an arbitrary PSC metric on \(Y\), there is not necessarily a PSC metric on \(X\) whose restriction to \(Y\) is the given metric. Because we are already assuming that there is at least one PSC metric on \(X\)  which restricts to a metric of PSC on \(Y\), there are no obstructions from topology preventing this from being the case.

\begin{definition}\label{def:psc-ad}
Let \(X\) be a compact even dimensional manifold, and \(\gamma\in H^1(X,\mathbb{Z})\) a primitive cohomology class with accompanying Poincar{\'e} dual submanifold \(Y\). Suppose that there is at least one PSC metric on \(X\) which restricts to a PSC metric on \(Y\). We say that \(X\) is \emph{psc-adaptable with respect to \(Y\)} if for \emph{every} PSC metric \(g_Y\) on \(Y\), there is a PSC metric \(g_X\) on \(X\) that is a product metric \(dt^2+g_Y\) in a tubular neighbourhood of \(Y\).
\end{definition}

\begin{figure}[h]
\includegraphics[height=1in]{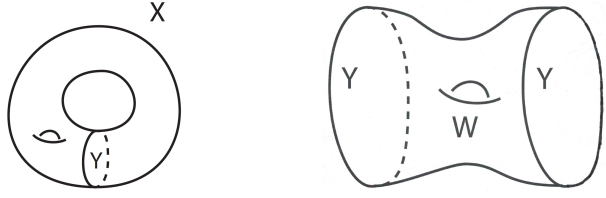}\qquad \qquad
\includegraphics[height=1in]{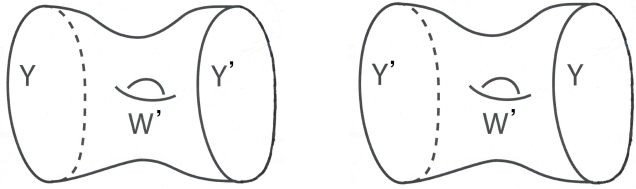}
\caption{explaining psc-adaptable}
\label{fig:pscadaptable}
\end{figure}

Some notes and comments on the notion of psc-adaptability. Let \(X\) and \(Y\) be as in the above definition, and take an arbitrary PSC metric \(g_Y\) on \(Y\). Cutting \(X\) open along \(Y\), we obtain a self cobordism \(W\) of \(Y\); see Figure \ref{fig:pscadaptable}. Under suitable assumptions on the topology of \(X\) and \(Y\), a construction of Miyazaki \cite{Miyazaki} and Rosenberg \cite{Ros} (using the theory of Gromov-Lawson \cite{GrLaw2} and Schoen-Yau \cite{SY}) enables one to {\em push} the 
psc metric on $Y$ across the bordism (pictured on the right in the figure) to get a PSC metric on $W$ restricting to metrics of PSC on each boundary component. One might then try to glue the manifold back together to obtain a PSC metric on \(X\) which restricts to the given metric \(g_Y\) on \(Y\).
The problem is that one doesn't know whether the  new psc metric on $Y$ is isotopic to the original.
Hence the concept of  psc-adaptability which hypothesizes that this is true. It is the case when the bordism is {\em symmetric} for instance. That is, starting with a bordism $W'$ from $Y$ to $Y'$, we get a bordism from $Y$ to itself by thinking of $W'$ as a bordism from $Y'$ to $Y$  and gluing to the original bordism, see Figure \ref{fig:pscadaptable2}.

\begin{figure}[h]
\includegraphics[height=1in]{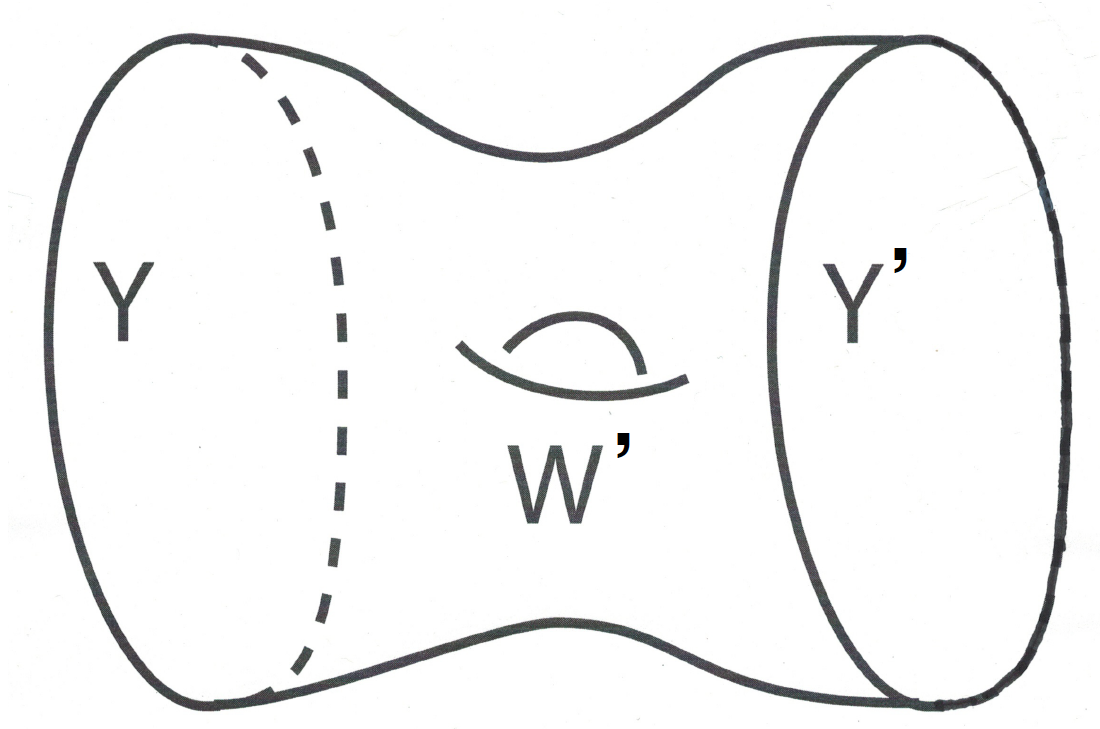}\qquad
\includegraphics[height=1in]{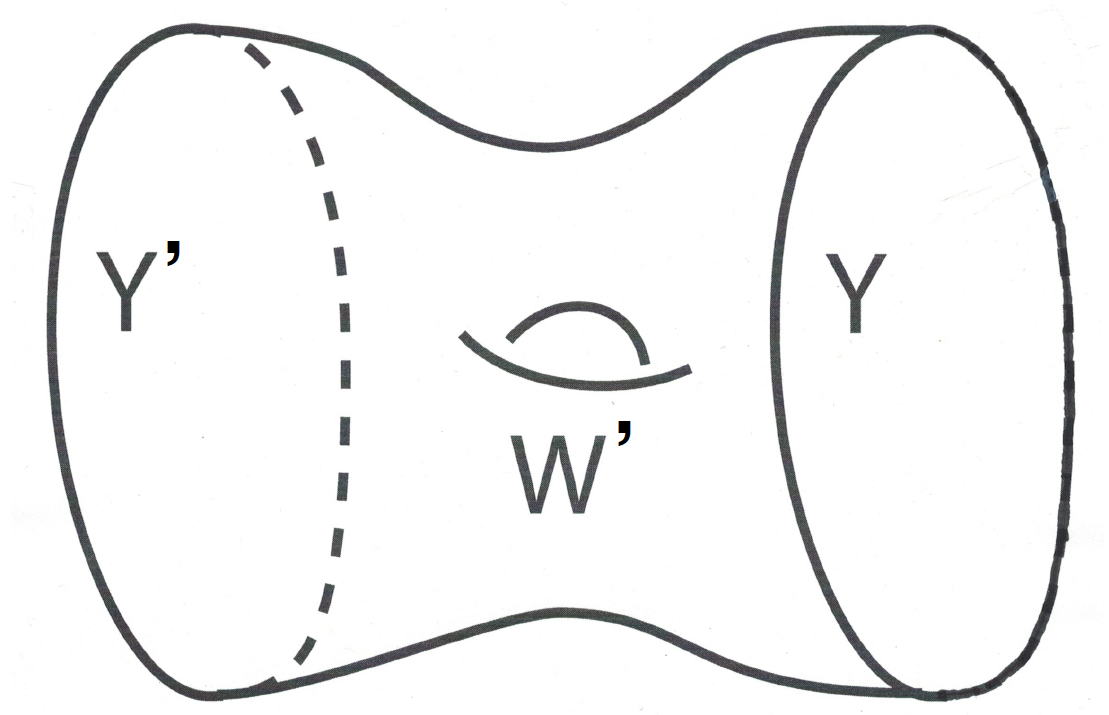}
\caption{explaining psc-adaptable}
\label{fig:pscadaptable2}
\end{figure}

Then one can use the Miyazaki-Rosenberg construction starting with the PSC metric $Y$ to get another another PSC metric on $Y'$ halfway through, and then reverse the Miyazaki-Rosenberg construction from the PSC metric on the halfway $Y'$ to get a PSC metric on $Y$ on the other end. In this case, we end up with the original PSC metric on $Y$. Since the metrics agree on either end, the bordisms can be glued together.

Mrowka, Ruberman and Saveliev also note a class of psc-adaptable manifolds -- those of the form \((S^1\times Y)\#M\) where \(Y\) and \(M\) are manifolds of positive scalar curvature, see \cite{MRS} Theorem 9.2. The end-periodic bordism groups provide a more natural framework for their proof of the following:

\begin{theorem}[Theorem 9.2, \cite{MRS}] Let \(X\) be a compact even-dimensional spin manifold of dimension \(\geq6\) admitting a metric of positive scalar curvature. Suppose there is a submanifold \(Y\subset X\) of PSC that is Poincar{\'e} dual to a primitive cohomology class \(\gamma\in H^1(X,\mathbb{Z})\), such that \(\pi=\pi_1(Y)\) is finite and non-trivial. Further assume that the classifying map \(f:Y\to B\pi\) of the universal cover extends to \(X\), and that \(X\) is psc-adaptable with respect to \(Y\). If \(r_m(\pi_1(Y))>0\), then
$\pi_0(\fM^+(X))$ is infinite, where $\fM^+(X)$
denotes the quotient of the space of positive scalar curvature metrics by the diffeomorphism group.
\end{theorem}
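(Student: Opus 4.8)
The plan is to transfer Botvinnik--Gilkey's Theorem \ref{thm:BG} from \(Y\) to \(X\), using psc-adaptability together with the isomorphism \(\Omega_m^{\spin,+}(B\pi)\cong\Omega_m^{\eps,+}(B\pi)\) of Theorem \ref{thm:psc-isom} and the identity of Lemma \ref{lem:ep-eq}. Write \(\pi=\pi_1(Y)\); since \(\dim X=m+1\geq 6\) and \(m\) is odd, \(m\geq 5\), so \(Y\) satisfies the hypotheses of Theorem \ref{thm:BG}. First I would extract from the proof of that theorem (that is, from \cite{BG}) a \emph{fixed} pair of unitary representations \(\sigma_1,\sigma_2:\pi\to U(N)\) and a sequence \(\{g_i\}_{i\in\NA}\) of positive scalar curvature metrics on \(Y\) for which the real numbers \(\rho(\sigma_1,\sigma_2\,;Y,g_i,\sigma,f)\) are pairwise distinct; producing such a family is precisely what the hypothesis \(r_m(\pi)>0\) buys in \cite{BG}. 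Since the classifying map \(f:Y\to B\pi\) extends over \(X\), the flat bundles attached to \(\sigma_1,\sigma_2\) extend over \(X\) as well.

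Because \(X\) is psc-adaptable with respect to \(Y\), for each \(i\) I then choose a positive scalar curvature metric \(\bar g_i\) on \(X\) that is a product \(dt^2+g_i\) in a tubular neighbourhood of \(Y\). Then \((X,\bar g_i,\sigma,\gamma,f)\) is an \(\Omega_m^{\eps,+}\)-cycle which the map \(\Omega_m^{\eps,+}(B\pi)\to\Omega_m^{\spin,+}(B\pi)\) sends to \((Y,g_i,\sigma,f)\), so by Lemma \ref{lem:ep-eq},
\[
\rho^\ep(\sigma_1,\sigma_2\,;X,\bar g_i,\sigma,\gamma,f)=\rho(\sigma_1,\sigma_2\,;Y,g_i,\sigma,f),
\]
and hence the end-periodic rho invariants of the \(\bar g_i\) form an infinite subset of \(\RE\). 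It will then suffice to show that whenever two of the \(\bar g_i\) lie in the same path component of \(\fM^+(X)\), their end-periodic rho invariants differ by an element of a finite set depending only on \(\pi\) and \(\sigma_1,\sigma_2\); infinitely many distinct values cannot then be spread over finitely many components of \(\fM^+(X)\), and the theorem follows.

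To establish that reduction I would prove two facts. \emph{Isotopy invariance.} For a positive scalar curvature metric \(h\) on \(X\) the twisted spin Dirac operators have trivial kernel by Lichnerowicz, so by Remark \ref{discrete_asssumptions}(2) together with Theorem \ref{Taubes-fred2} and Lemma \ref{Taubes-fred1} the associated spectral sets are discrete and disjoint from \(\{|z|=1\}\); the real number \(\rho^\ep(\sigma_1,\sigma_2\,;X,h,\sigma,\gamma,f)\), given by the difference of the twisted end-periodic eta invariants of Definition \ref{def:ep-eta}, is therefore defined for every such \(h\). Along a path \(h^t\), \(t\in[0,1]\), of positive scalar curvature metrics the spectral gap is uniform by compactness, so \(t\mapsto\rho^\ep(\sigma_1,\sigma_2\,;X,h^t,\sigma,\gamma,f)\) is continuous; its reduction modulo \(\ZA\) is the value of the homomorphism \(\rho^\ep(\sigma_1,\sigma_2):K_1^\ep(B\pi)\to\RE/\ZA\) of Theorem \ref{thm:RmodZ} on the class of \((X,S,\gamma,f)\), which is independent of the metric, and a continuous function on an interval taking values in a single coset of \(\ZA\) is constant. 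Hence \(\rho^\ep\) is constant on path components of the space of positive scalar curvature metrics on \(X\). \emph{Diffeomorphism action.} For \(\phi\in\mathrm{Diff}(X)\), transporting the cycle by the isometry \(\phi:(X,\phi^*h)\to(X,h)\) gives \(\rho^\ep(\sigma_1,\sigma_2\,;X,\phi^*h,\sigma,\gamma,f)=\rho^\ep(\sigma_1,\sigma_2\,;X,h,\phi_*\sigma,\phi_*\gamma,f\circ\phi^{-1})\); its reduction modulo \(\ZA\) is unaffected, because under \(K_1^\ep(B\pi)\cong K_1(B\pi)\), and using independence of the choice of Poincar\'e dual submanifold, the class of \((X,S,\phi_*\gamma,f\circ\phi^{-1})\) is carried to that of \((X,S,\gamma,f)\), while the twisting data \((\sigma,f)\) is moved only through the automorphism of \(\pi\) induced by \(\phi\), hence only through the finite group \(\mathrm{Out}(\pi)\) and the finite set of spin structures on \(X\). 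Combining these: if \(\bar g_i\) and \(\bar g_j\) lie in one component of \(\fM^+(X)\) then \(\bar g_i\) is isotopic through positive scalar curvature metrics to \(\phi^*\bar g_j\) for some \(\phi\), so \(\rho^\ep(\sigma_1,\sigma_2\,;X,\bar g_i,\sigma,\gamma,f)\) lies in the finite set of values obtained from \(\rho^\ep(\sigma_1,\sigma_2\,;X,\bar g_j,\sigma,\gamma,f)\) by the action of \(\mathrm{Out}(\pi)\) and of the spin structures. The pigeonhole argument of \cite{BG} then produces infinitely many path components of \(\fM^+(X)\).

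The main obstacle I expect is the diffeomorphism bookkeeping in the second step: unlike in \cite{BG}, where \(\pi_1\) is finite and hence \(H^1\) is torsion, here \(\pi_1(X)\) can be infinite and \(\mathrm{Diff}(X)\) can act nontrivially on \(H^1(X;\ZA)\), so a diffeomorphism need not fix \(\gamma\). What makes the argument go through is that the reduction modulo \(\ZA\) of \(\rho^\ep\) is a homotopy invariant, which pins down the coset; the remaining point---that the orbit of real values under \(\mathrm{Diff}(X)\) is finite---is exactly what is controlled in the proof of this theorem in \cite{MRS}, again via finiteness of \(\mathrm{Out}(\pi_1(Y))\) and of the set of spin structures. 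Everything else is the routine transport of Botvinnik--Gilkey's metrics through psc-adaptability and Lemma \ref{lem:ep-eq}.
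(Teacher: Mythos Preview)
Your core strategy matches the paper's exactly: pull the Botvinnik--Gilkey family \(\{g_i\}\) on \(Y\) up to \(X\) via psc-adaptability, identify the resulting end-periodic rho invariants with the odd-dimensional ones via Lemma \ref{lem:ep-eq} (the paper packages this as Theorem \ref{thm:Rmaps}), and then appeal to invariance of \(\rho^\ep\) along PSC paths. The paper's proof is terser on the last point: it simply cites Theorem 9.1 of \cite{MRS} for the statement that homotopic PSC metrics on \(X\) have equal end-periodic rho, whereas you reproduce that argument via continuity of \(t\mapsto\rho^\ep(h^t)\) together with constancy of its mod \(\ZA\) reduction coming from Theorem \ref{thm:RmodZ}. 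That part of your argument is fine and is essentially what underlies the cited result.

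Where you go beyond the paper is in the diffeomorphism bookkeeping needed to pass from the space of PSC metrics to its quotient \(\fM^+(X)\); the paper's own proof stops at ``homotopic PSC metrics have the same rho'' and does not explicitly treat the \(\mathrm{Diff}(X)\) action, deferring this to \cite{MRS}. Your treatment of that step is the shakiest part of the proposal. The assertion that the mod \(\ZA\) reduction is unaffected by an arbitrary \(\phi\in\mathrm{Diff}(X)\) is not justified: there is no continuity to invoke, \(\phi\) may move \(\gamma\) to a different primitive class, and the \(K^\ep\)-homology classes of \((X,S,\gamma,f)\) and \((X,S,\phi_*\gamma,f\circ\phi^{-1})\) need not coincide. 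Moreover \(\phi\) acts on \(\pi_1(X)\), not directly on \(\pi=\pi_1(Y)\), so the reduction to \(\mathrm{Out}(\pi)\) requires an argument you have not given. That said, the paper supplies no more detail here than you do, so your proposal is at least as complete as the paper's own proof; you have simply been more explicit about where the residual difficulty lies.
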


\begin{proof}
In the terminology of Section \ref{end-per-B}, we have an \(\Omega_m^{\eps,+}(B\pi)\)-cycle \((X,g,\sigma,\gamma,f)\), with associated \(\Omega_m^{\spin,+}(B\pi)\)-cycle \((Y,g,\sigma,f)\). Botvinnik and Gilkey \cite{BG} construct a representation \(\alpha:\pi\to U(N)\) of \(\pi\) and a countable family of metrics \(g_i\) on \(Y\) with \[\rho(\alpha,1\,;Y,g_i,\sigma,f)\neq\rho(\alpha,1\,;Y,g_j,\sigma,f)\] for \(i\neq j\), where \(1:\pi\to U(N)\) is the trivial representation. Our assumption of psc-adaptability and Theorem \ref{thm:Rmaps} imply there is an countable family of metrics \(g_i\) on \(X\) with \[\rho^\ep(\alpha,1\,;X,g_i,\sigma,\gamma,f)\neq\rho(\alpha,1\,;X,g_j,\sigma,\gamma,f)\] for \(i\neq j\). But Theorem 9.1 of MRS \cite{MRS} says that homotopic metrics of PSC on \(X\) should have the same rho invariants.
\end{proof}




\section{Vanishing of end-periodic rho using the representation variety}\label{rep-var}

In this section we give a  proof of the vanishing of the end-periodic rho invariant of the twisted Dirac operator with coefficients in a flat Hermitian vector bundle on a compact even dimensional Riemannian spin manifold $X$ of positive scalar curvature using the representation variety of 
$ \pi_1(X)$ instead. 

Let $\iota\colon Y \hookrightarrow X$ be a codimension one submanifold of $X$ which is Poincar\'e dual to a generator $\gamma \in H^1(X, \ZZ)$. 

Let $\fR={\rm Hom} (\pi, U(N))$ denote the representation variety of $\pi=\pi_1(Y)$, and 
$\tilde \fR$ denote the representation variety of $\pi_1(X)$. 
We now construct a generalization of the Poincar\'e vector bundle $\cP$ over $B\pi\times \fR$. Let $E\pi \to B\pi$ be a principal $\pi$-bundle over the space $B\pi$ with contractible total space $E\pi$. Let $h \colon Y	\to B\pi$ be a continuous map classifying the universal $\pi$-covering of $Y$. We construct a tautological rank $N$ Hermitian vector bundle 
$\cP$ over $B\pi \times \fR$ as follows: consider the action of $\pi$ on $E\pi \times \fR \times \C^N$ 
given by
\begin{align*}
E\pi \times \fR  \times \C^N \times \pi &\longrightarrow E\pi \times \fR  \times \C^N\\
((q,\sigma, v), \tau) & \longrightarrow (q\tau ,\sigma, \sigma(\tau^{-1})v).
\end{align*}
Define the universal rank $N$ Hermitian vector bundle $\cP$ over $B\pi \times \fR$ to be the quotient $(E\pi \times \fR  \times \C^N)/\pi$. Then $\cP$ has the property that the restriction $\cP\big|_{B\pi \times \{\sigma\}}$ is the flat Hermitian vector bundle over $B\pi$ defined by $\sigma$. Let $I$ denote the closed unit interval $[0,1]$ 
and $\beta: I \to \fR$ be a smooth path in $\fR$ joining the unitary representation $\alpha$ to the trivial representation. 
Define $E = (f \times \beta)^*\cP \to Y \times I$ to be the Hermitian vector bundle over $Y \times I$, where $f\colon Y\to B\pi$
is the classifying map of the universal cover of $Y$. 
Let $E_t \to Y\times \{t\}$ denote the restriction of $E$ to $Y\times\{t\}$. Then $E_t$ is the flat unitary Hermitian vector bundle  
over $Y$ determined by the unitary representation $\beta(t)$ of $\pi$. Thus $E$ has a natural flat unitary connection, whose restriction
on each $E_t, \, t\in I$ is the flat unitary connection, which can be extended to a full $U(n)$-connection $\nabla^E$ on $Y\times I$, which 
amounts to giving an action of $\partial/\partial t$, or equivalently of identifying $E$ with a bundle pulled back from $Y$.
With such a choice of connection, it follows that the curvature of $E$ is a multiple of $dt$,  and so the only non-zero component of 
the Chern character form $\ch(\nabla^E)-N$ is the first Chern form $\alpha_t \wedge dt$ in dimension $2$, where $\alpha_t$ is a closed 1-form 
on $Y$, whose cohomology class $\alpha=[\alpha_t] \in H^1(Y, \R)=H^1(B\pi, \R)$ is independent of $t\in I$.


\begin{theorem}[PSC and vanishing of end-periodic rho]	
Let $(X, g)$ be a compact spin manifold of even dimension, and
let $\iota\colon Y \hookrightarrow X$ be a codimension one submanifold of $X$ which is Poincar\'e dual to a primitive class 
$\gamma \in H^1(X, \ZZ)$.
Suppose that 
\begin{enumerate}
\item  $g$ is a Riemannian metric of positive scalar curvature;
\item the restriction $g\big|_Y$ is also a metric of positive scalar curvature.
\end{enumerate}

Let  $\tilde\alpha\colon \tilde\pi \to U(N)$ be a unitary representation of $\tilde\pi=\pi_1(X)$, and $\alpha\colon \pi \to U(N)$ be the 
unitary representation of $\pi=\pi_1(Y)$ defined by $\tilde\alpha\circ \iota_*$.
Assume that $\alpha$ can be connected by a smooth path $\beta : I \to \fR$ to the trivial  representation in
the representation space $\fR$.

Then  $\rho^\ep(X,S, \gamma, g;\tilde\alpha,1) =0$, 
where the flat hermitian bundle $E_{\tilde\alpha}$ is determined by $\tilde\alpha$.
\end{theorem}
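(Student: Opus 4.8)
The plan is to reduce the end-periodic assertion to the vanishing of the ordinary rho invariant of the hypersurface $Y$, and then to prove that vanishing by feeding the interpolating bundle $E\to Y\times I$ constructed above into the Atiyah--Patodi--Singer index theorem. First I would appeal to Lemma~\ref{lem:ep-eq}. After a preliminary deformation of $g$ in a tubular neighbourhood of $Y$ making it a product $dt^2+g|_Y$ there (possible because $g|_Y$ is a metric of positive scalar curvature), the quintuple $(X,g,\sigma,\gamma,f)$ becomes an $\Omega^{\eps,+}$-type cycle whose associated hypersurface cycle is $(Y,g|_Y,\sigma,f)$, with the induced spin structure and with classifying map $f\colon Y\to B\pi$, $\pi=\pi_1(Y)$. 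Lemma~\ref{lem:ep-eq}, which rests only on the MRS index theorem and on the Gromov--Lawson Fredholmness of the relevant operators on the two-ended manifold obtained by joining $\R_{\geq0}\times Y$ to $X_1$, then gives
\[
\rho^\ep(X,S,\gamma,g;\tilde\alpha,1)=\rho(Y,S^+,g|_Y;\alpha,1)=\eta(D_{E_\alpha})-\eta(D_{\underline{\C}^N}),
\]
where for a flat Hermitian bundle $F\to Y$ we write $D_F$ for the spin Dirac operator of $Y$ twisted by $F$; note that this reduction uses no $K$-homology.

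To compute $\eta(D_{E_\alpha})-\eta(D_{\underline{\C}^N})$ I would apply the APS index theorem to the spin Dirac operator $D_E^+$ twisted by $E$ on the even-dimensional manifold $Y\times I$, after choosing $\beta$ to be constant near the endpoints of $I$ so that $E$, and hence $D_E^+$, is of product type near $\partial(Y\times I)$. Because $g|_Y$ has positive scalar curvature and each $E_{\beta(t)}$ is \emph{flat}, the Lichnerowicz formula forces $\ker D_{E_{\beta(t)}}=0$ for every $t$, so all $h$-terms in the index formula disappear and the APS theorem reads (up to an overall sign that is immaterial here)
\[
\rho(Y,S^+,g|_Y;\alpha,1)=\Ind_\text{APS}(D_E^+,Y\times I)-\int_{Y\times I}\hat{A}(Y\times I)\wedge\ch(\nabla^E).
\]
The vanishing of the kernels of the slice operators also means the family $\{D_{E_{\beta(t)}}\}_{t\in I}$ has no spectral flow, and since $D_E^+$ on the cylinder $Y\times I$ is the suspension of this family, the standard identification of the cylinder index with the spectral flow yields $\Ind_\text{APS}(D_E^+,Y\times I)=0$. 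On the other hand, using $\ch(\nabla^E)-N=\alpha_t\wedge dt$ from the construction and the fact that $[\alpha_t]=\alpha$ is independent of $t$, integration over the $I$-factor gives $\int_{Y\times I}\hat{A}(Y\times I)\wedge\ch(\nabla^E)=\int_{Y\times I}\hat{A}(Y)\wedge\alpha_t\wedge dt=\langle\hat{A}(Y)\cup\alpha,[Y]\rangle$. Hence $\rho(Y,S^+,g|_Y;\alpha,1)=-\langle\hat{A}(Y)\cup\alpha,[Y]\rangle$ up to sign.

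It remains to show $\langle\hat{A}(Y)\cup\alpha,[Y]\rangle=0$. Since $\alpha$ has degree $1$, the components of $\hat{A}(Y)$ lie in degrees divisible by $4$, and $\dim Y$ is odd, this pairing is automatically zero unless $\dim Y\equiv1\pmod4$, so assume $\dim Y\equiv1\pmod4$. The assignment $\alpha'\mapsto\langle\hat{A}(Y)\cup\alpha',[Y]\rangle$ is an $\R$-linear functional on $H^1(Y;\R)$, so it suffices to show it vanishes on the lattice $H^1(Y;\Z)$. Given an integral class $\alpha'$, form the line bundle $L\to Y\times S^1$ with $c_1(L)=\alpha'\times[S^1]^\vee$; since all higher Chern forms of a line bundle vanish and $c_1(L)^2=0$, the Atiyah--Singer index theorem gives $\Ind(D_L^+,Y\times S^1)=\langle\hat{A}(Y)\cup\alpha',[Y]\rangle$. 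Choosing $S^1$ with a large circumference makes the curvature of $L$ uniformly small, and then the twisted Lichnerowicz estimate on the positive-scalar-curvature manifold $Y\times S^1$ forces $D_L^+$ to be invertible, whence $\Ind(D_L^+,Y\times S^1)=0$. (Alternatively, represent $\alpha'$ by a minimal hypersurface of $Y$, which carries a metric of positive scalar curvature by Schoen--Yau, so that $\langle\hat{A}(Y)\cup\alpha',[Y]\rangle$ is the $\hat{A}$-genus of that hypersurface and therefore vanishes by Lichnerowicz.) Putting the three steps together, $\rho^\ep(X,S,\gamma,g;\tilde\alpha,1)=0$.

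I expect the main obstacle to be this last step: the class $\alpha=[\alpha_t]$ produced by the path $\beta$ need not be rational, so a priori $\langle\hat{A}(Y)\cup\alpha,[Y]\rangle$ is just some real number, and showing it vanishes is exactly where positive scalar curvature is used in an essential way beyond the spectral-flow input --- one needs that codimension-one homology classes in PSC manifolds have PSC representatives, together with the linearity reduction to integral coefficients. Two subsidiary points also need attention but are routine: verifying that the given metric can be deformed to be a product near $Y$ while remaining PSC (so that Lemma~\ref{lem:ep-eq} applies as stated), and tracking the signs and APS boundary conditions in the identification of the cylinder index with the spectral flow.
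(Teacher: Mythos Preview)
Your proof is correct but follows a genuinely different path from the paper's.

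The paper never reduces to $Y$ via Lemma~\ref{lem:ep-eq}. Instead it builds a two-ended end-periodic manifold $Z_\infty$ directly: it takes the compact piece $Z=Y\times I$, attaches to one boundary copy of $Y$ the periodic end $X_1=\bigcup_{k\ge 0}W_k$ carrying the flat bundle $p^*E_{\tilde\alpha}$, and to the other boundary copy of $Y$ the periodic end $X_0=\bigcup_{k<0}W_k$ carrying the trivial bundle, using the interpolating bundle $E$ over $Y\times I$ to glue them. It then applies the MRS index theorem (Theorem~\ref{thmC,MRS}) to $D^+_{\tilde E}(Z_\infty)$; positive scalar curvature on $X$ and on $Y$ forces the index to vanish (Lemma~8.1 of \cite{MRS}), the two pairs of transgression terms cancel against each other, and the two end-periodic eta terms combine to give $\rho^\ep$. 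The remaining integral $\int_{Y\times I}\hat A(Y)\ch(E)=\int_Y\hat A(Y)\wedge\alpha$ is disposed of by a direct citation of Theorem~2.1 of Gromov--Lawson \cite{GrLaw}.

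By contrast you first pass to $\rho(Y)$ and then run the whole computation in the classical APS framework on the compact cylinder, reproving the Gromov--Lawson vanishing from scratch via your $Y\times S^1$ line-bundle argument. Your route is more modular and makes transparent that the content is really odd-dimensional, but it costs you the preliminary product-metric deformation near $Y$ (which you should justify by invoking the PSC-homotopy invariance of $\rho^\ep$, Theorem~9.1 of \cite{MRS}, not just the existence of the deformation). The paper's route avoids that deformation entirely---the MRS transgression terms absorb the non-product behaviour and then cancel---and stays within the end-periodic formalism. Your Schoen--Yau alternative in the last step carries a dimension restriction, so it is the $Y\times S^1$ argument (which is essentially Gromov--Lawson's own method) that actually does the work in general.
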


\begin{proof} As observed above, the unitary connection $\nabla^E$ induced on $E$ has curvature which is a multiple of 
$dt$, so that the Chern character form $\ch(\nabla^E) = N + \alpha_t \wedge dt$, where $\alpha_t \wedge dt$ is the first Chern form of 
the connection on $E$ and 
$t$ is the variable on the interval $I$. It follows that $\ch(E) = N + \alpha\wedge dt$	where $\alpha\in H^1(Y, \R)$
is the cohomology class of $\alpha_t$.
Consider the integrand 
$\int_{Y\times I} \widehat A(Y\times I)\ch(E)$. 
Since $\widehat A(Y \times I) = \widehat A(Y)$, where $\widehat A(Y)$ is the 
A-hat characteristic class of $Y$. 
From the
discussion above
$$ \int_{Y\times I} \widehat A(Y)\ch(E)=  \int_Y \widehat A(Y) \alpha \int_I dt.$$
Since $(Y,g)$ is a spin Riemannian manifold of positive scalar curvature, it follows from Theorem 2.1 in Gromov-Lawson \cite{GrLaw} that 
$\displaystyle \int_Y \widehat A(Y)f^*(x) = 0$
for all $x\in H^1(B\pi,\R)=H^1(Y, \R)$.  

Therefore we conclude that $ \displaystyle\int_{Y\times I} \widehat A(Y)\ch(E)=0$.

Consider the manifold $Y\times I$. It can be made into an end-periodic manifold with two ends as follows. Let $W$ be the fundamental segment obtained by cutting $X$ open along $Y$, and $W_k$ be isometric copies of $W$. Then we can attach $X_1=\cup_{k\ge 0} W_k$
to one boundary component of $Y\times I$ and $X_0=\cup_{k < 0} W_k$ to the other boundary component . 
Call the resulting end-periodic manifold 
$Z_\infty$ (see the Figure \ref{fig:2ep}). It is clear that $Z_\infty$ is diffeomorphic to $\tilde X$, the cyclic Galois cover of $X$ corresponding to $\gamma$. Let $f_0 = -f$ and $f_1 = f$ for a choice of real-valued function $f$ on $Z_\infty$ such that $\gamma=[df]$.

\begin{figure}[h]
\includegraphics[height=1in]{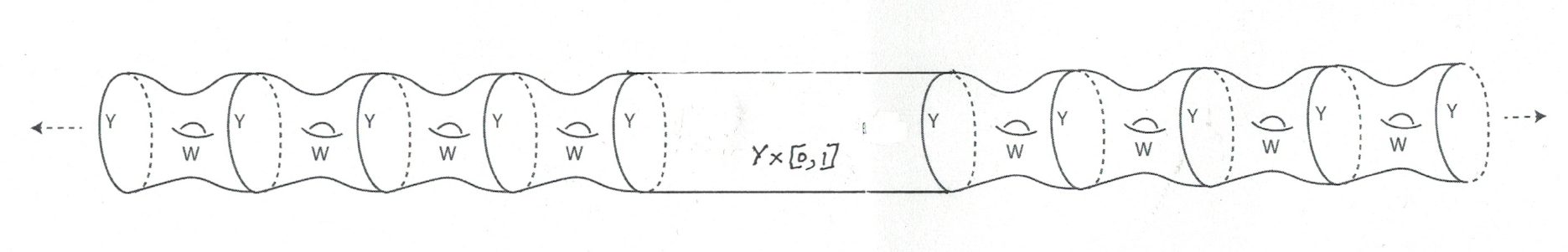}\\
\caption{End-periodic manifold with 2 ends}
\label{fig:2ep}
\end{figure}

The flat hermitian bundle $E_{\tilde\alpha}$ over $X$ induces a flat hermitian bundle $p^*(E_{\tilde\alpha})$ over $\tilde X$, where 
$p\colon \tilde X \to X$ is the projection.  The restriction of $p^*(E_{\tilde\alpha})$ to the subset $X_1$ is denoted by $E_1$. Let $E_0$ 
denote the trivial bundle over $X_0$. We use the smooth path $\gamma$ to define the bundle $E$ over $Y\times I$ which has the
property that the restriction of $\tilde E$ to the boundary components agree with $E_0$ and $E_1$, thereby defining a global vector bundle 
$\tilde E$ over $Z_\infty$.

 We can apply Theorem C in \cite{MRS} to see that
\begin{align*}
{\rm index}(D^+_{\tilde E}(Z_\infty)) &= \int_{Y\times I} \widehat A(Y\times I)\ch(E) - \int_Y \omega + \int_X df\wedge \omega 
- \frac{1}{2}(h_1 + \eta_{ep}(X, E_{\tilde\alpha}, \gamma, g)\\
& + \int_Y \omega - \int_X df\wedge \omega 
- \frac{1}{2}(h_0 - \eta_{ep}(X, E_{id}, \gamma, g)
\end{align*}
Since $g$ and $g\big|_Y$ are metrics of positive scalar curvature by hypothesis, it follows that ${\rm index}(D^+_E(Z_\infty)) =0$ 
by Lemma 8.1 in \cite{MRS} and 
that $\displaystyle\int_{Y\times I} \widehat A(Y\times I)\ch(E)=0$ by the earlier argument. Therefore $\rho^{ep}(X,S, \gamma, g;\tilde\alpha,1) =0$ as claimed.

\end{proof}


\end{document}